\numberwithin{equation}{section}
\xpatchcmd{\algorithmic}{\setcounter}{\algorithmicfont\setcounter}{}{}
\providecommand{\algorithmicfont}{}
\newlength{\bibitemsep}\setlength{\bibitemsep}{.3\baselineskip plus .05\baselineskip minus .05\baselineskip}
\newlength{\bibparskip}\setlength{\bibparskip}{0pt}
\let\oldthebibliography\thebibliography
\renewcommand\thebibliography[1]{%
  \oldthebibliography{#1}%
  \setlength{\parskip}{\bibitemsep}%
  \setlength{\itemsep}{\bibparskip}%
}
\newtheorem{truth}{Theorem}
\newtheorem{remark}{Remark}
\newtheorem{assumption}{Assumption}
\title{Discontinuous Galerkin methods for a first-order semi-linear hyperbolic continuum model of a topological resonator dimer array
}
\begin{document}
\author{
Qiang Du
\thanks{Department of Applied Physics and Applied Mathematics, Columbia University, New York, NY 10027, USA. Email: qd2125@columbia.edu} \and  Huaiyu Li 
\thanks{Department of Applied Physics and Applied Mathematics, Columbia University, New York, NY 10027, USA. Email: hl3002@columbia.edu} \and
Michael Weinstein
\thanks{Department of Applied Physics and Applied Mathematics, Columbia University, New York, NY 10027, USA. Email: miw2103@columbia.edu} \and
Lu Zhang
\thanks{Department of Applied Physics and Applied Mathematics, Columbia University, New York, NY 10027, USA. Email: lz2784@columbia.edu} 
}

\maketitle

\begin{abstract}
{We present discontinuous Galerkin (DG) methods for solving a first-order semi-linear hyperbolic system, which was originally proposed as a continuum model for a one-dimensional dimer lattice of topological resonators. We examine the energy-conserving or energy-dissipating property in relation to the choices of simple, mesh-independent numerical fluxes. We demonstrate that, with certain numerical flux choices, our DG method achieves optimal convergence in the $L^2$ norm. We provide numerical experiments that validate and illustrate the effectiveness of our proposed numerical methods}
\end{abstract}

\textbf{Keywords}: discontinuous Galerkin,
first-order hyperbolic system, stability, error estimates, {topological resonators, dynamics of coherent structure} 

\textbf{AMS subject}: 65M12, 65M60, {35L40, 35C07, 78A40}

\section{Introduction}
There is great interest in the robust transport of energy in naturally occurring and engineered materials. A very active direction is the field of topological materials, which  emerged from the recognition that materials with topologically nontrivial (Floquet) band structure when perturbed, for example, by extended defects (domain walls, terminated along line defects,\dots), support robust edge (interface) states whose properties are stable against large but localized perturbations of the system. 
 The role of band structure topology in wave physics was first recognized in the context of condensed matter physics, e.g., the integer quantum Hall effect \cite{klitz80}, and topological insulators \cite{has10}.

There is very wide interest in technologies based on topologically protected states due to the potential for extraordinarily robust energy and information transfer in communications and computing. This has motivated the exploration of
topological wave phenomena observed in engineered metamaterial systems in photonics, acoustics, electronics, elasticity, and mechanics
\cite{haldane2008possible,elastic,photon,lu2014topological,acoustic,hadad2017solitons,chaunsali2019self,huber2016topological} which, in the regime of linear phenomena, are characterized by a linear band structure. A simple discrete model, which has been very influential, is the dimer model emerging from the work of Su-Schrieffer-Heeger (SSH) \cite{heeger1988solitons,su1979solitons} on the Peierls instability. Such systems can naturally be probed in the nonlinear regime via strong excitation. So it is of great interest to study whether topological properties persist in the regime where nonlinear effects are present and whether perhaps different topological phenomena emerge \cite{Alu17, theo19, Theo21, delplace, kiv20}. 

 With a view toward exploring topologically protected states in the nonlinear systems, Haddad, Vitelli, and Alu (HVA) \cite{hadad2017solitons,smirnova2020nonlinear} 
  introduced a discrete and {\it nonlinear} variant of SSH,
  modeling a spatially periodic nonlinear dimer array of electrical circuits. In each period cell of the array are two nonlinear circuit oscillators, and cells are linearly coupled to their nearest neighbors. Numerical simulations in the propagation of traveling  kink-like and pulse-like states. The formal continuum of the limit of the HVA discrete system is a coupled system of first-order semi-linear hyperbolic partial differential equations. After scaling, the system can be written in the form:
  \begin{equation}\label{original_system}
\begin{aligned}
    \partial_t b_1 &= \partial_x b_2 + N\left(b_1, b_2\right)b_2,\\
    \partial_t b_2 &= \partial_x b_1 - N\left(b_1, b_2\right)b_1.
\end{aligned}
\end{equation}
Here, $b_1(x,t)$ and $b_2(x,t)$ are continuum limits of field amplitudes (which modulate a $\pi-$ quasimomentum Bloch modes of the underlying SSH model) at time $t$ for the two nonlinearly coupled circuits in the dimer at location $x$. After appropriate normalization, the nonlinearity can be taken of the form: $N(b_1,b_2)=N(\sqrt{b_1^2+b_2^2})$ and such that
\begin{equation}\label{N_cond}
N(0)=1,\quad N(1)=0,\ N^\prime(r)<0, \quad N(r)\to N_\infty<0\ \textrm{as}\  r\to+\infty.
\end{equation}

{The continuum model is shown to have a rich family of traveling wave solutions (TWSs): subsonic (speed $|c|<1$) and supersonic (speed $|c|>1$) pulses, and subsonic kinks. Here, $|c|=1 $
 is the maximum group speed of linearized perturbation against spatially-uniform equilibria. The exponential stability of supersonic pulses in an appropriate weighted function space is established in \cite{liweinstein23}; dark soliton-like supersonic pulses always outrun any compactly supported perturbations, however large. Subsonic pulses are exponentially unstable. The linearized analysis of \cite{liweinstein23} shows that kinks with cubic nonlinearity are linearly neutrally stable. Hence its nonlinear dynamics are far more subtle.  To investigate the nonlinear stability properties of kinks, we desire a stable and efficient scheme for the evolution of data initialized near kinks. }

In this work, we devote our effort to the numerical studies of the semi-linear hyperbolic system (\ref{original_system}). For such systems, there is plenty of literature on the numerical approximation, such as finite difference method \cite{sod1978survey,beam1976implicit}, finite volume method \cite{leveque2002finite,dumbser2007quadrature}, finite element method \cite{lesaint1973finite,cockburn2003enhanced}, spectral method \cite{gottlieb2001spectral,tadmor1989convergence}, and particle method \cite{engquist1989particle,raviart1985analysis}. Here, we focus on discontinuous Galerkin (DG) methods. DG method is a class of finite element methods using discontinuous piecewise polynomials for numerical solutions and test functions in spatial variables. The first DG method was introduced in $1973$ by Reed and Hill for the neutron transport equation \cite{reed1973triangular}. In the following year, Lasaint and Raviart \cite{lesaint1974finite} conducted the first systematic analysis of the proposed DG method and showed that the order of convergence in the $L^2$ norm is $q$ when using $q$-th order discontinuous finite element spaces. Later, Johnson, N\"{a}vert, and Pitk\"{a}ranta improved the analysis and proved an optimal error estimate, $q + \frac{1}{2}$, in the $L^2$ norm \cite{johnson1984finite}. Peterson \cite{peterson1991note} further showed that this error estimate is optimal and cannot be improved within the class of quasi-uniform meshes. In the late 90s, Cockburn and Shu proposed a class of total variation bounded DG methods for conservation laws \cite{Cockburnrk1991,cockburn1989tvb,cockburn1989tvb1,cockburn1990runge,cockburn1998runge}. Since the entropy satisfies the conditions on the numerical fluxes, these schemes are generally entropy stable. In \cite{chung2006optimal,chung2009optimal}, Chung and Engquist analyzed an energy-conserving mixed DG method with staggered grids for first-order acoustic wave equations. With different meshes for different variables, they obtained optimal error estimates even with unstructured meshes. We refer to \cite{cockburn2000development,shu2014discontinuous} and reference therein for the development and survey of DG methods. Over the past few decades, DG methods have  proven to be very efficient when solving first-order hyperbolic and nearly hyperbolic systems. Because of their attractive properties, such as arbitrary high-order accuracy, local time evolution, element-wise conservation, geometrical flexibility, hp-adaptivity, etc., they have been widely used to solve problems in many fields of science, engineering, and industry, such as computational fluid dynamics, computational acoustics, and computational magneto-hydrodynamics, see \cite{cockburn2018discontinuous,nguyen2011high,rueda2022entropy} and reference therein. While the system under consideration is new and involves special nonlinear couplings, it is of a hyperbolic/wave nature. The development of DG approximations is a natural choice for its numerical solution. It is well known that high-order methods have less dispersion and dissipation errors than their low-order counterparts, making them more favorable when simulating waves.

In this paper, we consider the first-order hyperbolic system with a nonlinear term satisfying the condition (\ref{N_cond}) and its spatial DG discretization to the system (\ref{system_characteristic}). After first transforming the system into one represented by the characteristic variables, it is easy to identify the direction of wave propagation on the one hand and intuitive to define numerical fluxes by following the idea of DG methods for hyperbolic equations on the other. With proper boundary conditions, the proposed semi-discrete scheme is energy-conserving or energy-dissipating, depending on the choices of simple, mesh-independent numerical fluxes. In addition, the proposed DG scheme admits optimal error estimates in the $L^2$ norm for suitably defined numerical fluxes. Moreover, numerical experiments show that high-order DG methods perform well in capturing the dynamic behaviors of the solutions compared to low-order methods, making the algorithmic development a promising tool to help further investigate the mathematical properties of the system and related physical phenomena.

The rest of this paper is organized as follows: Section \ref{sec:continum_model} presents the governing continuum model and its conservation property. Section \ref{Sec: DG_formulation} introduces the DG semi-discretization along with several interelement fluxes and proves the basic energy estimate. In Section \ref{sec:error_analysis}, we prove an optimal error estimate in the $L^2$ norm with particular numerical fluxes and present several numerical experiments to verify our theoretical findings in Section \ref{sec:simulations}. Last, we summarize our results in Section \ref{sec:conclusion} and point out potential areas for future research. 

\section{The continuum model}\label{sec:continum_model}
In this section, we introduce a first-order hyperbolic system for a pair of re-scaled variables in (\ref{original_system}) with a general nonlinear coupling term $N(\cdot, \cdot)$ satisfying the condition (\ref{N_cond}). A key conservation property is also presented.

\subsection{Model in characteristic variables and conservation property}
To make the system (\ref{original_system}) more amendable to mature techniques for hyperbolic equations when designing DG schemes for the system (\ref{original_system}), we introduce the characteristic variables
\begin{equation}\label{b2w}
\begin{pmatrix}w_1 \\ w_2\end{pmatrix} = A\begin{pmatrix}
b_1\\ 
b_2\end{pmatrix}, \quad A = \frac{1}{\sqrt{2}}
\begin{pmatrix}
1 & 1 \\ 1 & -1
\end{pmatrix}
\end{equation}
to yield the following first-order hyperbolic system 
\begin{equation}\label{system_characteristic}
\begin{aligned}
\partial_t w_1 &= \ \ \partial_x w_1 -N\left(w_1, w_2\right)w_2, \quad (x,t)\in \mathbb{R}\times(0,T), \\
\partial_t w_2 &= -\partial_x w_2 + N\left(w_1, w_2\right)w_1, \quad (x,t)\in \mathbb{R}\times(0,T),
\end{aligned}
\end{equation}
where $N(w_1, w_2)= N\Big(\frac{1}{\sqrt{2}}(w_1+w_2),\frac{1}{\sqrt{2}}(w_1 - w_2)\Big)$ retains its form as before, and the constant $T\in (0,+\infty)$ denotes the terminal time of interests. 

Multiplying the first equation in (\ref{system_characteristic}) by $w_1$, and the second equation in (\ref{system_characteristic}) by $w_2$, then integrating the resulting two equations over $\mathbb{R}$ yields
\begin{align}
    \int_{\mathbb{R}} w_1\partial_t w_1\ dx &=\ \  \int_{\mathbb{R}} \big( w_1\partial_x w_1 
    - N\left(w_1, w_2\right)w_2w_1\big)\ dx,\label{eq1_int}\\
    \int_{\mathbb{R}}w_2\partial_t w_2\ dx &= -\int_{\mathbb{R}} \big(w_2\partial_x w_2 - N\left(w_1, w_2\right)w_1w_2\big)\ dx.\label{eq2_int}
\end{align}
Next, define the energy quantity
\begin{equation}\label{energy}
    E(t) = \int_{\mathbb{R}}(w_1^2 + w_2^2)\ dx,
\end{equation}
and add (\ref{eq1_int})--(\ref{eq2_int}) together, we obtain
\begin{equation}\label{dEdt_original}
\frac{1}{2}\frac{dE}{dt} = \frac{1}{2}\frac{d}{dt}\int_{\mathbb{R}}(w_1^2 + w_2^2)\ dx = \frac{1}{2}\left(w_1^2(+\infty, t) - w_1^2(-\infty, t) - w_2^2(+\infty, t) + w_2^2(-\infty, t)\right).
\end{equation} 
Given the boundary conditions that $b_1(x, t), b_2(x,t)$ decay to some constants at $x = \pm\infty$, it is clear that $w_1(x, t), w_2(x,t)$ also decay to some constants. Moreover, if $b_1, b_2$ satisfy
\[b_1(+\infty, t)b_2(+\infty, t) - b_1(-\infty, t)b_2(-\infty, t) = 0,\]
we then have
\begin{equation}\label{bdry_cond_infty}
w_1^2(+\infty, t) - w_1^2(-\infty, t) - w_2^2(+\infty, t) + w_2^2(-\infty, t) = 0,
\end{equation}
namely, $\frac{d}{dt}\int_{\mathbb{R}}(w_1^2 + w_2^2)\ dx = 0$, which implies $E(t) = E(0)$. 

\section{Finite speed of propagation}

Now, for a given interval $(a,b)$ with $b>a$, for $t<\frac{b-a}{2}$, we define
\begin{equation}\label{energy-ab}
    E_{(a,b)}(t) = \int_{a+t}^{b-t}(w_1^2(x,t) + w_2^2(x,t))\ dx.
\end{equation}
Then using the PDE system, we get
\begin{eqnarray*}\label{dEdt_ab}
\frac{d}{dt}  E_{(a,b)}(t) &=& 
 2 \int_{a+t}^{b-t} 
 (w_1\partial_t w_1 + w_2\partial_t w_2)\ dx  - w_1^2(b-t,t)-w_2^2(b-t,t)\\
 && \qquad  - w_1^2(a+t,t)-w_2^2(a+t,t)\\
 &=&
  2 \int_{a+t}^{b-t} 
 (w_1\partial_x w_1  - N\left(w_1, w_2\right)w_1w_2 - w_2\partial_{x} w_2
 + N\left(w_1, w_2\right)w_2w_1 )\ dx  \\
 && \qquad  - w_1^2(b-t,t)-w_2^2(b-t,t) - w_1^2(a+t,t)-w_2^2(a+t,t)\\
  &=&   w_1^2(b-t,t)- w_1^2(a+t,t) - w_2^2(b-t,t) +w_2^2(a+t,t)\\
  &&\qquad
   - w_1^2(b-t,t)-w_2^2(b-t,t)- w_1^2(a+t,t)-w_2^2(a+t,t)\\
  &=&  -2  w_1^2(a+t,t)-2 w_2^2(b-t,t)  \leq 0.
\end{eqnarray*} 
This implies
$$ E_{(a,b)}(t) \leq E_{(a,b)}(0)= \int_{a}^{b}(w_1^2(x,0) + w_2^2(x,0))\ dx.
$$
Thus, for initial data with compact support in $(-A, A)$, we see that  the solution also has compact support $(-A-t, A+t)$ at any finite time $t$,
which leads to the finite speed of propagation.

One can apply a similar argument to perturbed solutions (with the help of the uniform Liphshitz continuity of the nonlinear term and the Gronwall inequality).

\section{Semi-discrete DG formulation}\label{Sec: DG_formulation}

To derive a DG formulation for the system (\ref{system_characteristic}), we first truncate the real line to a bounded interval $[x_a, x_b] \subseteq \mathbb{R}$, 
then discretize the computational domain $I := [x_a, x_b] \subseteq \mathbb{R}$ by non-overlapping elements $I_j$ such that $[x_a, x_b] = \cup_{j = 1}^N [x_{j-\frac{1}{2}}, x_{j+\frac{1}{2}}]$ with $I_j = (x_{j-\frac{1}{2}}, x_{j+\frac{1}{2}})$. In addition, denote $h_j := x_{j+\frac{1}{2}} - x_{j-\frac{1}{2}}$ and $h := \max_j h_j$, we impose a mesh regularity condition $\frac{h}{\min h_j} < \zeta$ with $\zeta > 0$ being fixed during the mesh refinement. On each element $I_j$, we approximate $(w_1,w_2)$ by $\left(w_1^h,w_2^h\right)$, each belonging to the following space of piecewise polynomial of degree $q$,
\[V_q^h = \{v^h(x,t), v^h(x,t)\in\mathcal{P}^q(I_j),\ q\geq 0,\ \forall j\}.\]
Let us further denote
\[\eta^{\pm}(x) := \lim_{\epsilon\rightarrow 0^{\pm}} \eta(x + \epsilon),\]
and introduce the conventional notations for the weighted averages and jumps at the element edge $x_{j+\frac{1}{2}}, j = 1,\cdots, N-1$ by
\[\{v^h\} := \alpha v^{h,-}(x_{j+\frac{1}{2}}) + (1-\alpha) v^{h,+}(x_{j+\frac{1}{2}}),\quad [v^h] := v^{h,-}(x_{j+\frac{1}{2}}) - v^{h,+}(x_{j+\frac{1}{2}}), \quad \alpha\in\mathbb{R},\]
respectively. Next, consider the discrete energy which is analogous to (\ref{energy}) in the element $I_j$,
\begin{equation}\label{dis_energy}
    E_{j}^h(t) = \frac{1}{2}\int_{I_j} \big( (w_1^h)^2 + (w_2^h)^2 \big)\ dx,
\end{equation}
and its time derivative
\begin{equation}\label{dis_dEdt}
    \frac{dE_{j}^h(t)}{dt} = \int_{I_j} 
    (w_1^hw_{1t}^h + w_2^hw_{2t}^h)\ dx.
\end{equation}

We then seek an approximation of the system (\ref{system_characteristic}) which is compatible with the discrete energy (\ref{dis_energy}) and its time derivative (\ref{dis_dEdt}). To this end, we choose $\phi, \psi \in V_h^q$, and test the first equation in (\ref{system_characteristic}) by $\phi$, and the second equation in (\ref{system_characteristic}) by $\psi$ to make residuals orthogonal to all test functions $\phi, \psi \in V_q^h$, resulting in the following local statement, (see, e.g., \cite{hesthaven2007nodal}), 
\begin{equation}\label{weak_form1}
\left\{
\begin{aligned}
    \int_{I_j}  ( w_{1t}^h \phi - w_{1x}^h \phi)\ dx + \sum_{k} \underline{\omega}_{k,j}N\big(w_1^h(x_{k,j}, t), w_2^h(x_{k,j}, t)\big)w_2^h(x_{k,j}, t)\phi(x_{k,j}) &= 0,\\
    \int_{I_j} (  w_{2t}^h \psi + w_{2x}^h \psi ) \ dx - \sum_{k} \underline{\omega}_{k,j}N\big(w_1^h(x_{k,j}, t), w_2^h(x_{k,j}, t)\big)w_1^h(x_{k,j}, t)\psi(x_{k,j}) &= 0,
\end{aligned} \right.
\end{equation}
where we have used a quadrature rule, satisfying the following Assumption \ref{assump}, with nodes $x_{k, j}$ in $I_j$ and weights $\underline{\omega}_{k,j} > 0$ to approximate the integration of the nonlinear terms involving $N\big(\cdot, \cdot\big)$ (see, e.g., \cite{appelo2020energy}). 
\begin{assumption}\label{assump}
The quadrature rule satisfies, $\forall I_j$,
    \begin{align}
    \sum_k \underline{\omega}_{k, j} \phi^2(x_{k,j}) - \int_{I_j} \phi^2\ d x &= 0,  \nonumber\\
    \sum_j\Big| \sum_k \underline{\omega}_{k, j}\phi(x_{k, j})g(x_{k, j}) - \int_{I_j} \phi f \ d x \Big| &\leq C_0h^{q+1}\|\phi\|_{L^2(I)}|f|_{H^{q+1}(I)},\nonumber
    \end{align}
    $\forall \phi \in \mathcal{P}^q(I_j)$ and $\forall f \in H^{q+1}(I)$. Here, the constant $C_0$ is independent of $h$ and $f$. 
\end{assumption}
The above quadrature evaluates the integrals of $w_{it}^h \phi\pm w_i^h \phi_x$ on $I_j$ exactly.
Further, an integration by parts to the system (\ref{weak_form1}) leads to the following DG scheme
\begin{multline}\label{DG1}
    \int_{I_j} ( w_{1t}^h \phi + w_1^h \phi_x )\ dx \\+ \sum_k \underline{\omega}_{k, j}N\big(w_1^h(x_{k,j}), w^h_2(x_{k,j})\big)w_2^h(x_{k,j})\phi(x_{k,j}) 
    =  \widehat{w_1^h}\phi^- \Big|_{j+\frac{1}{2}} - \widehat{w_1^h}\phi^+\Big|_{j-\frac{1}{2}},
    \end{multline}
    and
\begin{multline}\label{DG2}
    \int_{I_j} ( w_{2t}^h \psi - w_2^h \psi_x ) \ dx\\ - \sum_k \underline{\omega}_{k, j}N\big(w_1^h(x_{k,j}), w_2^h(x_{k,j})\big)w_1^h(x_{k,j})\psi(x_{k,j}) = \widetilde{w_2^h}\psi^+\Big|_{j-\frac{1}{2}}- \widetilde{w_2^h}\psi^- \Big|_{j+\frac{1}{2}},
\end{multline}
where we have omitted $t$ in $w^h_1(x_{k,j}), w^h_2(x_{k,j})$ for simplicity, and $\widehat{w_1^h}$ and $\widetilde{w_2^h}$ are numerical fluxes at the element boundaries which are imposed to guarantee the stability of the proposed numerical scheme. The specific choices of $\widehat{w_1^h}$ and $\widetilde{w_2^h}$ are discussed in the next subsection.

Note that by choosing $(\phi, \psi) = (w_1^h, w_2^h)$, and summing (\ref{DG1}) and (\ref{DG2}) together over all cells $I_j$, we get
\begin{multline}\label{discrete_dEdt}
    \frac{dE^h}{dt} = \sum_{j=1}^{N}  \left(\widehat{w_1^h} - \frac{1}{2}w_1^{h,-}\right)w_1^{h,-} \Big|_{j+\frac{1}{2}} - \left(\widehat{w_1^h} - \frac{1}{2}w_1^{h,+}\right)w_1^{h,+}\Big|_{j-\frac{1}{2}} \\- \left(\frac{1}{2}w_2^{h,+} - \widetilde{w_2^h}\right)w_2^{h,+}\Big|_{j-\frac{1}{2}} +\left(\frac{1}{2}w_2^{h,-}- \widetilde{w_2^h}\right)w_2^{h,-} \Big|_{j+\frac{1}{2}},
\end{multline}
where
\begin{equation}\label{eq:discrete_energy}
E^h = \sum_{j=1}^{N} E_j^h 
\end{equation}
with $E_j^h$ is defined in (\ref{dis_energy}).

\subsection{Fluxes}
To complete the DG formulations (\ref{DG1})--(\ref{DG2}) proposed above and generate a stable scheme, it is desirable to specify the numerical fluxes $\widehat{w_1^h}$ and $\widetilde{w_2^h}$ both at interelement boundaries and physical boundaries such that
\[ \frac{dE^h}{dt} \leq 0.\]
In particular, $\frac{dE^h}{dt} < 0$ leads to a dissipating scheme, and $\frac{dE^h}{dt} = 0$ yields a conservative scheme. 

\subsubsection{Interelement boundaries}
We first consider the interelement boundaries, that is, $x_{\frac{3}{2}}, x_{\frac{5}{2}}, \cdots, x_{N-\frac{1}{2}}$. From the expression in (\ref{discrete_dEdt}), to obtain a stable scheme, it suffices to control the terms
\begin{equation}\label{J}
    \begin{aligned}
J_j:= &\left(2\widehat{w_1^h}-w_1^{h,-}\right)w_1^{h,-} \Big|_{j+\frac{1}{2}} - \left(2\widehat{w_1^h}-w_1^{h,+}\right)w_1^{h,+}\Big|_{j+\frac{1}{2}} \\
 &+\left(w_2^{h,-} - 2\widetilde{w_2^h}\right)w_2^{h,-} \Big|_{j+\frac{1}{2}} - \left(w_2^{h,+}-2\widetilde{w_2^h}\right)w_2^{h,+}\Big|_{j+\frac{1}{2}} \leq 0,\quad j = 1,2,\cdots,N-1.
    \end{aligned}
\end{equation}
Particularly, when $J_j \leq 0$, $\frac{dE^h}{dt} \leq 0$, namely, we obtain a stable scheme. In this work, we introduce the following numerical fluxes at the interelement boundaries (see, e.g., \cite{hesthaven2007nodal}), 
\begin{equation}\label{flux}
\begin{aligned}
    \widehat{w_1^h} &= \{w_1^h\}  -\frac{1-\alpha_1}{2}[w_1^h] + \frac{\beta_1}{2}[w_2^h],\\
    \widetilde{w_2^h} &= \{w_2^h\} + \frac{1-\alpha_2}{2}[w_2^h] + \frac{\beta_2}{2}[w_1^h],
\end{aligned}
\end{equation}
where 
\begin{equation}\label{parameters_flux}
   0\leq \alpha_1, \alpha_2 \leq 1,\quad \beta_1, \beta_2\in\mathbb{R}, \quad \mbox{and} \quad -(1 - \max\{\alpha_1, \alpha_2\}) +\frac{|\beta_1 - \beta_2|}{2} \leq 0. 
\end{equation}
Plugging (\ref{flux}) into (\ref{J}), we have 
\begin{equation*}
\begin{aligned}
J_j = &-(1-\alpha_1)[w_1^h]^2 -(1-\alpha_2)[w_2^h]^2 + (\beta_1-\beta_2)[w_1^h][w_2^h]\\
\leq & -(1 - \max\{\alpha_1, \alpha_2\})\left([w_1^h]^2 + [w_2^h]^2\right) + \frac{|\beta_1 - \beta_2|}{2}\left([w_1^h]^2 + [w_2^h]^2\right)\\
=& \left(-(1 - \max\{\alpha_1, \alpha_2\}) +\frac{|\beta_1 - \beta_2|}{2} \right)\left([w_1^h]^2 + [w_2^h]^2\right).
\end{aligned}
\end{equation*}
when the conditions in (\ref{parameters_flux}) are satisfied, we have $J_j \leq 0$. In Particular, when $\beta_1 = \beta_2 = 0$ and $\alpha_1 = \alpha_2 = 1$, one can recover the commonly used \emph{central flux}, namely,
\begin{equation}\label{central_flux}
    \widehat{w_1^h} = \{w_1^h\}, \quad 
    \widetilde{w_2^h} = \{w_2^h\},
\end{equation}
which gives an energy conserving scheme with $J_j = 0$; when $\beta_1 = \beta_2 = 0, \alpha_1 = \alpha_2 = 0$, we have the so-called \emph{upwind flux}, that is,
\begin{equation}\label{upwind_flux}
    \widehat{w_1^h} = w_1^{h,+}, \quad \widetilde{w_2^h} = w_2^{h,-},
\end{equation}
which yields $J_i < 0$ and gives an energy dissipating scheme.

\subsubsection{Physical boundaries}
We next consider the approximation of the physical boundary conditions. First, note that the DG scheme (\ref{DG1})--(\ref{DG2}) is defined in the truncated domain $[x_a, x_b]$ for $w_1$ and $w_2$; second, the principal part of the hyperbolic system (\ref{system_characteristic}) indicates that the wave associated with $w_1/w_2$ propagates from the right/left to the left/right. This indicates that we need to supply an inflow boundary condition for $w_1/w_2$ at $x_b/x_a$. Here, for simplicity of analysis, we consider Dirichlet boundary conditions of
\begin{equation}\label{physical_bdry}
    w_1(x_b, t) = 0, \quad  w_2(x_a, t) = 0, \quad t \in (0, T]. 
\end{equation}
To approximate the physical boundary conditions (\ref{physical_bdry}), we choose 
\begin{equation}\label{flux_boundary}
    \widehat{w_1^h}(x_a,t) = w_1^h(x_a, t), \quad \widehat{w_1^h}(x_b,t) = 0, \quad \widetilde{w_2^h}(x_a, t) = 0, \quad \widetilde{w_2^h}(x_b, t) = w_2^h(x_b, t).
\end{equation}
This reflects the Dirichlet boundary condition of (\ref{physical_bdry}) and purely internal choices at the outflow. Then plug (\ref{flux_boundary}) into (\ref{discrete_dEdt}), and only consider the physical boundary points, namely, $x_{\frac{1}{2}}=x_a$ and $x_{N+\frac{1}{2}}=x_b$, we have the contribution to the discrete energy from the physical boundaries is given by
\begin{equation*}
\begin{aligned}
 &-\left(\widehat{w_1^h}-\frac{1}{2}w_1^{h,+}\right)w_1^{h,+}\Big|_{\frac{1}{2}}
- \left(\frac{1}{2}w_2^{h,+}-\widetilde{w_2^h}\right)w_2^{h,+}\Big|_{\frac{1}{2}} \\
&+  \left(\widehat{w_1^h}-\frac{1}{2}w_1^{h,-}\right)w_1^{h,-} \Big|_{N+\frac{1}{2}} 
    + \left(\frac{1}{2}w_2^{h,-} - \widetilde{w_2^h}\right)w_2^{h,-} \Big|_{N+\frac{1}{2}}\\
    = &-\frac{1}{2}\left((w_1^{h,+})^2\Big|_{\frac{1}{2}} + (w_2^{h,+})^2\Big|_{\frac{1}{2}}+ (w_1^{h,-})^2\Big|_{N+\frac{1}{2}} + (w_2^{h,-})^2\Big|_{N+\frac{1}{2}}\right) \leq 0.
\end{aligned}
\end{equation*}

We are now ready to establish the stability of the proposed DG scheme (\ref{DG1})--(\ref{DG2}) with the numerical fluxes (\ref{flux}) and (\ref{flux_boundary}).
    
\begin{truth}\label{thm1}
    Given the DG scheme (\ref{DG1})--(\ref{DG2}), the physical boundary conditions (\ref{physical_bdry}) and the numerical fluxes defined in (\ref{flux}) and (\ref{flux_boundary}), then the discrete energy $E^h(t)$ defined in (\ref{eq:discrete_energy}) 
    satisfies
    \[\begin{aligned}
    \frac{dE^h}{dt} =  \frac{1}{2}\sum_{j=1}^{N-1} &-(1-\alpha_1)[w_1^h]^2\Big|_{j+\frac{1}{2}} -(1-\alpha_2)[w_2^h]^2\Big|_{j+\frac{1}{2}} + (\beta_1-\beta_2)[w_1^h][w_2^h]\Big|_{j+\frac{1}{2}}\\
    &-\Big((w_1^{h,+})^2\Big|_{\frac{1}{2}}+ (w_2^{h,+})^2\Big|_{\frac{1}{2}}+ (w_1^{h,-})^2\Big|_{N+\frac{1}{2}} + (w_2^{h,-})^2\Big|_{N+\frac{1}{2}}\Big).
    \end{aligned}\]
    If the parameters $\alpha_1, \alpha_2, \beta_1$ and $\beta_2$ satisfy the conditions in (\ref{parameters_flux}), then
    \[E^h(t) \leq E^h(0).\]
\end{truth}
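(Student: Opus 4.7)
The plan is to take the test functions $(\phi,\psi)=(w_1^h,w_2^h)$ in the DG equations (\ref{DG1})--(\ref{DG2}), sum over all cells $I_j$, and show that the stated identity for $\frac{dE^h}{dt}$ emerges after carefully grouping the edge contributions. The first observation I would make is that with this choice of test functions, Assumption \ref{assump} ensures that the quadrature approximating $\int_{I_j} w_i^h w_{i,t}^h\,dx$ and $\int_{I_j} w_i^h w_{i,x}^h\,dx$ is exact (both integrands are polynomials in $\mathcal{P}^{2q}(I_j)$ representable through a polynomial of degree $q$ times itself), so no quadrature error enters. The crucial structural observation is that the two nonlinear flux contributions cancel pointwise: at every quadrature node $x_{k,j}$,
\begin{equation*}
N\bigl(w_1^h(x_{k,j}),w_2^h(x_{k,j})\bigr)w_2^h(x_{k,j}) w_1^h(x_{k,j}) - N\bigl(w_1^h(x_{k,j}),w_2^h(x_{k,j})\bigr)w_1^h(x_{k,j}) w_2^h(x_{k,j}) = 0.
\end{equation*}
This reduces the statement to the purely linear identity (\ref{discrete_dEdt}), which is already derived in the excerpt.

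Next, I would decompose the sum over the $N+1$ endpoints $\{x_{j+1/2}\}$ into two disjoint pieces: the $N-1$ interior edges $j=1,\dots,N-1$ and the two physical endpoints $x_{1/2}=x_a$ and $x_{N+1/2}=x_b$. At each interior edge, the contribution is exactly the quantity $\frac{1}{2}J_j$ of (\ref{J}), because two consecutive cells $I_j$ and $I_{j+1}$ each contribute a trace evaluation at $x_{j+1/2}$, one from its $-$ side and one from its $+$ side. Substituting the flux definition (\ref{flux}) and using the elementary algebraic identity
\begin{equation*}
(v^-)^2-(v^+)^2 = [v]\bigl(v^- + v^+\bigr),\qquad v^- + v^+ = 2\{v\}\ (\text{when }\alpha=1/2),
\end{equation*}
together with the slightly more general weighted-average computation, one obtains after a direct expansion
\begin{equation*}
J_j = -(1-\alpha_1)[w_1^h]^2 - (1-\alpha_2)[w_2^h]^2 + (\beta_1-\beta_2)[w_1^h][w_2^h]
\end{equation*}
at the edge $x_{j+1/2}$; this is precisely the content of the first line in the asserted identity (after dividing by $2$).

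For the two physical endpoints, I would substitute the inflow/outflow choices (\ref{flux_boundary}) into (\ref{discrete_dEdt}). At $x_a$ (the $+$ trace of $I_1$) the prescription $\widehat{w_1^h}=w_1^h$, $\widetilde{w_2^h}=0$ yields the contribution $-\tfrac{1}{2}((w_1^{h,+})^2+(w_2^{h,+})^2)\big|_{1/2}$, and symmetrically at $x_b$. Combining the interior and boundary contributions produces exactly the identity claimed in Theorem \ref{thm1}. Finally, to obtain $E^h(t)\le E^h(0)$ I would bound the cross term by Cauchy--Schwarz / Young's inequality:
\begin{equation*}
(\beta_1-\beta_2)[w_1^h][w_2^h] \le \tfrac{|\beta_1-\beta_2|}{2}\bigl([w_1^h]^2+[w_2^h]^2\bigr),
\end{equation*}
so that each interior-edge contribution is bounded above by $\bigl(-(1-\max\{\alpha_1,\alpha_2\})+\tfrac{|\beta_1-\beta_2|}{2}\bigr)([w_1^h]^2+[w_2^h]^2)$, which is nonpositive under (\ref{parameters_flux}). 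Integrating in $t$ from $0$ to $t$ then gives the asserted energy inequality.

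The only real obstacle is the careful bookkeeping: keeping track of the alternating $\pm$ traces that arise when two adjacent cells share an edge, and verifying that the various $\tfrac{1}{2}$ factors assemble correctly. The proof contains no conceptual difficulty beyond what has been previewed in the paragraphs preceding the theorem; essentially all of the ingredients are already staged in (\ref{discrete_dEdt}), (\ref{J}), (\ref{flux}), and (\ref{flux_boundary}), and the theorem is the clean assembly of these pieces into a single identity plus its stability corollary.
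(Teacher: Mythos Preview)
Your proposal is correct and follows essentially the same approach as the paper: the argument is entirely staged in the paragraphs preceding the theorem (the derivation of (\ref{discrete_dEdt}), the computation of $J_j$ after inserting (\ref{flux}), and the boundary calculation with (\ref{flux_boundary})), and the theorem is just the assembly of these pieces together with the Young-inequality bound on the cross term. One small clarification: in the scheme (\ref{DG1})--(\ref{DG2}) the linear volume terms $\int_{I_j} w_{it}^h\phi\,dx$ and $\int_{I_j} w_i^h\phi_x\,dx$ are exact integrals, not quadratures, so no appeal to Assumption~\ref{assump} is needed there; the quadrature enters only for the nonlinear terms, where your pointwise cancellation observation is exactly what is used.
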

\begin{remark}
We note that when $\alpha_1 = \alpha_2 = 1$, $\beta_1 = \beta_2$, we do not get any contributions from the interelement boundaries. Moreover, for a periodic boundary condition, the discrete energy $E^h$ is constant as for the continuous system. Here, the conservation of the continuous system is obtained since (\ref{bdry_cond_infty}) is satisfied for a periodic case. In addition, for the numerical experiments conducted in Section \ref{sec:simulations}, we consider three different boundary conditions: the periodic boundary conditions, the Dirichlet boundary conditions (\ref{physical_bdry}), and the boundary conditions where $b_1$ and $b_2$ decay to some constants. For all three different types of boundary conditions, we obtain stable schemes.
\end{remark}

\section{Error analysis}\label{sec:error_analysis}
In this section, We proceed to derive error estimates of the DG scheme (\ref{DG1})-(\ref{DG2}) with the numerical fluxes (\ref{flux}) and (\ref{flux_boundary}) for the system (\ref{system_characteristic}). Denote 
  \begin{equation}\label{g1g2}
  z_1(u,v) := N(u,v)u, \quad\mbox{and}\quad z_2(u, v) := N(u, v)v,
  \end{equation}
  we consider the case where the nonlinear interaction $N(\cdot, \cdot)$ satisfies
  \begin{equation}\label{N_proper}
\left\|\nabla z_1\right\|_{L^\infty(I)} + \left\|\nabla z_2\right\|_{L^\infty(I)}
  \leq C_1,
  \end{equation}
 for a positive constant $C_1$, which is easily satisfied when the nonlinear coupling $N(\cdot, \cdot)$ satisfying (\ref{N_cond}). In Section \ref{sec:projection}, we review some projections and inequalities that are essential for our proof. The error estimates in the $L^2$-norm are given from Section \ref{sec:error_estimate} to Section \ref{sec:improved_convergence}. 
  
  \subsection{Projections}\label{sec:projection}
 We define the Gauss–Radau projections $P_h^{\pm}$ into $V_q^h$ such that for any $u \in H^{q+1}(I)$, $q \geq 1$ and $I_j = (x_{j-\frac{1}{2}}, x_{j+\frac{1}{2}}), j = 1, 2, \cdots, N$,
 \begin{align}
  &\int_{I_j} (P_h^{\pm}u - u)v_h\ dx = 0, \ \forall v_h\in \mathcal{P}^{q-1}(I_j),\nonumber\\
  & P_h^+u(x_{j-\frac{1}{2}}^+) = u(x_{j - \frac{1}{2}}),\ P_h^-u(x_{j+\frac{1}{2}}^-) = u(x_{j + \frac{1}{2}})\label{projection_q0}.
  \end{align}
  When $q = 0$, the Gauss–Radau projections $P_h^{\pm}$ are defined only by (\ref{projection_q0}). For each projection, we have the following inequality holds for any $u\in H^{q+1}(I)$ (see e.g., \cite{ciarlet2002finite}),
  \begin{equation}\label{projection}
  \|u - Q_h u\|_{L^2(I)} + h\|u - Q_hu\|_{L^\infty (I)} + h^{\frac{1}{2}}\|u - Q_hu\|_{L^2(\Gamma_h)} \leq Ch^{q+1},
  \end{equation}
  where $Q_h = P_h^{\pm}$, $\Gamma_h$ contains all the trace data, and $\|\eta\|_{L^2(\Gamma_h)} = \big(\sum_{j = 1}^{N} \eta^2(x_{j-\frac{1}{2}}) + \eta^2(x_{j+\frac{1}{2}})\big)^{\frac{1}{2}}$.
  
  \subsection{Error estimate with general fluxes (\ref{flux})}\label{sec:error_estimate}
  We are now ready to present error estimates for the DG scheme (\ref{DG1})-(\ref{DG2}) with the numerical fluxes defined in (\ref{flux}) and (\ref{flux_boundary}). Denote the errors by
  \begin{equation}\label{error_relations}
 \begin{aligned}
  &e_{w_1} := w_1 - w_1^h = w_1 - P_h^+ w_1 + P_h^+ w_1 - w_1^h =: \eta_{w_1} + \xi_{w_1},\\
  &e_{w_2} := w_2 - w_2^h = w_2 - P_h^- w_2 + P_h^- w_2 - w_2^h =: \eta_{w_2} + \xi_{w_2}.
  \end{aligned}
  \end{equation}
  
  We first consider the projection of the initial data, which is essential for the derivation of optimal convergence. Particularly, the initial data are chosen through
  \begin{equation}\label{initial_projection}
  w_1^h(x, 0) = P_h^+w_1(x, 0), \quad w_2^h(x, 0) = P_h^-w_2(x, 0),
  \end{equation}
  namely, $\xi_{w_1} = 0, \xi_{w_2} = 0$. Next, let us consider the numerical error energy
  \begin{equation}
      \mathcal{E}^h := \sum_j \frac{1}{2}\int_{I_j}  (\xi_{w_1}^2 + \xi_{w_2}^2)\ dx,
  \end{equation}
  and proceed to derive the estimate of $\mathcal{E}^h$. Since both the continuous solution $(w_1, w_2)$ and the DG solution $(w_1^h, w_2^h)$ satisfy the DG scheme (\ref{DG1})--(\ref{DG2}), we then have the following error equations for any $(\phi, \psi) \in V_h^q \times V_h^q$,
  \begin{multline}\label{error_eq1}
      \int_{I_j} e_{w_1t} \phi \ dx +\mathcal{B}_j^1(e_{w_1}, \phi) + \sum_k \underline{\omega}_{k,j} \Big(N\big(w_1(x_{k,j}), w_2(x_{k,j})\big)w_2(x_{k,j}) \\ - N\big(w_1^h(x_{k,j}), w_2^h(x_{k,j})\big)w_2^h(x_{k,j})\Big)\phi(x_{k,j})  = 0,
\end{multline}
\begin{multline}\label{error_eq2}
      \int_{I_j} e_{w_2t} \psi \ dx -\mathcal{B}_j^2(e_{w_2}, \psi) - \sum_k\underline{\omega}_{k,j} \Big(N\big(w_1(x_{k,j}), w_2(x_{k,j})\big)w_1(x_{k,j}) \\
      - N\big(w_1^h(x_{k,j}), w_2^h(x_{k,j})\big)w_1^h(x_{k,j})\Big)\psi(x_{k,j})  = 0,
  \end{multline}
  where 
  \begin{align}
  \mathcal{B}_j^1(e_{w_1}, \phi) &= \int_{I_j} e_{w_1}\phi_x\ dx - \left(\widehat{e}_{w_1}\phi^-\Big|_{j+\frac{1}{2}} - \widehat{e}_{w_1}\phi^+\Big|_{j-\frac{1}{2}}\right),\label{B1}\\
  \mathcal{B}_j^2(e_{w_2}, \psi) &= \int_{I_j} e_{w_2}\psi_x\ dx - \left(\widetilde{e}_{w_2}\psi^-\Big|_{j+\frac{1}{2}} - \widetilde{e}_{w_2}\psi^+\Big|_{j-\frac{1}{2}}\right).\label{B2}
  \end{align}
   Choosing $\phi = \xi_{w_1}$ and $\psi = \xi_{w_2}$ in (\ref{error_eq1})--(\ref{error_eq2}), then adding the resulting two equations, invoking the relation (\ref{error_relations}), and summing over all elements $I_j$ yields
  \begin{equation}\label{dxi_dt}
     \frac{1}{2} \frac{d}{dt} \sum_j \int_{I_j}
     (\xi_{w_1}^2 + \xi_{w_2}^2 )\ dx = \Lambda_1 + \Lambda_2 + \Lambda_3 + \Lambda_4 + \Lambda_5,
  \end{equation}
  where
  \begin{equation}\label{lambda}
  \begin{aligned}
      \Lambda_1 &:= \sum_j\int_{I_j} -  ( \eta_{w_1t}\xi_{w_1} + \eta_{w_2t}\xi_{w_2} )\ dx,\\
      \Lambda_2 &:=\sum_j -\mathcal{B}_j^1(\xi_{w_1}, \xi_{w_1}) + \mathcal{B}_j^2(\xi_{w_2}, \xi_{w_2}),\\
      \Lambda_3 &:= -\sum_{j,k}\underline{\omega}_{k,j} \Big(N\big(w_1(x_{k,j}), w_2(x_{k,j})\big)w_2(x_{k,j}) - N\big(w_1^h(x_{k,j}), w_2^h(x_{k,j})\big)w_2^h(x_{k,j})\Big)\xi_{w_1}(x_{k,j}),\\
      \Lambda_4 &:= \ \ \ \sum_{j,k}\underline{\omega}_{k,j}\Big(N\big(w_1(x_{k,j}), w_2(x_{k,j})\big)w_1(x_{k,j}) - N\big(w_1^h(x_{k,j}), w_2^h(x_{k,j})\big)w_1^h(x_{k,j})\Big)\xi_{w_2}(x_{k,j}),\\
      \Lambda_5 &:= \sum_j\int_{I_j} -\eta_{w_1}\xi_{w_1x} + \eta_{w_2}\xi_{w_2x}\ dx\\
      &\qquad + \left(\widehat{\eta}_{w_1}\xi_{w_1}^-\Big|_{j+\frac{1}{2}} - \widehat{\eta}_{w_1}\xi_{w_1}^+\Big|_{j-\frac{1}{2}}\right) - \left(\widetilde{\eta}_{w_2}\xi_{w_2}^-\Big|_{j+\frac{1}{2}} - \widetilde{\eta}_{w_2}\xi_{w_2}^+\Big|_{j-\frac{1}{2}}\right),
  \end{aligned}
  \end{equation}
  where $\mathcal{B}_j^1$ and $\mathcal{B}_j^2$ are defined in (\ref{B1}) and (\ref{B2}), respectively.
  
  In what follows, we assume that the solution is sufficiently smooth up to some time, $T$. In addition, we denote by $C$ a generic positive constant which is independent of the cell length $h$ for a shape-regular mesh but may vary from line to line. Then we have the following error estimate.

  \begin{truth}\label{truth2}
  Let $(w_1, w_2)$ be a smooth solution of system (\ref{system_characteristic}) with the physical boundary condition (\ref{physical_bdry}), and $(w_1^h, w_2^h) \in V_h^q\times V_h^q, q \geq 0$ be the numerical solution of the DG scheme (\ref{DG1})-(\ref{DG2}) with the numerical fluxes defined in (\ref{flux}) and (\ref{flux_boundary}), and the nonlinear interaction $N(\cdot,\cdot)$ satisfies the condition (\ref{N_proper}). Further, the approximation of the initial data is computed based on (\ref{initial_projection}). Then there exists a positive constant $\mathcal{C}$ depending only on
  the degree $q$, the bound given in (\ref{N_proper}) and the shape regularity of the mesh, but independent of $h$, such that
  \begin{equation}\label{error_estimate}
  \|e_{w_1}(x, T)\|_{L^2(I)} + \|e_{w_2}(x, T)\|_{L^2(I)} \leq \mathcal{C}h^q.
  \end{equation}
  \end{truth}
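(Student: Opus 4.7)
The plan is to run an energy argument on the reduced quantity $\mathcal{E}^h(t) = \frac{1}{2}\sum_j \int_{I_j}\bigl(\xi_{w_1}^2 + \xi_{w_2}^2\bigr)\,dx$ starting from the identity (\ref{dxi_dt}), bound each of $\Lambda_1,\ldots,\Lambda_5$ by a combination of $\mathcal{E}^h(t)$ and a residual of the form $Ch^{2r}$, and close via Gronwall. The claim (\ref{error_estimate}) then follows from the triangle inequality $e_{w_i}=\eta_{w_i}+\xi_{w_i}$ together with the projection bound (\ref{projection}). Observe that the initialization (\ref{initial_projection}) gives $\mathcal{E}^h(0)=0$, so no initial-data error enters the Gronwall step.

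For $\Lambda_2$, I would simply re-run the stability computation in the proof of Theorem \ref{thm1} with $(w_1^h,w_2^h)$ replaced by $(\xi_{w_1},\xi_{w_2})$: under the admissibility conditions (\ref{parameters_flux}) this yields $\Lambda_2\le 0$ and it can be dropped. The term $\Lambda_1$ is handled by Cauchy--Schwarz, Young's inequality, and $\|\eta_{w_i,t}\|_{L^2}\le Ch^{q+1}$, giving $|\Lambda_1|\le \mathcal{E}^h(t)+Ch^{2q+2}$. For the nonlinear pieces $\Lambda_3,\Lambda_4$, I would first invoke Assumption \ref{assump} to replace the quadrature sums by the corresponding integrals, at the cost of an $O(h^{q+1})$ remainder; then apply the Lipschitz bound (\ref{N_proper}) on $z_1,z_2$ to control $N(w_1,w_2)w_i-N(w_1^h,w_2^h)w_i^h$ by $|e_{w_1}|+|e_{w_2}|$; finally split $e_{w_i}=\eta_{w_i}+\xi_{w_i}$ via (\ref{error_relations}) and use (\ref{projection}) to obtain $|\Lambda_3|+|\Lambda_4|\le C\mathcal{E}^h(t)+Ch^{2q+2}$.

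The hard part will be $\Lambda_5$, and this is exactly where the full order of accuracy is lost. The Gauss--Radau projections were designed so that $\eta_{w_1}^+=0$ and $\eta_{w_2}^-=0$ at every interface, and so that the volume contributions $\int_{I_j}\eta_{w_i}\,\xi_{w_i,x}\,dx$ vanish by orthogonality, since $\xi_{w_i,x}\in\mathcal{P}^{q-1}$. However, for generic admissible parameters $\alpha_i,\beta_i$ in (\ref{flux}) the numerical-flux residual $\widehat{\eta}_{w_1}$ does not cancel: it retains traces of $\eta_{w_1}^-$ and of $\eta_{w_2}^\pm$ at each $x_{j+1/2}$, and similarly for $\widetilde{\eta}_{w_2}$. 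What remains of $\Lambda_5$ after these simplifications is a sum of boundary products of the schematic form $\sum_{j+1/2}\eta^-\,\xi^\pm$. Using (\ref{projection}) one bounds the $\eta$-trace by $Ch^{q+1/2}$, and then the only available estimate for the $\xi$-trace is the inverse trace inequality $\|\xi\|_{L^2(\Gamma_h)}\le Ch^{-1/2}\|\xi\|_{L^2(I)}$, valid on shape-regular meshes for $\xi\in V_q^h$. Combining these gives $|\Lambda_5|\le \mathcal{E}^h(t)+Ch^{2q}$; the $h^{-1/2}$ factor from the inverse inequality is what reduces the residual from $h^{2q+2}$ to $h^{2q}$ and dictates the $h^q$ rate in the theorem.

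Assembling all the bounds yields $\tfrac{d}{dt}\mathcal{E}^h(t)\le C\mathcal{E}^h(t)+Ch^{2q}$. Gronwall's inequality on $[0,T]$, together with $\mathcal{E}^h(0)=0$, gives $\mathcal{E}^h(T)\le Ch^{2q}$; one last triangle inequality with (\ref{projection}) then delivers (\ref{error_estimate}). I anticipate that the subsequent improved-convergence result in Section \ref{sec:improved_convergence} is obtained by specializing the fluxes (e.g.\ to the upwind choice (\ref{upwind_flux}), for which $\widehat{\eta}_{w_1}=\eta_{w_1}^+=0$ and $\widetilde{\eta}_{w_2}=\eta_{w_2}^-=0$), thereby eliminating $\Lambda_5$ entirely and avoiding the inverse trace estimate that costs an $h^{1/2}$ (or full $h$) factor here.
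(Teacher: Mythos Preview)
Your proposal is correct and follows essentially the same line as the paper: derive the error-energy identity (\ref{dxi_dt}), bound each $\Lambda_i$, and close with Gronwall; your handling of $\Lambda_1$, $\Lambda_2$, and especially $\Lambda_5$ (where you correctly pinpoint the inverse trace estimate as the source of the lost half-power) matches the paper's proof, as does your anticipation of how upwind fluxes kill $\Lambda_5$ in Theorem~\ref{truth3}.

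One small technical point on $\Lambda_3,\Lambda_4$: the paper reverses your order of operations. Rather than invoking the second part of Assumption~\ref{assump} to pass from the quadrature sum to an integral and \emph{then} applying the Lipschitz bound, it applies the Taylor/Lipschitz bound from (\ref{N_proper}) \emph{pointwise at each node} $x_{k,j}$ first, obtaining a quadrature sum of the form $\sum_{j,k}\underline{\omega}_{k,j}\,(|e_{w_1}|+|e_{w_2}|)\,|\xi_{w_1}|$, and only afterwards uses the exactness clause of Assumption~\ref{assump} (that $\sum_k\underline{\omega}_{k,j}\phi^2=\int_{I_j}\phi^2$ for $\phi\in\mathcal{P}^q$) to turn the $\xi$-contributions into $L^2$ norms. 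Your stated route is slightly awkward because the approximation clause of Assumption~\ref{assump} requires $f\in H^{q+1}(I)$, and the piece $z_i(w_1^h,w_2^h)$ is only piecewise smooth; swapping the order as the paper does sidesteps this, and the final bound $|\Lambda_3|+|\Lambda_4|\le C\bigl(h^{2(q+1)}+\|\xi_{w_1}\|_{L^2}^2+\|\xi_{w_2}\|_{L^2}^2\bigr)$ comes out the same.
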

\begin{proof}
We first estimate the volume integral $\Lambda_1$. Based on the property (\ref{projection}) of the projection operators, we obtain
  \begin{equation}\label{lambda_1}
    |\Lambda_1| \leq Ch^{q+1} \left(\|\xi_{w_1}\|_{L^2(I)} + \|\xi_{w_2}\|_{L^2(I)}\right). 
  \end{equation}
 The same analysis as in the derivation of Theorem \ref{thm1} leads us to
  \begin{equation}
      \Lambda_2 \leq 0.
  \end{equation}
  As for the estimation of $\Lambda_5$, from the definition of the projection operators $P_h^{\pm}$ , the numerical fluxes (\ref{flux}) and (\ref{flux_boundary}), and the trace inequality, we obtain
  \begin{equation}
  \Lambda_5 \leq Ch^q\left(\|\xi_{w_1}\|_{L^2(I)} + \|\xi_{w_2}\|_{L^2(I)}\right).
  \end{equation}
  To estimate $\Lambda_3$, we recall the definition of the functions $z_2(\cdot, \cdot)$ in (\ref{g1g2}) to rewrite $\Lambda_3$ as
  \[\Lambda_3 = -\sum_{j,k}\underline{\omega}_{k,j} \Big(z_2\big(w_1(x_{k,j}), w_2(x_{k,j})\big) - z_2\big(w_1^h(x_{k,j}), w_2^h(x_{k,j})\big)\Big)\xi_{w_1}(x_{k,j}),\]
  then invoke the Taylor expansion and expand $z_2(w_1^h(x_{k,j}), w_2^h(x_{k,j}))$ around $(w_1(x_{k,j}), w_2(x_{k,j}))$, we obtain
  \begin{multline*}
  \Lambda_3 = -\sum_{j,k}\underline{\omega}_{k,j} \bigg(\frac{\partial z_2}{\partial w_1}\Big(w_1(x_{k,j}) + \theta_1 e_{w_1}(x_{k,j}), w_2(x_{k,j})\Big)e_{w_1}(x_{k,j}) \\+ \frac{\partial z_2}{\partial w_2}\Big(w_1(x_{k,j}), w_2(x_{k,j}) + \theta_2 e_{w_2}(x_{k,j})\Big)e_{w_2}(x_{k,j})\bigg) \xi_{w_1}(x_{k,j}),
\end{multline*}
where $-1 < \theta_1, \theta_2 < 0$. Now, combing Young's inequality and the condition (\ref{N_proper}) leads to
\begin{equation}\label{lambda_32}
 |\Lambda_3| \leq C\left(h^{2(q+1)} + \|\xi_{w_1}\|_{L^2(I)}^2 + \|\xi_{w_2}\|_{L^2(I)}^2 \right).
\end{equation}
Similarly, we can rewrite $\Lambda_4$ regarding to $z_1(\cdot, \cdot)$ as
\[\Lambda_4 = \sum_{j,k}\underline{\omega}_{k,j} \Big(z_1\big(w_1(x_{k,j}), w_2(x_{k,j})\big) - z_1\big(w_1^h(x_{k,j}), w_2^h(x_{k,j})\big)\Big)\xi_{w_2}(x_{k,j}),\]
then following the same process as the estimate of $\Lambda_3$, we get
\begin{equation}\label{lambda_4}
    |\Lambda_4| \leq C\left(h^{2(q+1)} + \|\xi_{w_1}\|_{L^2(I)}^2 + \|\xi_{w_2}\|_{L^2(I)}^2\right).
\end{equation}
Next, plugging (\ref{lambda_1}) -- (\ref{lambda_4}) into (\ref{dxi_dt}), invoking Young's equality and integrating the resulting equality from $0$ to $T$ yields
\begin{multline}\label{inequality_at_T}
\|\xi_{w_1}(x,T)\|_{L^2(I)}^2 + \|\xi_{w_2}(x,T)\|_{L^2(I)}^2 \leq \|\xi_{w_1}(x,0)\|_{L^2(I)}^2 + \|\xi_{w_2}(x,0)\|_{L^2(I)}^2\\
+ C\int_0^T  \|\xi_{w_1}\|_{L^2(I)}^2 +\|\xi_{w_2}\|_{L^2(I)}^2 \ dt +  Ch^{2q}.
\end{multline}
Finally, applying Gronwall's inequality to (\ref{inequality_at_T}) and using the fact that the initial projection errors $\xi_{w_1}(x,0) = \xi_{w_2}(x, 0) = 0$ gives rise to
\[\|\xi_{w_1}(x,T)\|_{L^2(I)}^2 + \|\xi_{w_2}(x,T)\|_{L^2(I)}^2 \leq \mathcal{C}h^{2q},\]
and this collects the error estimate (\ref{error_estimate}) thanks to the triangle inequality and the property (\ref{projection}) of Gauss–Radau projection.
  \end{proof}
  \begin{remark}
  While the theorem is stated for the case of physical boundary conditions, we note that a similar conclusion holds for some other types of boundary conditions like periodic boundary conditions. In the latter case, some improvement can also be made, as shown in the next section.
  \end{remark}

\subsection{Improved estimate with the upwind fluxes (\ref{upwind_flux})}\label{sec:improved_convergence}
In this section, we assume periodic boundary conditions are imposed and the numerical fluxes are chosen to be the upwind fluxes defined in (\ref{upwind_flux}). Then we have the following error estimate.

 \begin{truth}\label{truth3}
  Let $(w_1, w_2)$ be a smooth solution of system (\ref{system_characteristic}) with periodic boundary conditions, and $(w_1^h, w_2^h) \in V_h^q\times V_h^q, q \geq 0$ be the numerical solution of the DG scheme (\ref{DG1})-(\ref{DG2}) with the upwind fluxes defined in (\ref{upwind_flux}), and the nonlinear interaction $N(\cdot,\cdot)$ satisfies the condition (\ref{N_proper}). Further, the approximation of the initial data is computed based on (\ref{initial_projection}). Then there exists a positive constant $\mathcal{C}$ depending only on the degree $q$, the bound given in (\ref{N_proper}) and the shape regularity of the mesh, but independent of $h$, such that
  \begin{equation}\label{error_estimate1}
  \|e_{w_1}(x, T)\|_{L^2(I)} + \|e_{w_2}(x, T)\|_{L^2(I)} \leq \mathcal{C}h^{q+1}.
  \end{equation}
  \end{truth}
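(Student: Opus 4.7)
The plan is to follow the same framework laid out in the proof of Theorem \ref{truth2}: decompose $e_{w_i} = \eta_{w_i} + \xi_{w_i}$ using the Gauss--Radau projections ($P_h^+$ for $w_1$ and $P_h^-$ for $w_2$), use the initialization (\ref{initial_projection}) so that $\xi_{w_1}(\cdot,0) = \xi_{w_2}(\cdot,0) = 0$, derive the error equations (\ref{error_eq1})--(\ref{error_eq2}), test with $\phi = \xi_{w_1}$, $\psi = \xi_{w_2}$, sum over elements to obtain the identity (\ref{dxi_dt}), and bound the five pieces $\Lambda_1,\dots,\Lambda_5$. Periodic boundary conditions remove the physical boundary contributions that were present in Theorem \ref{truth2}. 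The entire improvement from $h^q$ to $h^{q+1}$ will come from the alignment between the upwind fluxes (\ref{upwind_flux}) and the one-sided Gauss--Radau projections, which allows the single term $\Lambda_5$ that previously lost a power of $h$ to vanish identically.

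The key step I expect to carry most of the weight is showing $\Lambda_5 = 0$ exactly. For the volume contribution I will note that $\xi_{w_1,x}, \xi_{w_2,x} \in \mathcal{P}^{q-1}(I_j)$ when $q\geq 1$ (and vanish identically when $q=0$), so the orthogonality in the definition of $P_h^\pm$ forces $\int_{I_j}\eta_{w_1}\xi_{w_1,x}\,dx = \int_{I_j}\eta_{w_2}\xi_{w_2,x}\,dx = 0$. For the interface contribution the upwind choice $\widehat{w_1^h} = w_1^{h,+}$ gives $\widehat{\eta}_{w_1} = \eta_{w_1}^+$ at every interface, and the defining property $P_h^+ w_1(x_{j-\frac{1}{2}}^+) = w_1(x_{j-\frac{1}{2}})$ forces $\eta_{w_1}^+ = 0$ at every left element endpoint; by the mirror argument $\widetilde{\eta}_{w_2} = \eta_{w_2}^-$ vanishes at every right endpoint via $P_h^-$. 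Periodicity handles the wrap-around interface identically to all other interfaces, so every term in $\Lambda_5$ cancels separately and $\Lambda_5 = 0$.

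With $\Lambda_5 = 0$ in hand, the remaining terms follow the template of Theorem \ref{truth2}: $\Lambda_2 \leq 0$ by the same energy manipulation used in Theorem \ref{thm1} (now producing a nonpositive interface term from the upwind fluxes instead of being merely zero); $\Lambda_1$ is bounded by $Ch^{q+1}(\|\xi_{w_1}\|_{L^2(I)} + \|\xi_{w_2}\|_{L^2(I)})$ via the $L^2$ projection estimate in (\ref{projection}); and the nonlinear contributions $\Lambda_3, \Lambda_4$ are controlled via Taylor expansion of $z_1, z_2$ around the exact solution, the uniform Lipschitz bound (\ref{N_proper}), the quadrature identity $\sum_k \underline{\omega}_{k,j}\xi^2(x_{k,j}) = \|\xi\|_{L^2(I_j)}^2$ from Assumption \ref{assump}, and the pointwise smallness $|\eta_{w_i}(x_{k,j})| \leq Ch^{q+1}$ coming from the projection bound for smooth $u$. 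Young's inequality, integration in $t$, and Gronwall then yield $\|\xi_{w_1}(\cdot,T)\|_{L^2(I)}^2 + \|\xi_{w_2}(\cdot,T)\|_{L^2(I)}^2 \leq Ch^{2(q+1)}$, and the triangle inequality combined with $\|\eta_{w_i}\|_{L^2(I)} \leq Ch^{q+1}$ closes the estimate (\ref{error_estimate1}).

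The main obstacle is not technical difficulty but rather bookkeeping: I must verify that each of the four interface terms in $\Lambda_5$ vanishes individually under the matched upwind/Gauss--Radau pairing, and that periodic wrap-around does not secretly reintroduce any one of them. Missing a single boundary term would reduce the convergence gain back to the generic $h^q$ rate of Theorem \ref{truth2}, so the care lies in the directional matching between the sign of the characteristic speed in each equation of (\ref{system_characteristic}), the ``side'' chosen by the upwind flux, and the endpoint pinned by the one-sided projection.
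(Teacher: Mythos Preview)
Your proposal is correct and matches the paper's own proof essentially step for step: reuse the error identity (\ref{dxi_dt}) with $\Lambda_1,\dots,\Lambda_5$, bound $|\Lambda_1|+\Lambda_2+|\Lambda_3|+|\Lambda_4|$ by $C(h^{2(q+1)}+\|\xi_{w_1}\|_{L^2(I)}^2+\|\xi_{w_2}\|_{L^2(I)}^2)$ as in Theorem~\ref{truth2}, and then exploit the matching of the upwind fluxes with the Gauss--Radau projections (plus periodicity) to get $\Lambda_5=0$ exactly, after which Gronwall and the triangle inequality finish. Your write-up is in fact more explicit than the paper's about why each volume and interface term in $\Lambda_5$ vanishes, but the argument is the same.
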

  \begin{proof}
  Following the same analysis as in the derivation of Theorem \ref{truth2}, we have
   \begin{equation}\label{dxi_dt1}
     \frac{1}{2} \frac{d}{dt} \sum_j \int_{I_j} \xi_{w_1}^2 + \xi_{w_2}^2\ dx = \Lambda_1 + \Lambda_2 + \Lambda_3 + \Lambda_4 + \Lambda_5,
  \end{equation}
  where $\Lambda_i, i = 1,2,\cdots, 5$ are defined in (\ref{lambda}). Similarly, we can obtain
  \begin{equation}\label{estimate_of_lambda}
      |\Lambda_1| + \Lambda_2 + |\Lambda_3| + |\Lambda_4| \leq C\left(h^{2(q+1)} + \|\xi_{w_1}\|^2_{L^2(I)} + \|\xi_{w_2}\|^2_{L^2(I)}\right).
  \end{equation}
  As for the estimate of $\Lambda_5$, based on the property of the projection operator (\ref{projection}) and the definition of the upwind fluxes (\ref{upwind_flux}), we have
  \[\Lambda_5 = 0.\]
  Thus we obtain
  \[\frac{1}{2} \frac{d}{dt} \sum_j \int_{I_j} \xi_{w_1}^2 + \xi_{w_2}^2\ dx \leq C\left(h^{2(q+1)} + \|\xi_{w_1}\|^2_{L^2(I)} + \|\xi_{w_2}\|^2_{L^2(I)}\right),\]
  which leads to 
  \[\|\xi_{w_1}(x,T)\|_{L^2(I)}^2 + \|\xi_{w_2}(x,T)\|_{L^2(I)}^2 \leq \mathcal{C}h^{2(q+1)},\]
  by invoking the initial projection errors $\xi_{w_1}(x,0) = \xi_{w_2}(x, 0) = 0$. Finally, (\ref{error_estimate1}) follows by an application of triangle inequality in $e_{w_1} = \xi_{w_1} + \eta_{w_1}$ and $e_{w_2} = \xi_{w_2} + \eta_{w_2}$.
  \end{proof}
 \begin{remark}
 For the error analysis, we see that the optimal convergence order is attained when the upwind fluxes with $\alpha_1 = \alpha_2 = \beta_1 = \beta_2 = 0$ in (\ref{flux}) and periodic boundary conditions are considered. For other cases, we can only prove a sub-optimal convergence, but numerically we observe optimal convergence rates for the mixed upwind flux (\ref{mu_flux}) with $\alpha_1 = \alpha_2 = 0, \beta_1 = \beta_2 = 1$ in (\ref{flux}) and the mixed central flux (\ref{mc_flux}) with $\alpha_1 = \alpha_2 = \beta_1 = \beta_2 = 1$ in (\ref{flux}), subject to either periodic boundary conditions (see Table \ref{example_one_l2_mc} and Table \ref{example_one_l2_mu}) or homogeneous Dirichlet boundary conditions (see Table \ref{example_two_l2_mc} and Table \ref{example_two_l2_mu}). For the central fluxes with $\alpha_1 = \alpha_2 = 1, \beta_1 = \beta_2 = 0$, we observe a super-convergence order $q + 2$ when $q$ is even; while a sub-optimal convergence order $q$ when $q$ is odd (see Table \ref{example_one_l2_central} and Table \ref{example_two_l2_central}). 
 \end{remark}

\section{Numerical experiments}\label{sec:simulations}
In this section, we present some numerical experiments to illustrate the performance of the proposed DG scheme in Section \ref{Sec: DG_formulation}. 
As mentioned in the introduction, we are interested in the potential of the current numerical scheme to study the nonlinear stability of kink solutions. For this reason, in what follows only such solutions will be simulated. A long-time simulation in the absence of any perturbation will set a stage for future simulation of the perturbed kink-like solutions. Through these studies, we assume that the computational domain is
discretized by a uniform grid $x_{\frac{1}{2}},\cdots,x_{j-\frac{1}{2}},x_{j+\frac{1}{2}},\cdots x_{N+\frac{1}{2}}$ with spacing $h$. Let $x_{j}=(x_{j-\frac{1}{2}}+x_{j+\frac{1}{2}})/2$; then the mapping $r=\frac{2}{h}(x-x_j)$ takes element $I_j$ to the reference element $I_r=(-1,1)$. Moreover, we use standard modal basis formulations,
\[w_1^h(x, t) = \sum_{n = 0}^q \overline{w}_{1n}(t)\phi_n(r), \quad w_2^h(x, t) = \sum_{n = 0}^q \overline{w}_{2n}(t)\psi_n(r)\]
to expand $w_1^h$ and $w_2^h$ in test functions. In addition, for the numerical experiments conducted in this work, we present results by considering the degree of the approximation DG space being $q = (0,1,2,3)$. With such choices, we simply use tensor-product Gauss rules with $17$ nodes in the reference element $I_r$ for the calculation of the nonlinear volume integrals in (\ref{DG1})--(\ref{DG2}) without bothering to pick
the minimal number of nodes required to observe the convergence rates shown in the examples of Section \ref{sec:convergence_study}. Last, we use normalized Legendre polynomials \cite{hesthaven2007nodal} as the test and trial functions.

\subsection{Convergence rates}\label{sec:convergence_study}
We begin to test the order of convergence of the proposed DG scheme (\ref{DG1})--(\ref{DG2}). In particular, we consider four different numerical fluxes:
\begin{align}
    \mbox{upwind flux}: & \quad \widehat{w_1^h} = w_1^{h, +}, \quad \widetilde{w_2^h} = w_2^{h, -}; \label{u_flux}\\
    \mbox{central flux}: & \quad \widehat{w_1^h} = \{w_1^{h}\}, \quad \widetilde{w_2^h} = \{w_2^{h}\}; \label{c_flux}\\
    \mbox{mixed upwind flux}: & \quad \widehat{w_1^h} = w_1^{h, +} + \frac{1}{2}[w_2^h], \quad \widetilde{w_2^h} = w_2^{h, -} + \frac{1}{2}[w_1^h]; \label{mu_flux}\\
    \mbox{mixed central flux}: & \quad \widehat{w_1^h} = \{w_1^{h}\} + \frac{1}{2}[w_2^h], \quad \widetilde{w_2^h} = \{w_2^{h}\} + \frac{1}{2}[w_1^h]. \label{mc_flux}
\end{align}

In addition, to numerically evolve the solution $w_1^h$ and $w_2^h$ in (\ref{DG1})--(\ref{DG2}), we implement the classical $4$-stages explicit Rung-Kutta time integrator for the experiments in Section \ref{sec:convergence_study}. To observe the desired convergence rate for spatial discretization, we use a time step size 
\begin{equation*}
\Delta_t = \mbox{CFL} \times h,\quad \mbox{CFL} = \frac{3.75\times 10^{-2}}{\pi}
\end{equation*}
to guarantee that the spatial error dominates the temporal error. 

\subsubsection{A test problem with periodic boundary conditions}\label{sec:example1}
For the first example, we test a problem with periodic boundary conditions. Consider the manufactured solution
\begin{equation}\label{example1_sol}
    w_1(x,t) = \frac{1}{\sqrt{2}}(\cos(\pi x)+\sin(\pi x)) \cos(t),\quad w_2(x,t) = \frac{1}{\sqrt{2}}(\cos(\pi x) - \sin(\pi x))\cos(t).
\end{equation}
for the following first-order nonlinear hyperbolic system 
\begin{equation}\label{example1}
\begin{aligned}
\partial_t w_1 &= \ \ \partial_x w_1 -N\left(w_1, w_2\right)w_2 + f_1(x, t), \quad (x,t)\in 
(x_a,x_b) \times(0,T),\\
\partial_t w_2 &= -\partial_x w_2 + N\left(w_1, w_2\right)w_1 + f_2(x, t), \quad (x,t)\in 
(x_a,x_b) \times(0,T),
\end{aligned}
\end{equation}
with $(x_a,x_b)=(-2,2)$, $T=1$. For the nonlinear term, $N(w_1, w_2)$, we consider a
nonlinearity proposed in \cite{liweinstein23},
\begin{equation}\label{nonlinear_term_numer}
N(w_1, w_2) = 2\mbox{sech}\left(\mbox{arccosh}(2)
\sqrt{w_1^2 + w_2^2}
\right) - 1,
\end{equation}
which clearly satisfies the condition $(\ref{N_cond})$. In Figure \ref{fig:nonliner_interaction}, we present the picture of $N(w_1, w_2)$ with respect to the modulus $y$, and see that $N(w_1, w_2)\in[-1, 1]$. In particular, $N(w_1, w_2) = 1$ when $\sqrt{w_1^2 + w_2^2}= 0$, and $N(w_1, w_2) \rightarrow -1$ when 
$\sqrt{w_1^2 + w_2^2}\rightarrow \infty$. Moreover, direct calculation with the help of MATLAB shows that
\[\max_{w_1,w_2} \left|\frac{\partial}{\partial w_j}\Big(w_i N(w_1,w_2)\Big)\right| \leq 2\]
for $i,j=1,2$ so that the assumption (\ref{N_proper}) is satisfied. Moreover, the initial data, external forcing $f_1(x,t), f_2(x,t)$ are determined based on (\ref{example1_sol})--(\ref{example1}).
\begin{figure}[htb!]
	\centering
	\includegraphics[width=0.5\linewidth,trim={.1cm 0.1cm .5cm .5cm},clip]{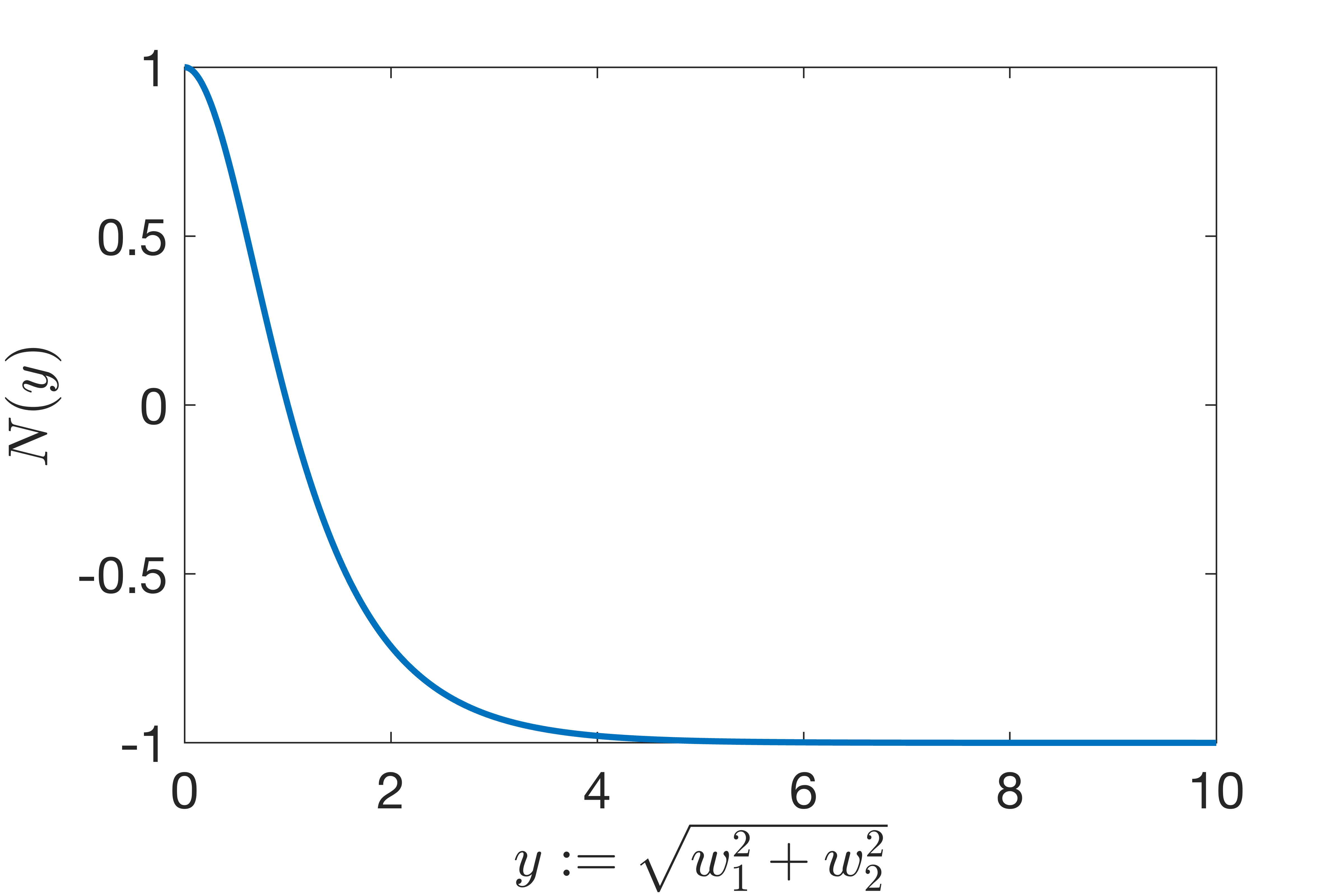}
\caption{We present the picture of $N(w_1, w_2)$ with respect to the modulus $y = \sqrt{w_1^2+w_2^2}$.}\label{fig:nonliner_interaction}
\end{figure}



From Table \ref{example_one_l2_up} to Table \ref{example_one_l2_mu}, we list $L^2$ errors in $w_1, w_2, b_1, b_2$, and the corresponding numerical orders of accuracy subject to the variation of $q$ and $N$ at final time $T = 1$ with the numerical fluxes given from (\ref{u_flux}) to (\ref{mc_flux}). Note that the DG scheme (\ref{DG1})--(\ref{DG2}) is developed for the characteristic variables $w_1, w_2$; for completeness, we also include the errors in $b_1$ and $b_2$ by performing the linear transformation (\ref{b2w}) to obtain the DG approximations $b_1^h$ and $b_2^h$. In particular, Table \ref{example_one_l2_central} and Table \ref{example_one_l2_mc} present the results with conservative schemes: the central flux (\ref{c_flux}) and the mixed central flux (\ref{mc_flux}), while Table \ref{example_one_l2_up} and Table \ref{example_one_l2_mu} display the results with dissipating schemes: the upwind scheme (\ref{u_flux}) and the mixed upwind scheme (\ref{mu_flux}). 

For the energy dissipating scheme where the upwind flux or the mixed upwind scheme is implemented, from Table \ref{example_one_l2_up} or Table \ref{example_one_l2_mu}, we observe that the proposed scheme consistently gives the optimal $(q + 1)$-th order of accuracy across the choices of the number of elements $N$ for the solutions $w_1, w_2, b_1$ and $b_2$. 

For the energy conserving scheme where the central flux (\ref{c_flux}) is used, from Table \ref{example_one_l2_central}, we note a super-convergence order $q+2$ when $q$ is even, while a sub-optimal convergence rate $q$ when $q$ is odd. Last, for the energy conserving scheme where the mixed central flux (\ref{mc_flux}) is considered, from Table \ref{example_one_l2_mc}, we also observe an optimal convergence order $q+1$ though there are fluctuations, which is common for a conservative scheme since the initial error cannot be quickly damped.

\begin{table}
 	\footnotesize
 	\begin{center}
 		\scalebox{1.0}{
 			\begin{tabular}{c c c c c c c c c c c c c c c}
 				\hline
 				~ & ~ &  $w_1$ &~ & $w_2$& ~ & $b_1$ & ~ & $b_2$ & ~\\
 				\cline{3-10}
 				$q$ & $N$ & $L^2$ error & order   & $L^2$ error & order  & $L^2$ error & order & $L^2$ error & order \\
 				\hline
 				0 & 40 & 2.0618e-01 & -- & 2.2555e-01 & -- & 2.7714e-01 & -- & 3.3160e-01 & --  \\
~ & 80 & 1.2388e-01 & 0.7349 & 1.2250e-01 & 0.8807 & 1.5022e-01 & 0.8835 & 1.9529e-01 & 0.7638  \\
~ & 160 & 6.8560e-02 & 0.8536 & 6.3772e-02 & 0.9417 & 7.9408e-02 & 0.9197 & 1.0597e-01 & 0.8820 \\
~ & 320 & 3.6141e-02 & 0.9237 & 3.2518e-02 & 0.9717 & 4.1012e-02 & 0.9533 & 5.5183e-02 & 0.9413 \\
~ & 640 & 1.8563e-02 & 0.9612 & 1.6416e-02 & 0.9861 & 2.0866e-02 & 0.9749 & 2.8156e-02 & 0.9708 \\
 				~ & ~ & ~ & ~ & ~ & ~ & ~ & ~ & ~ & ~\\
 				1 & 40 & 2.7426e-03 & -- & 2.7172e-03 & -- & 4.0346e-03 & -- & 3.6786e-03 & --  \\
~ & 80 & 6.5702e-04 & 2.0616 & 6.5526e-04 & 2.0520 & 9.5004e-04 & 2.0864 & 9.0527e-04 & 2.0228 \\
~ & 160 & 1.6186e-04 & 2.0212 & 1.6174e-04 & 2.0184 & 2.3167e-04 & 2.0359 & 2.2594e-04 & 2.0024 \\
~ & 320 & 4.0247e-05 & 2.0078 & 4.0240e-05 & 2.0070 & 5.7276e-05 & 2.0160 & 5.6548e-05 & 1.9984 \\
~ & 640 & 1.0040e-05 & 2.0031 & 1.0040e-05 & 2.0029 & 1.4244e-05 & 2.0075 & 1.4153e-05 & 1.9984 \\
 			    ~ & ~ & ~ & ~ & ~ & ~ & ~ & ~ & ~ & ~\\
 				2 & 40 & 6.3391e-05 & -- & 6.3341e-05 & -- & 9.3036e-05 & -- & 8.6053e-05 & --  \\
~ & 80 & 7.8644e-06 & 3.0109 & 7.8629e-06 & 3.0100 & 1.1336e-05 & 3.0368 & 1.0901e-05 & 2.9807 \\
~ & 160 & 9.7931e-07 & 3.0055 & 9.7926e-07 & 3.0053 & 1.3984e-06 & 3.0191 & 1.3713e-06 & 2.9909 \\
~ & 320 & 1.2218e-07 & 3.0028 & 1.2218e-07 & 3.0027 & 1.7363e-07 & 3.0097 & 1.7194e-07 & 2.9956 \\
~ & 640 & 1.5257e-08 & 3.0014 & 1.5257e-08 & 3.0014 & 2.1630e-08 & 3.0049 & 2.1524e-08 & 2.9978 \\
 				~ & ~ & ~ & ~ & ~ & ~ & ~ & ~ & ~ & ~\\
 				3 & 40 & 1.1951e-06 & -- & 1.1960e-06 & -- & 1.7589e-06 & -- & 1.6197e-06 & --  \\
~ & 80 & 7.4395e-08 & 4.0058 & 7.4413e-08 & 4.0065 & 1.0684e-07 & 4.0411 & 1.0358e-07 & 3.9669 \\
~ & 160 & 4.6378e-09 & 4.0037 & 4.6381e-09 & 4.0040 & 6.6103e-09 & 4.0146 & 6.5074e-09 & 3.9925 \\
~ & 320 & 2.8948e-10 & 4.0019 & 2.8948e-10 & 4.0020 & 4.1099e-10 & 4.0075 & 4.0777e-10 & 3.9962 \\
~ & 640 & 1.8080e-11 & 4.0009 & 1.8080e-11 & 4.0010 & 2.5619e-11 & 4.0038 & 2.5520e-11 & 3.9981 \\
 				\hline
 			\end{tabular}
 		}
 	\end{center}
 	\caption{\scriptsize{$L^2$ errors and the corresponding convergence rates for $w_1, w_2, b_1, b_2$ of problem (\ref{example1}) using $\mathcal{P}^q$ polynomials and the upwind flux (\ref{u_flux}).  The interval is divided into $N$ uniform cells, and the terminal computational time is $T = 1$.}}\label{example_one_l2_up}
 \end{table}
 
 \begin{table}
 	\footnotesize
 	\begin{center}
 		\scalebox{1.0}{
 			\begin{tabular}{c c c c c c c c c c c c c c c}
 				\hline
 				~ & ~ &  $w_1$ &~ & $w_2$& ~ & $b_1$ & ~ & $b_2$ & ~\\
 				\cline{3-10}
 				$q$ & $N$ & $L^2$ error & order   & $L^2$ error & order  & $L^2$ error & order & $L^2$ error & order \\
 				\hline
 				0 & 40 & 1.7427e-02 & -- & 2.7306e-02 & -- & 3.8033e-02 & -- & 2.5536e-02 & --  \\
~ & 80 & 4.3580e-03 & 1.9996 & 6.7582e-03 & 2.0145 & 9.5417e-03 & 1.9949 & 6.1877e-03 & 2.0450 \\
~ & 160 & 1.0896e-03 & 1.9999 & 1.6855e-03 & 2.0035 & 2.3878e-03 & 1.9985 & 1.5344e-03 & 2.0117 \\
~ & 320 & 2.7240e-04 & 2.0000 & 4.2112e-04 & 2.0009 & 5.9712e-04 & 1.9996 & 3.8281e-04 & 2.0030 \\
~ & 640 & 6.8101e-05 & 2.0000 & 1.0526e-04 & 2.0002 & 1.4929e-04 & 1.9999 & 9.5654e-05 & 2.0007 \\
 				~ & ~ & ~ & ~ & ~ & ~ & ~ & ~ & ~ & ~\\
 				1 & 40 & 4.6923e-02 & -- & 4.9988e-02 & -- & 6.8179e-02 & -- & 6.8940e-02 & --  \\
~ & 80 & 2.3545e-02 & 0.9948 & 2.5185e-02 & 0.9890 & 3.2583e-02 & 1.0652 & 3.6272e-02 & 0.9265 \\
~ & 160 & 1.1781e-02 & 0.9989 & 1.2614e-02 & 0.9975 & 1.6092e-02 & 1.0178 & 1.8354e-02 & 0.9827 \\
~ & 320 & 5.8917e-03 & 0.9997 & 6.3097e-03 & 0.9994 & 8.0206e-03 & 1.0046 & 9.2044e-03 & 0.9957 \\
~ & 640 & 2.9460e-03 & 0.9999 & 3.1552e-03 & 0.9998 & 4.0071e-03 & 1.0011 & 4.6056e-03 & 0.9989 \\
 			    ~ & ~ & ~ & ~ & ~ & ~ & ~ & ~ & ~ & ~\\
 				2 & 40 & 1.7123e-05 & -- & 1.6677e-05 & -- & 2.6737e-05 & -- & 2.0682e-05 & --  \\
~ & 80 & 6.0194e-07 & 4.8301 & 5.7921e-07 & 4.8476 & 8.7872e-07 & 4.9273 & 7.8961e-07 & 4.7111 \\
~ & 160 & 5.4792e-08 & 3.4576 & 5.3447e-08 & 3.4379 & 7.7657e-08 & 3.5002 & 7.5410e-08 & 3.3883 \\
~ & 320 & 2.3318e-09 & 4.5545 & 2.2482e-09 & 4.5713 & 3.2600e-09 & 4.5742 & 3.2180e-09 & 4.5505 \\
~ & 640 & 2.4289e-10 & 3.2631 & 2.3752e-10 & 3.2426 & 3.4132e-10 & 3.2556 & 3.3810e-10 & 3.2506 \\
 				~ & ~ & ~ & ~ & ~ & ~ & ~ & ~ & ~ & ~\\
 				3 & 40 & 2.1038e-05 & -- & 2.0260e-05 & -- & 3.7819e-05 & -- & 1.6610e-05 & --  \\
~ & 80 & 2.7425e-06 & 2.9394 & 2.7826e-06 & 2.8641 & 4.1628e-06 & 3.1835 & 3.6332e-06 & 2.1927 \\
~ & 160 & 3.4322e-07 & 2.9983 & 3.5234e-07 & 2.9814 & 4.8917e-07 & 3.0891 & 4.9456e-07 & 2.8770 \\
~ & 320 & 4.2905e-08 & 2.9999 & 4.4170e-08 & 2.9958 & 6.0111e-08 & 3.0246 & 6.3010e-08 & 2.9725 \\
~ & 640 & 5.3632e-09 & 3.0000 & 5.5252e-09 & 2.9990 & 7.4811e-09 & 3.0063 & 7.9130e-09 & 2.9933 \\
 				\hline
 			\end{tabular}
 		}
 	\end{center}
 	\caption{\scriptsize{$L^2$ errors and the corresponding convergence rates for $w_1, w_2, b_1, b_2$ of problem (\ref{example1}) using $\mathcal{P}^q$ polynomials and the central flux (\ref{c_flux}).  The interval is divided into $N$ uniform cells, and the terminal computational time is $T = 1$.}}\label{example_one_l2_central}
 \end{table}
 
 \begin{table}
 	\footnotesize
 	\begin{center}
 		\scalebox{1.0}{
 			\begin{tabular}{c c c c c c c c c c c c c c c}
 				\hline
 				~ & ~ &  $w_1$ &~ & $w_2$& ~ & $b_1$ & ~ & $b_2$ & ~\\
 				\cline{3-10}
 				$q$ & $N$ & $L^2$ error & order   & $L^2$ error & order  & $L^2$ error & order & $L^2$ error & order \\
 				\hline
 				0 & 40 & 3.1939e-01 & -- & 2.4017e-01 & -- & 3.5673e-01 & -- & 4.3832e-01 & --  \\
~ & 80 & 1.5264e-01 & 1.0652 & 1.2336e-01 & 0.9612 & 1.6686e-01 & 1.0961 & 2.2179e-01 & 0.9828 \\
~ & 160 & 7.4617e-02 & 1.0325 & 6.2322e-02 & 0.9850 & 8.0635e-02 & 1.0492 & 1.1136e-01 & 0.9939 \\
~ & 320 & 3.6892e-02 & 1.0162 & 3.1302e-02 & 0.9935 & 3.9637e-02 & 1.0245 & 5.5772e-02 & 0.9976 \\
~ & 640 & 1.8343e-02 & 1.0081 & 1.5684e-02 & 0.9970 & 1.9652e-02 & 1.0122 & 2.7906e-02 & 0.9990 \\
 				~ & ~ & ~ & ~ & ~ & ~ & ~ & ~ & ~ & ~\\
 				1 & 40 & 7.7379e-03 & -- & 6.6755e-03 & -- & 1.0531e-02 & -- & 9.8986e-03 & --  \\
~ & 80 & 9.3795e-04 & 3.0444 & 8.0919e-04 & 3.0443 & 1.2227e-03 & 3.1064 & 1.2546e-03 & 2.9800 \\
~ & 160 & 3.3209e-04 & 1.4979 & 3.1869e-04 & 1.3443 & 4.5389e-04 & 1.4297 & 4.6655e-04 & 1.4271 \\
~ & 320 & 1.1293e-04 & 1.5561 & 1.1180e-04 & 1.5112 & 1.5910e-04 & 1.5125 & 1.5873e-04 & 1.5555 \\
~ & 640 & 1.5939e-05 & 2.8248 & 1.5778e-05 & 2.8250 & 2.2371e-05 & 2.8302 & 2.2483e-05 & 2.8196 \\
 			    ~ & ~ & ~ & ~ & ~ & ~ & ~ & ~ & ~ & ~\\
 				2 & 40 & 1.5944e-04 & -- & 1.2786e-04 & -- & 1.9650e-04 & -- & 2.1196e-04 & --  \\
~ & 80 & 1.0080e-05 & 3.9835 & 8.4746e-06 & 3.9152 & 1.2982e-05 & 3.9199 & 1.3352e-05 & 3.9886 \\
~ & 160 & 1.6531e-06 & 2.6081 & 1.5312e-06 & 2.4685 & 2.2696e-06 & 2.5160 & 2.2369e-06 & 2.5776 \\
~ & 320 & 2.9000e-07 & 2.5111 & 2.8078e-07 & 2.4471 & 4.0386e-07 & 2.4905 & 4.0345e-07 & 2.4710 \\
~ & 640 & 2.0669e-08 & 3.8105 & 2.0258e-08 & 3.7929 & 2.9057e-08 & 3.7969 & 2.8826e-08 & 3.8070 \\
 				~ & ~ & ~ & ~ & ~ & ~ & ~ & ~ & ~ & ~\\
 				3 & 40 & 2.7237e-06 & -- & 2.5996e-06 & -- & 3.5133e-06 & -- & 4.0012e-06 & --  \\
~ & 80 & 1.8335e-07 & 3.8929 & 1.6457e-07 & 3.9815 & 2.5275e-07 & 3.7970 & 2.3983e-07 & 4.0603 \\
~ & 160 & 1.2695e-08 & 3.8522 & 1.2362e-08 & 3.7347 & 1.7513e-08 & 3.8512 & 1.7924e-08 & 3.7421 \\
~ & 320 & 5.4586e-10 & 4.5396 & 5.2741e-10 & 4.5508 & 7.6433e-10 & 4.5181 & 7.5370e-10 & 4.5718 \\
~ & 640 & 4.9108e-11 & 3.4745 & 4.8950e-11 & 3.4296 & 6.9477e-11 & 3.4596 & 6.9199e-11 & 3.4452 \\
 				\hline
 			\end{tabular}
 		}
 	\end{center}
 	\caption{\scriptsize{$L^2$ errors and the corresponding convergence rates for $w_1, w_2, b_1, b_2$ of problem (\ref{example1}) using $\mathcal{P}^q$ polynomials and the mixed central flux (\ref{mc_flux}).  The interval is divided into $N$ uniform cells, and the terminal computational time is $T = 1$.}}\label{example_one_l2_mc}
 \end{table}

 \begin{table}
 	\footnotesize
 	\begin{center}
 		\scalebox{1.0}{
 			\begin{tabular}{c c c c c c c c c c c c c c c}
 				\hline
 				~ & ~ &  $w_1$ &~ & $w_2$& ~ & $b_1$ & ~ & $b_2$ & ~\\
 				\cline{3-10}
 				$q$ & $N$ & $L^2$ error & order   & $L^2$ error & order  & $L^2$ error & order & $L^2$ error & order \\
 				\hline
 				0 & 40 & 2.8770e-01 & -- & 2.4850e-01 & -- & 3.7720e-01 & -- & 3.8311e-01 & --  \\
~ & 80 & 1.6003e-01 & 0.8463 & 1.3862e-01 & 0.8421 & 1.9287e-01 & 0.9677 & 2.2902e-01 & 0.7423 \\
~ & 160 & 8.5314e-02 & 0.9074 & 7.2910e-02 & 0.9270 & 9.7628e-02 & 0.9823 & 1.2513e-01 & 0.8720 \\
~ & 320 & 4.4175e-02 & 0.9496 & 3.7342e-02 & 0.9653 & 4.9162e-02 & 0.9897 & 6.5382e-02 & 0.9365 \\
~ & 640 & 2.2493e-02 & 0.9738 & 1.8891e-02 & 0.9831 & 2.4678e-02 & 0.9943 & 3.3416e-02 & 0.9684 \\
 				~ & ~ & ~ & ~ & ~ & ~ & ~ & ~ & ~ & ~\\
 				1 & 40 & 1.8697e-03 & -- & 1.9552e-03 & -- & 2.6172e-03 & -- & 2.7906e-03 & --  \\
~ & 80 & 4.5582e-04 & 2.0363 & 4.6687e-04 & 2.0662 & 6.4279e-04 & 2.0256 & 6.6205e-04 & 2.0756 \\
~ & 160 & 1.1337e-04 & 2.0074 & 1.1480e-04 & 2.0239 & 1.6021e-04 & 2.0044 & 1.6247e-04 & 2.0268 \\
~ & 320 & 2.8326e-05 & 2.0009 & 2.8507e-05 & 2.0097 & 4.0051e-05 & 2.0001 & 4.0323e-05 & 2.0105 \\
~ & 640 & 7.0829e-06 & 1.9997 & 7.1058e-06 & 2.0043 & 1.0016e-05 & 1.9995 & 1.0050e-05 & 2.0045 \\
 			    ~ & ~ & ~ & ~ & ~ & ~ & ~ & ~ & ~ & ~\\
 				2 & 40 & 9.6659e-05 & -- & 7.9271e-05 & -- & 1.3628e-04 & -- & 1.1261e-04 & --  \\
~ & 80 & 1.1624e-05 & 3.0558 & 1.0528e-05 & 2.9126 & 1.6425e-05 & 3.0526 & 1.4904e-05 & 2.9176 \\
~ & 160 & 1.4181e-06 & 3.0351 & 1.3498e-06 & 2.9634 & 2.0050e-06 & 3.0342 & 1.9094e-06 & 2.9645 \\
~ & 320 & 1.7491e-07 & 3.0193 & 1.7065e-07 & 2.9836 & 2.4734e-07 & 3.0190 & 2.4135e-07 & 2.9839 \\
~ & 640 & 2.1711e-08 & 3.0101 & 2.1446e-08 & 2.9923 & 3.0704e-08 & 3.0100 & 3.0330e-08 & 2.9923 \\
 				~ & ~ & ~ & ~ & ~ & ~ & ~ & ~ & ~ & ~\\
 				3 & 40 & 8.3178e-07 & -- & 9.0957e-07 & -- & 1.1909e-06 & -- & 1.2728e-06 & --  \\
~ & 80 & 5.1621e-08 & 4.0102 & 5.4133e-08 & 4.0706 & 7.3144e-08 & 4.0252 & 7.6421e-08 & 4.0579 \\
~ & 160 & 3.2398e-09 & 3.9940 & 3.3172e-09 & 4.0284 & 4.5815e-09 & 3.9968 & 4.6916e-09 & 4.0258 \\
~ & 320 & 2.0341e-10 & 3.9935 & 2.0582e-10 & 4.0105 & 2.8766e-10 & 3.9934 & 2.9107e-10 & 4.0106 \\
~ & 640 & 1.2744e-11 & 3.9964 & 1.2819e-11 & 4.0050 & 1.8022e-11 & 3.9965 & 1.8130e-11 & 4.0049 \\
 				\hline
 			\end{tabular}
 		}
 	\end{center}
 	\caption{\scriptsize{$L^2$ errors and the corresponding convergence rates for $w_1, w_2, b_1, b_2$ of problem (\ref{example1}) using $\mathcal{P}^q$ polynomials and the mixed upwind flux (\ref{mu_flux}). The interval is divided into $N$ uniform cells, and the terminal computational time is $T = 1$.}}\label{example_one_l2_mu}
 \end{table}
 
 In Figure \ref{fig:errors_eg1}, we report the errors in the solution $w_1$ (left) and $w_2$ (right) with respect to the spatial locations at the final time $T = 1$ with the approximation order $q = 3$, the number of elements $N = 20,40,80$; from top to bottom are the results by using the upwind fluxes (\ref{u_flux}), the central fluxes (\ref{c_flux}), the mixed central fluxes (\ref{mc_flux}), and the mixed upwind fluxes (\ref{mu_flux}), respectively. We note that there is no severe error localization for both solutions $w_1$ and $w_2$ with four different numerical fluxes.
 
 \begin{figure}[htb!]
	\centering
	\includegraphics[width=0.45\linewidth,trim={.0cm .0cm .0cm .0cm},clip]{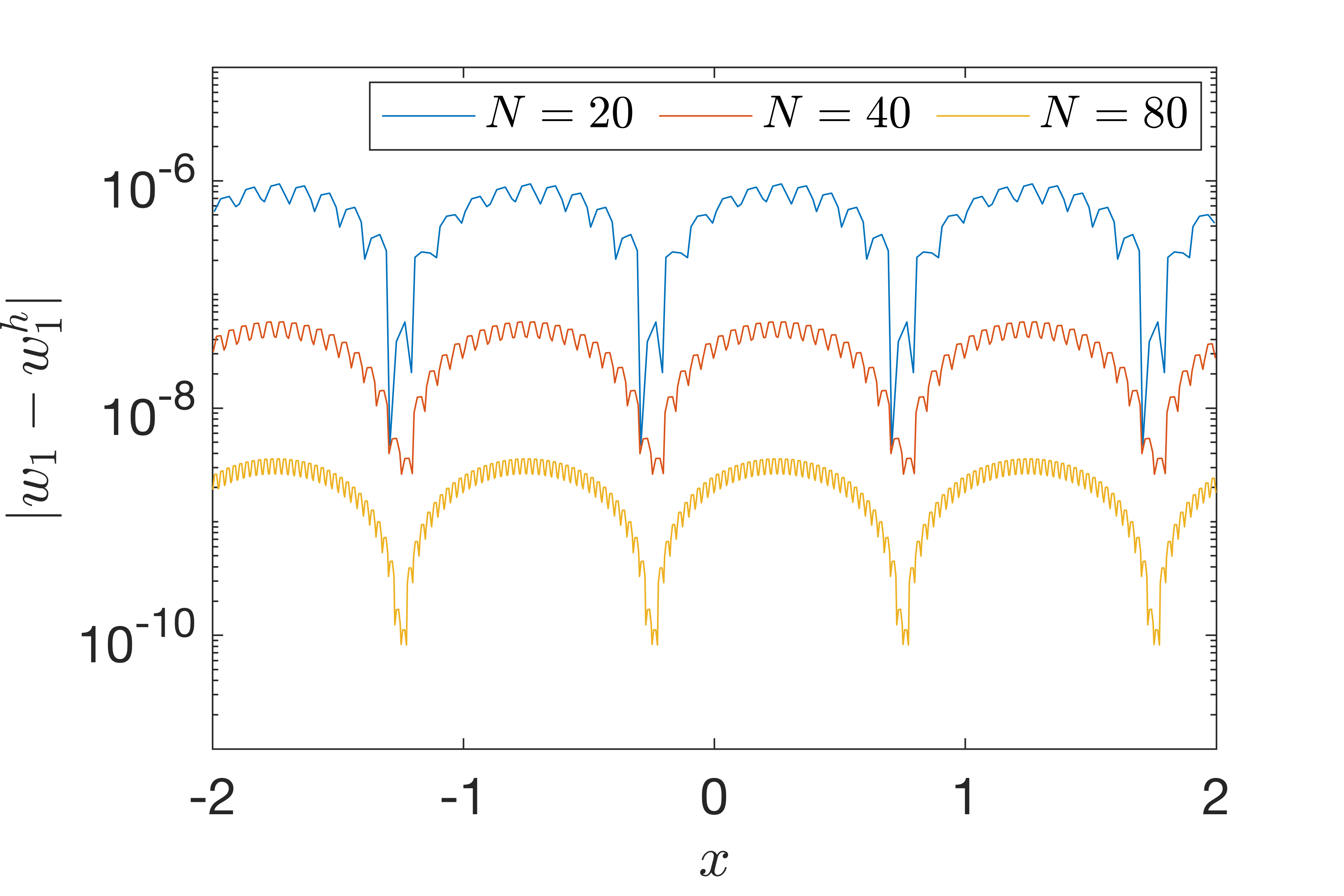}
	\includegraphics[width=0.45\linewidth,trim={.0cm .0cm .0cm .0cm},clip]{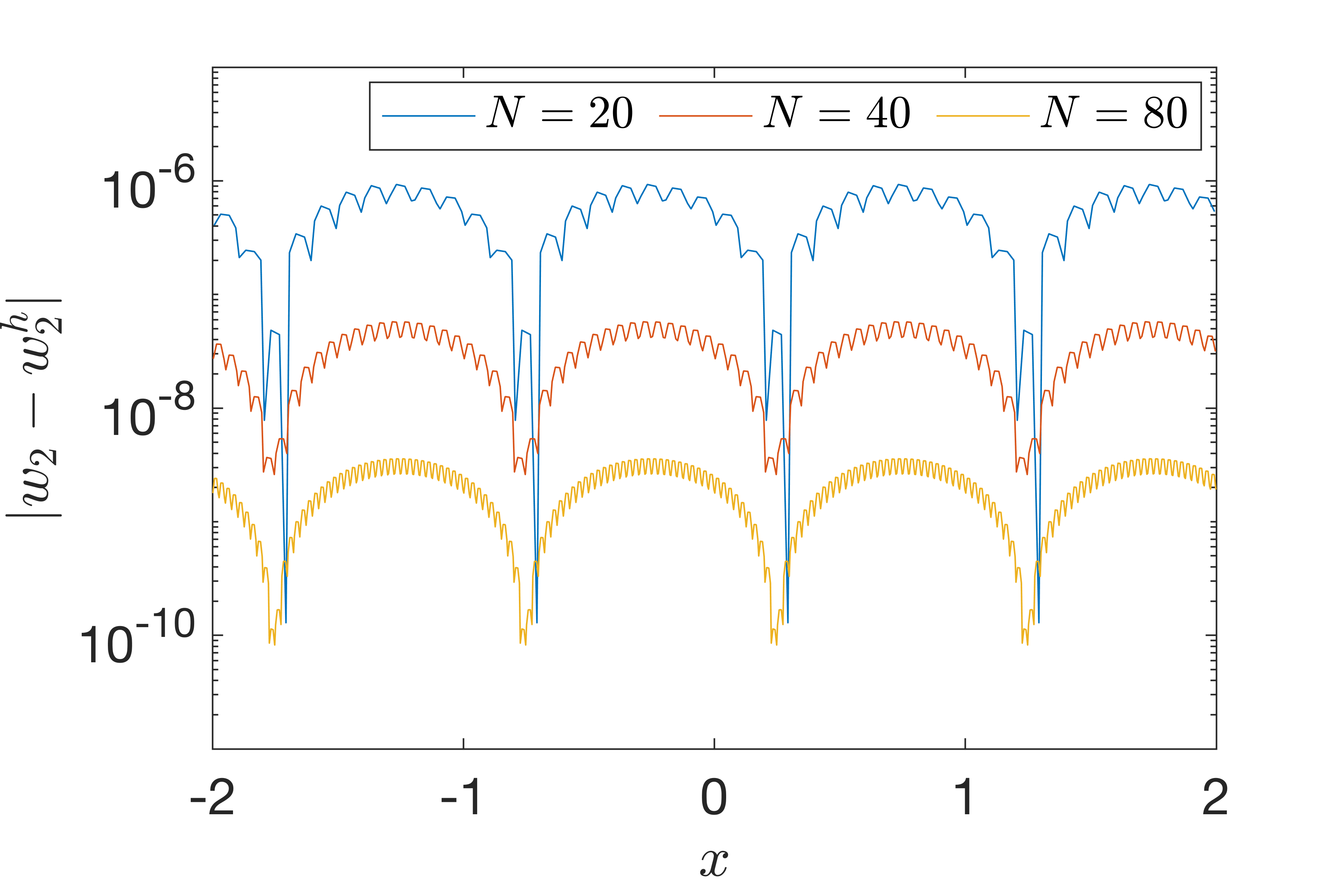}\\
	\includegraphics[width=0.45\linewidth,trim={.0cm .0cm .0cm .0cm},clip]{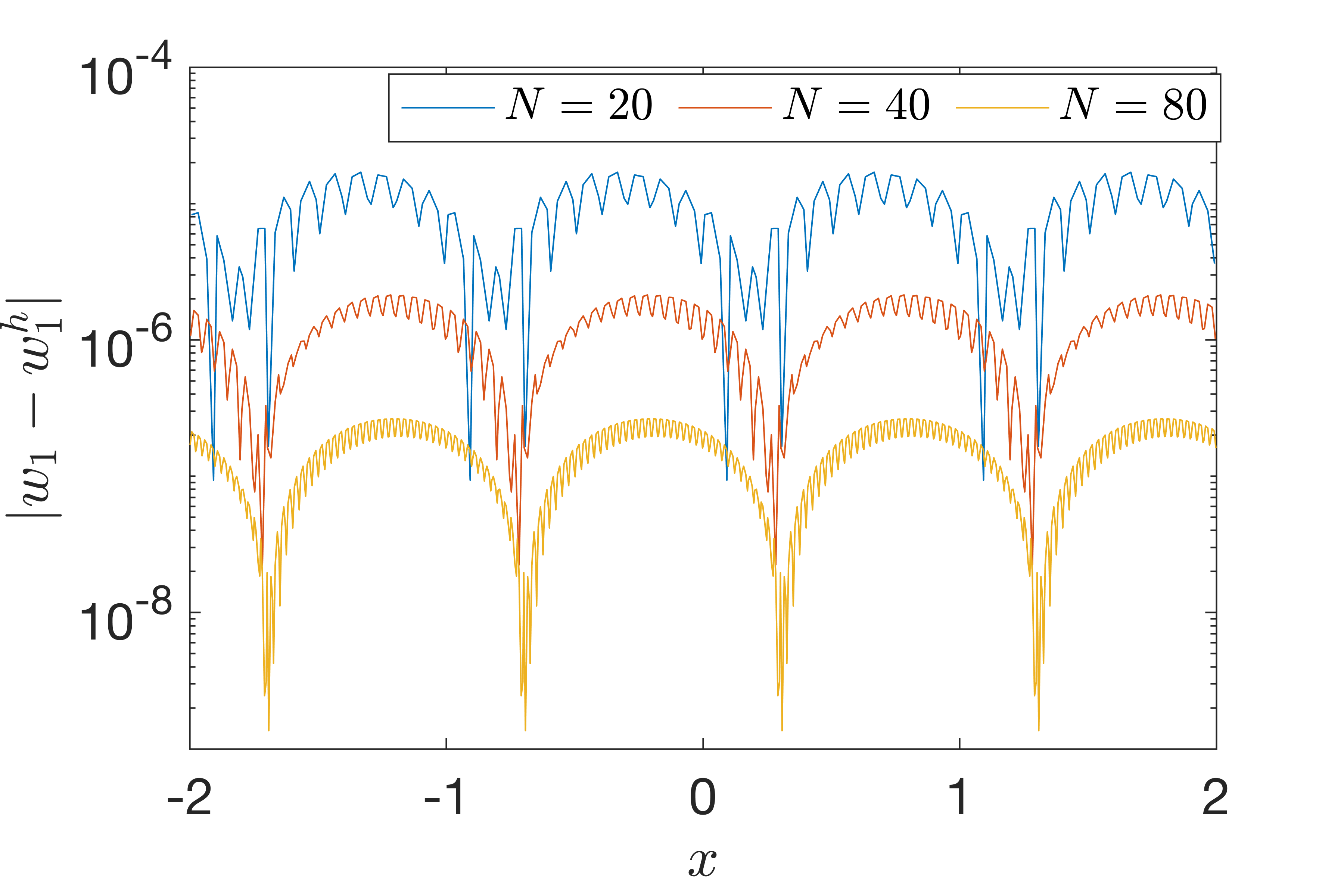}
	\includegraphics[width=0.45\linewidth,trim={.0cm .0cm .0cm .0cm},clip]{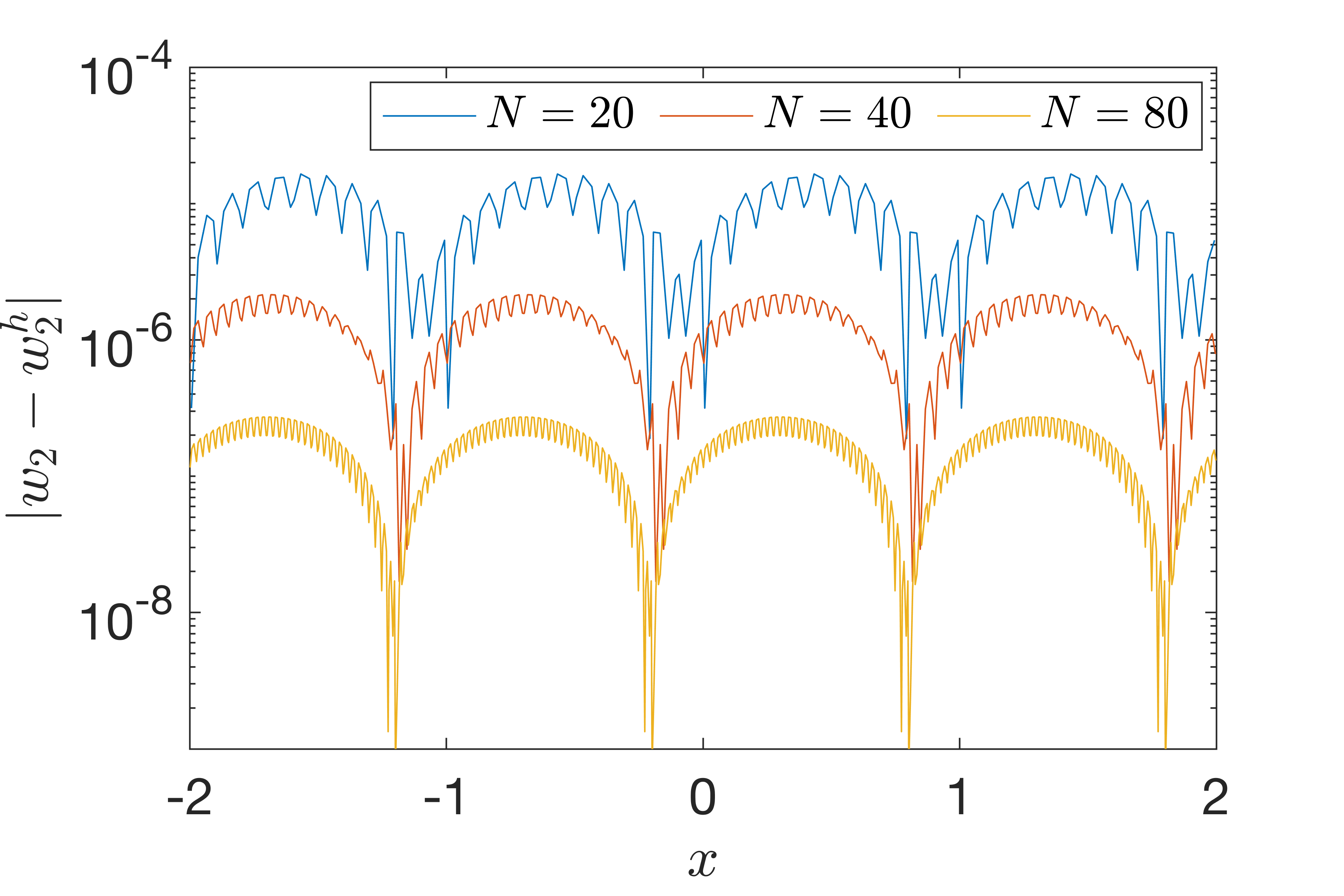}\\
	\includegraphics[width=0.45\linewidth,trim={.0cm .0cm .0cm .0cm},clip]{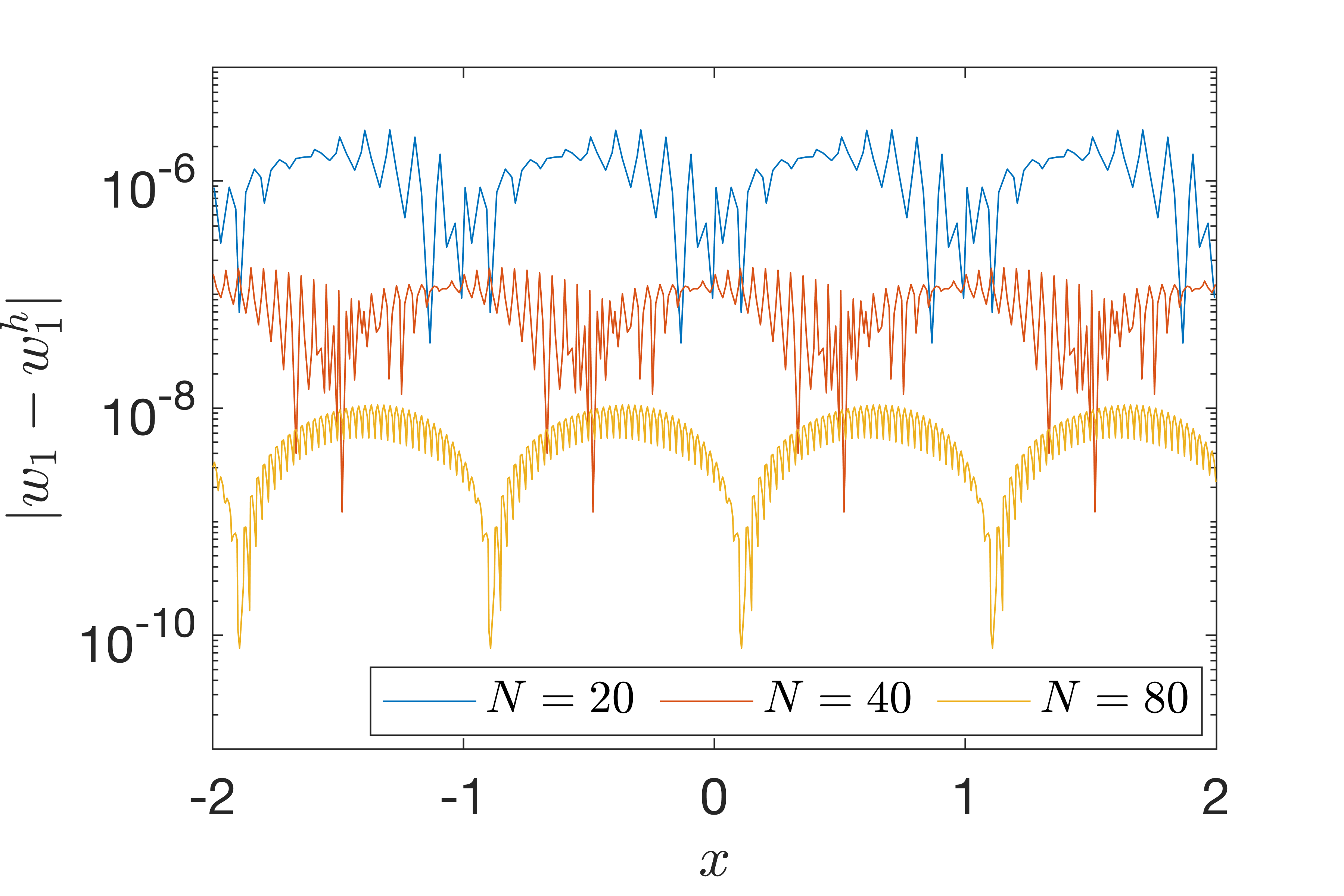}
	\includegraphics[width=0.45\linewidth,trim={.0cm .0cm .0cm .0cm},clip]{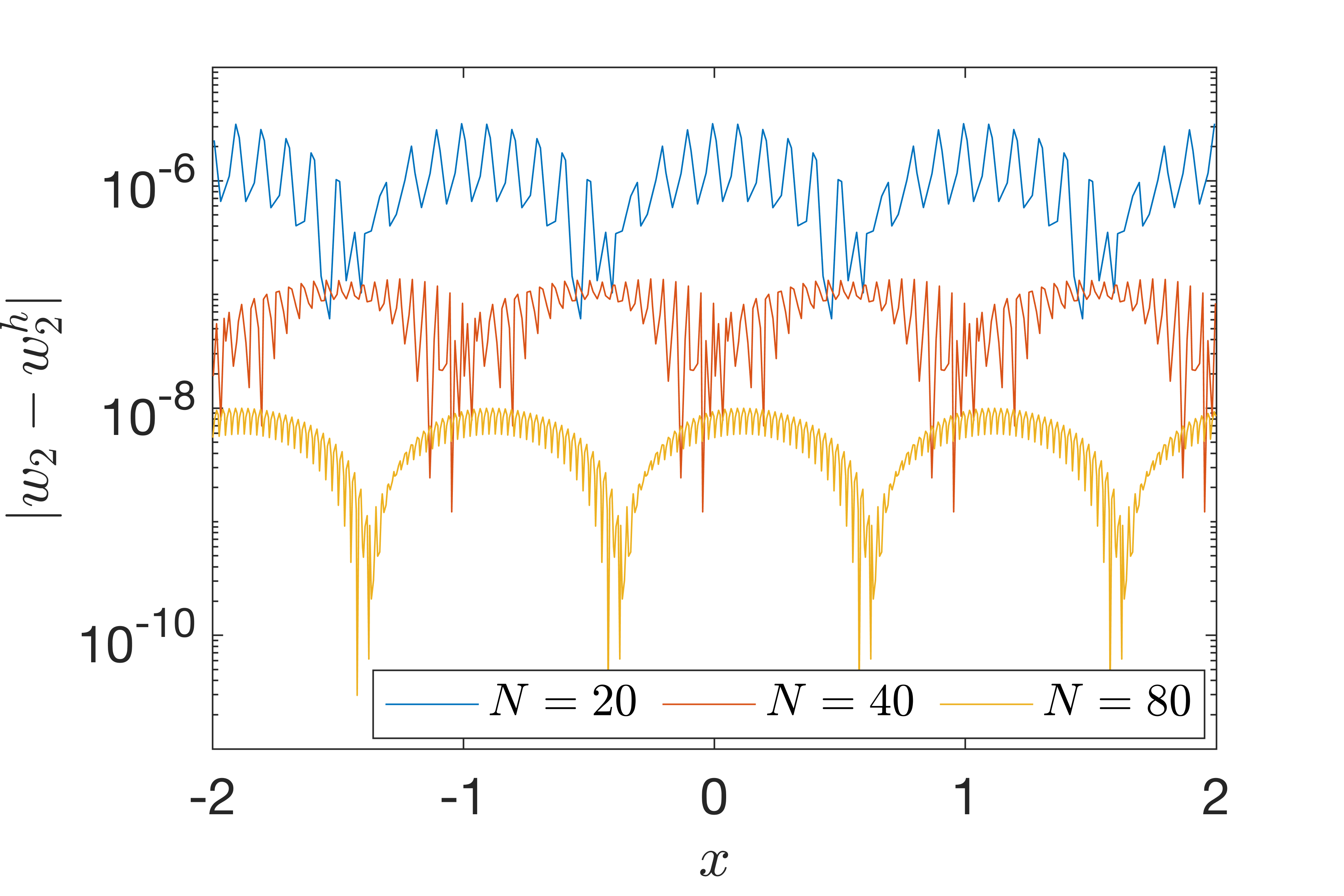}\\
	\includegraphics[width=0.45\linewidth,trim={.0cm .0cm .0cm .0cm},clip]{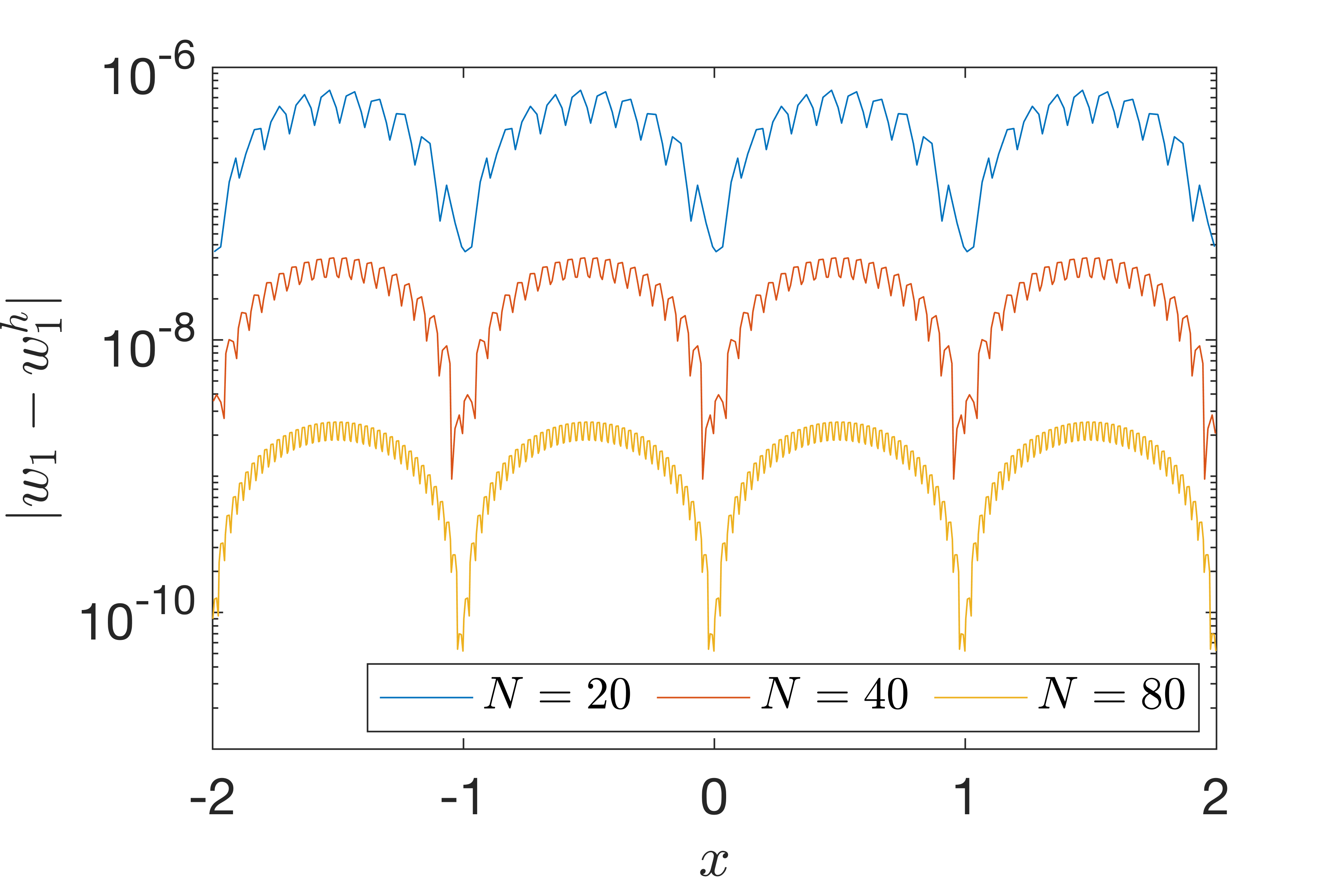}
	\includegraphics[width=0.45\linewidth,trim={.0cm .0cm .0cm .0cm},clip]{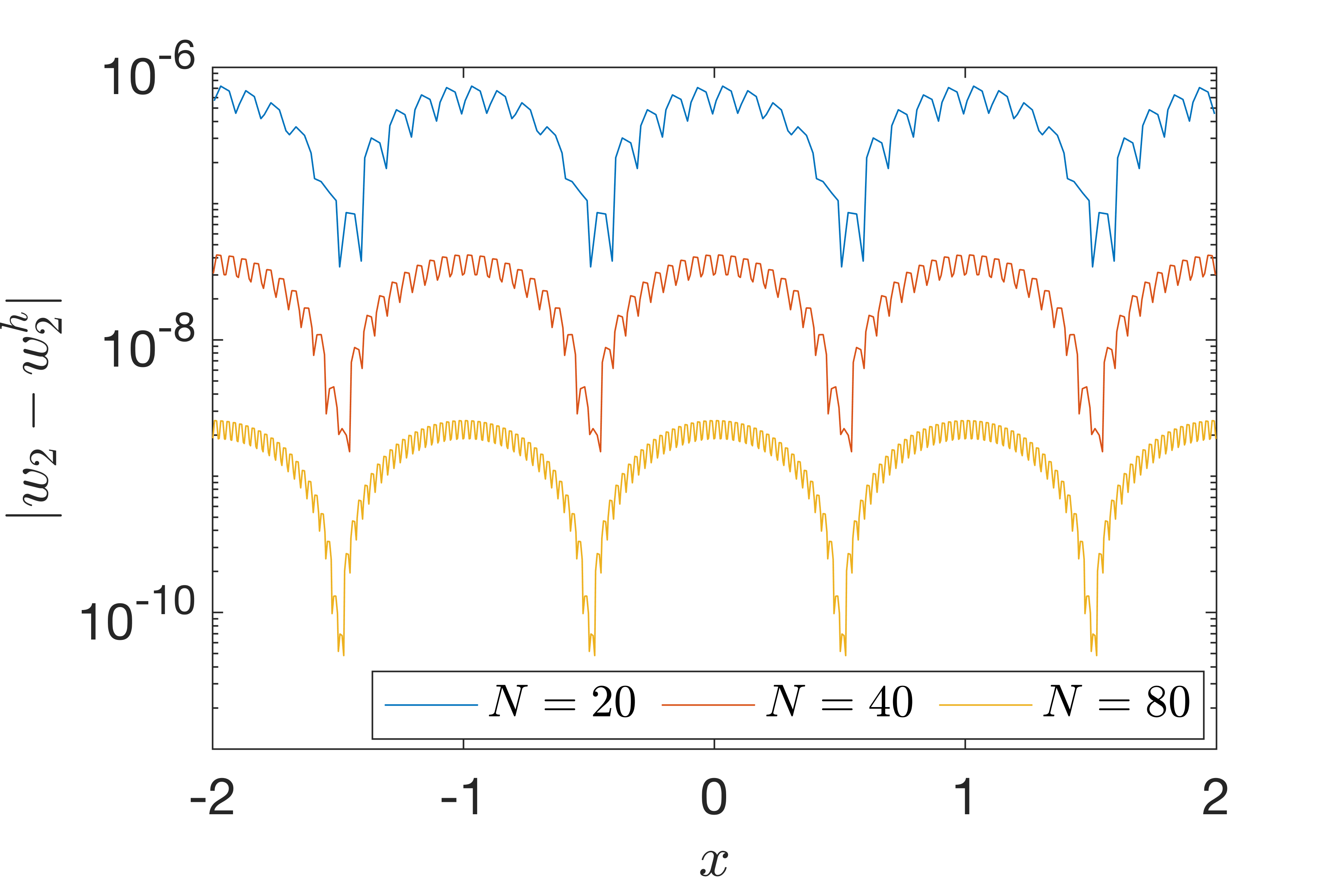}
\caption{From the left to the right, we plot the errors $|w_1 - w_1^h|$ and $|w_2 - w_2^h|$ over the spatial location $x$ for the problem (\ref{example1_sol}) using $\mathcal{P}^3$ polynomial on a uniform mesh of $N = 20, 40, 80$; from the top to the bottom are the results by using the upwind fluxes (\ref{u_flux}), the central fluxes (\ref{c_flux}), the mixed central fluxes (\ref{mc_flux}), and the mixed upwind fluxes (\ref{mu_flux}), respectively. }\label{fig:errors_eg1}
\end{figure}
 
 \subsubsection{Homogeneous Dirichlet boundary conditions}
For the second example, we test a problem where the solution data decay to $0$ in the region near physical boundaries, i.e., near $x_a=-2$ and $x_b=2$. For simplicity, we still consider the same problem (\ref{example1})--(\ref{nonlinear_term_numer}), but with the following manufactured solution
\begin{equation}\label{example2_sol}
w_1 = \sqrt{2}\Big(\cos(2\pi t)e^{-\frac{x^2}{0.01}} + 2\cos(4\pi t)e^{-\frac{x^2}{0.025}}\Big),\quad w_2 =  \sqrt{2}\Big(\cos(2\pi t)e^{-\frac{x^2}{0.01}} - 2\cos(4\pi t)e^{-\frac{x^2}{0.025}}\Big).
\end{equation}

With the same spatial discretization as conducted in Section \ref{sec:example1}, we also present the $L^2$ errors in solutions $w_1, w_2, b_1, b_2$ from Table \ref{example_two_l2_up} to Table \ref{example_two_l2_mu} with four different numerical fluxes defined from (\ref{u_flux}) to (\ref{mc_flux}). We observe the same results with the first example (\ref{example1_sol}): optimal convergence order $q+1$ for the dissipating schemes, the upwind flux (\ref{u_flux}) in Table \ref{example_two_l2_up} and the mixed upwind scheme (\ref{mu_flux}) in Table \ref{example_one_l2_mu}; super-convergence rate $q+2$ for the central flux (\ref{c_flux}) when $q$ is even, while a sub-optimal convergence rate $q$ when $q$ is odd as presented in Table \ref{example_two_l2_central}; when the mixed central flux (\ref{mc_flux}) is implemented displayed in Table \ref{example_two_l2_mc}, an optimal convergence order $q+1$ is obtained, but there are some fluctuations for this energy conservative scheme.

\begin{table}
 	\footnotesize
 	\begin{center}
 		\scalebox{1.0}{
 			\begin{tabular}{c c c c c c c c c c c c c c c}
 				\hline
 				~ & ~ &  $w_1$ &~ & $w_2$& ~ & $b_1$ & ~ & $b_2$ & ~\\
 				\cline{3-10}
 				$q$ & $N$ & $L^2$ error & order   & $L^2$ error & order  & $L^2$ error & order & $L^2$ error & order \\
 				\hline
 				0 & 40 & 3.4682e-01 & -- & 1.6750e-01 & -- & 2.7693e-01 & -- & 4.6903e-01 & --  \\
~ & 80 & 2.2832e-01 & 0.6032 & 1.8108e-01 & -0.1125 & 2.0982e-01 & 0.4003 & 3.5470e-01 & 0.4031 \\
~ & 160 & 1.4636e-01 & 0.6415 & 1.5203e-01 & 0.2523 & 1.7072e-01 & 0.2975 & 2.4479e-01 & 0.5351 \\
~ & 320 & 9.2281e-02 & 0.6654 & 1.0447e-01 & 0.5413 & 1.2168e-01 & 0.4886 & 1.5509e-01 & 0.6585 \\
~ & 640 & 5.5131e-02 & 0.7432 & 6.2669e-02 & 0.7372 & 7.5532e-02 & 0.6879 & 9.0712e-02 & 0.7737 \\
 				~ & ~ & ~ & ~ & ~ & ~ & ~ & ~ & ~ & ~\\
 				1 & 40 & 6.5776e-02 & -- & 4.1806e-02 & -- & 7.8510e-02 & -- & 7.7361e-02 & --  \\
~ & 80 & 1.6044e-02 & 2.0355 & 7.7474e-03 & 2.4319 & 1.8100e-02 & 2.1169 & 1.7530e-02 & 2.1418 \\
~ & 160 & 3.7870e-03 & 2.0829 & 1.4300e-03 & 2.4377 & 4.1830e-03 & 2.1134 & 3.9084e-03 & 2.1651 \\
~ & 320 & 9.2584e-04 & 2.0322 & 3.1078e-04 & 2.2021 & 1.0144e-03 & 2.0439 & 9.3726e-04 & 2.0601 \\
~ & 640 & 2.3002e-04 & 2.0090 & 7.4325e-05 & 2.0640 & 2.5150e-04 & 2.0120 & 2.3155e-04 & 2.0172 \\
 			    ~ & ~ & ~ & ~ & ~ & ~ & ~ & ~ & ~ & ~\\
 				2 & 40 & 9.0106e-03 & -- & 4.0634e-03 & -- & 7.4050e-03 & -- & 1.1856e-02 & --  \\
~ & 80 & 1.0237e-03 & 3.1378 & 4.1478e-04 & 3.2923 & 9.3769e-04 & 2.9813 & 1.2493e-03 & 3.2465 \\
~ & 160 & 1.2889e-04 & 2.9896 & 5.2273e-05 & 2.9882 & 1.1600e-04 & 3.0150 & 1.5885e-04 & 2.9754 \\
~ & 320 & 1.6149e-05 & 2.9966 & 6.5466e-06 & 2.9972 & 1.4479e-05 & 3.0021 & 1.9941e-05 & 2.9939 \\
~ & 640 & 2.0200e-06 & 2.9990 & 8.1840e-07 & 2.9999 & 1.8099e-06 & 2.9999 & 2.4949e-06 & 2.9987 \\
 				~ & ~ & ~ & ~ & ~ & ~ & ~ & ~ & ~ & ~\\
 				3 & 40 & 6.9888e-04 & -- & 2.9768e-04 & -- & 9.5778e-04 & -- & 4.8660e-04 & --  \\
~ & 80 & 6.9326e-05 & 3.3336 & 3.7505e-05 & 2.9886 & 5.0811e-05 & 4.2365 & 9.9215e-05 & 2.2941 \\
~ & 160 & 4.3839e-06 & 3.9831 & 2.3764e-06 & 3.9802 & 3.1020e-06 & 4.0339 & 6.3332e-06 & 3.9696 \\
~ & 320 & 2.7492e-07 & 3.9951 & 1.4898e-07 & 3.9955 & 1.9294e-07 & 4.0070 & 3.9790e-07 & 3.9924 \\
~ & 640 & 1.7199e-08 & 3.9986 & 9.3152e-09 & 3.9994 & 1.2048e-08 & 4.0012 & 2.4899e-08 & 3.9982 \\
 				\hline
 			\end{tabular}
 		}
 	\end{center}
 	\caption{\scriptsize{$L^2$ errors and the corresponding convergence rates for $w_1, w_2, b_1, b_2$ of problem (\ref{example2_sol}) using $\mathcal{P}^q$ polynomials and the upwind flux (\ref{u_flux}).  The interval is divided into $N$ uniform cells, and the terminal computational time is $T = 1$.}}\label{example_two_l2_up}
 \end{table}
 
 \begin{table}
 	\footnotesize
 	\begin{center}
 		\scalebox{1.0}{
 			\begin{tabular}{c c c c c c c c c c c c c c c}
 				\hline
 				~ & ~ &  $w_1$ &~ & $w_2$& ~ & $b_1$ & ~ & $b_2$ & ~\\
 				\cline{3-10}
 				$q$ & $N$ & $L^2$ error & order   & $L^2$ error & order  & $L^2$ error & order & $L^2$ error & order \\
 				\hline
 				0 & 40 & 4.3279e-01 & -- & 8.6203e-01 & -- & 8.5573e-01 & -- & 1.0623e+00 & --  \\
~ & 80 & 1.5428e-01 & 1.4882 & 1.3085e-01 & 2.7199 & 2.1157e-01 & 2.0160 & 1.9256e-01 & 2.4638 \\
~ & 160 & 3.4830e-02 & 2.1471 & 2.9357e-02 & 2.1561 & 4.7421e-02 & 2.1576 & 4.3602e-02 & 2.1428 \\
~ & 320 & 8.4962e-03 & 2.0355 & 7.1608e-03 & 2.0355 & 1.1543e-02 & 2.0385 & 1.0662e-02 & 2.0319 \\
~ & 640 & 2.1117e-03 & 2.0084 & 1.7798e-03 & 2.0084 & 2.8679e-03 & 2.0090 & 2.6513e-03 & 2.0077 \\
 				~ & ~ & ~ & ~ & ~ & ~ & ~ & ~ & ~ & ~\\
 				1 & 40 & 1.5483e-01 & -- & 1.0178e-01 & -- & 1.6979e-01 & -- & 1.9959e-01 & --  \\
~ & 80 & 6.7997e-02 & 1.1872 & 4.4655e-02 & 1.1886 & 7.1201e-02 & 1.2538 & 9.0364e-02 & 1.1432  \\
~ & 160 & 3.2712e-02 & 1.0557 & 2.1182e-02 & 1.0759 & 3.3579e-02 & 1.0843 & 4.3703e-02 & 1.0480 \\
~ & 320 & 1.6196e-02 & 1.0142 & 1.0453e-02 & 1.0189 & 1.6534e-02 & 1.0221 & 2.1675e-02 & 1.0117 \\
~ & 640 & 8.0776e-03 & 1.0036 & 5.2092e-03 & 1.0048 & 8.2345e-03 & 1.0057 & 1.0815e-02 & 1.0030 \\
 			    ~ & ~ & ~ & ~ & ~ & ~ & ~ & ~ & ~ & ~\\
 				2 & 40 & 5.4484e-02 & -- & 5.5823e-02 & -- & 7.8592e-02 & -- & 7.7412e-02 & --  \\
~ & 80 & 7.2114e-04 & 6.2394 & 4.2114e-04 & 7.0504 & 9.7526e-04 & 6.3325 & 6.6609e-04 & 6.8607 \\
~ & 160 & 4.2372e-05 & 4.0891 & 2.4013e-05 & 4.1324 & 5.7785e-05 & 4.0770 & 3.7483e-05 & 4.1514 \\
~ & 320 & 2.3909e-06 & 4.1475 & 1.4440e-06 & 4.0556 & 3.2248e-06 & 4.1634 & 2.2813e-06 & 4.0383 \\
~ & 640 & 1.4802e-07 & 4.0137 & 8.4411e-08 & 4.0965 & 1.9563e-07 & 4.0430 & 1.4071e-07 & 4.0191 \\
 				~ & ~ & ~ & ~ & ~ & ~ & ~ & ~ & ~ & ~\\
 				3 & 40 & 9.6599e-04 & -- & 3.8383e-04 & -- & 9.9817e-04 & -- & 1.0791e-03 & --  \\
~ & 80 & 1.4126e-04 & 2.7736 & 5.7700e-05 & 2.7338 & 1.6239e-04 & 2.6199 & 1.4212e-04 & 2.9247 \\
~ & 160 & 1.6458e-05 & 3.1015 & 7.1899e-06 & 3.0045 & 2.0128e-05 & 3.0121 & 1.5490e-05 & 3.1977 \\
~ & 320 & 1.9666e-06 & 3.0650 & 7.9819e-07 & 3.1712 & 2.3930e-06 & 3.0723 & 1.8120e-06 & 3.0957 \\
~ & 640 & 2.4608e-07 & 2.9985 & 9.9923e-08 & 2.9978 & 3.0224e-07 & 2.9850 & 2.2301e-07 & 3.0224 \\
 				\hline
 			\end{tabular}
 		}
 	\end{center}
 	\caption{\scriptsize{$L^2$ errors and the corresponding convergence rates for $w_1, w_2, b_1, b_2$ of problem (\ref{example2_sol}) using $\mathcal{P}^q$ polynomials and the central flux (\ref{c_flux}).  The interval is divided into $N$ uniform cells, and the terminal computational time is $T = 1$.}}\label{example_two_l2_central}
 \end{table}
 
 \begin{table}
 	\footnotesize
 	\begin{center}
 		\scalebox{1.0}{
 			\begin{tabular}{c c c c c c c c c c c c c c c}
 				\hline
 				~ & ~ &  $w_1$ &~ & $w_2$& ~ & $b_1$ & ~ & $b_2$ & ~\\
 				\cline{3-10}
 				$q$ & $N$ & $L^2$ error & order   & $L^2$ error & order  & $L^2$ error & order & $L^2$ error & order \\
 				\hline
 				0 & 40 & 1.1420e+00 & -- & 1.0684e+00 & -- & 1.6597e+00 & -- & 1.4617e+00 & --  \\
~ & 80 & 5.6375e-01 & 1.0184 & 5.7341e-01 & 0.8978 & 8.5070e-01 & 0.9642 & 7.5467e-01 & 0.9538 \\
~ & 160 & 2.7904e-01 & 1.0146 & 2.9644e-01 & 0.9518 & 4.3075e-01 & 0.9818 & 3.8201e-01 & 0.9822 \\
~ & 320 & 1.3942e-01 & 1.0011 & 1.5073e-01 & 0.9757 & 2.1747e-01 & 0.9860 & 1.9241e-01 & 0.9894 \\
~ & 640 & 6.9812e-02 & 0.9978 & 7.5997e-02 & 0.9880 & 1.0940e-01 & 0.9912 & 9.6589e-02 & 0.9942 \\
 				~ & ~ & ~ & ~ & ~ & ~ & ~ & ~ & ~ & ~\\
 				1 & 40 & 6.0056e-02 & -- & 8.9200e-02 & -- & 1.0663e-01 & -- & 1.0843e-01 & --  \\
~ & 80 & 1.2054e-02 & 2.3168 & 2.0543e-02 & 2.1184 & 2.6396e-02 & 2.0143 & 2.0926e-02 & 2.3733 \\
~ & 160 & 3.1799e-03 & 1.9224 & 2.5745e-03 & 2.9963 & 3.9940e-03 & 2.7244 & 4.1866e-03 & 2.3214 \\
~ & 320 & 7.9069e-04 & 2.0078 & 1.4214e-03 & 0.8570 & 8.3806e-04 & 2.2527 & 2.1421e-03 & 0.9668 \\
~ & 640 & 2.2029e-04 & 1.8437 & 1.5592e-04 & 3.1884 & 3.3500e-04 & 1.3229 & 1.8288e-04 & 3.5501 \\
 			    ~ & ~ & ~ & ~ & ~ & ~ & ~ & ~ & ~ & ~\\
 				2 & 40 & 5.9545e-03 & -- & 1.4531e-02 & -- & 1.8142e-02 & -- & 1.2809e-02 & --  \\
~ & 80 & 8.5996e-04 & 2.7916 & 1.3745e-03 & 3.4022 & 1.6622e-03 & 3.4482 & 1.5794e-03 & 3.0197 \\
~ & 160 & 9.9033e-05 & 3.1183 & 1.8951e-04 & 2.8585 & 2.1876e-04 & 2.9257 & 2.0878e-04 & 2.9193 \\
~ & 320 & 1.1776e-05 & 3.0721 & 2.3327e-05 & 3.0222 & 2.3848e-05 & 3.1974 & 2.8230e-05 & 2.8867 \\
~ & 640 & 1.3547e-06 & 3.1197 & 7.7926e-07 & 4.9037 & 1.5916e-06 & 3.9053 & 1.5336e-06 & 4.2022 \\
 				~ & ~ & ~ & ~ & ~ & ~ & ~ & ~ & ~ & ~\\
 				3 & 40 & 5.5938e-04 & -- & 9.2006e-04 & -- & 1.2434e-03 & -- & 8.7905e-04 & --  \\
~ & 80 & 6.0453e-05 & 3.2100 & 9.4019e-05 & 3.2907 & 1.1651e-04 & 3.4158 & 1.0684e-04 & 3.0405 \\
~ & 160 & 3.9276e-06 & 3.9441 & 4.9625e-06 & 4.2438 & 6.0736e-06 & 4.2617 & 6.5739e-06 & 4.0225 \\
~ & 320 & 3.4942e-07 & 3.4906 & 3.6036e-07 & 3.7836 & 6.6399e-07 & 3.1933 & 2.5104e-07 & 4.7108 \\
~ & 640 & 1.6524e-08 & 4.4023 & 2.9671e-08 & 3.6023 & 4.2447e-08 & 3.9674 & 2.2473e-08 & 3.4817 \\
 				\hline
 			\end{tabular}
 		}
 	\end{center}
 	\caption{\scriptsize{$L^2$ errors and the corresponding convergence rates for $w_1, w_2, b_1, b_2$ of problem (\ref{example2_sol}) using $\mathcal{P}^q$ polynomials and the mixed central flux (\ref{mc_flux}).  The interval is divided into $N$ uniform cells, and the terminal computational time is $T = 1$.}}\label{example_two_l2_mc}
 \end{table}
 
 \begin{table}
 	\footnotesize
 	\begin{center}
 		\scalebox{1.0}{
 			\begin{tabular}{c c c c c c c c c c c c c c c}
 				\hline
 				~ & ~ &  $w_1$ &~ & $w_2$& ~ & $b_1$ & ~ & $b_2$ & ~\\
 				\cline{3-10}
 				$q$ & $N$ & $L^2$ error & order   & $L^2$ error & order  & $L^2$ error & order & $L^2$ error & order \\
 				\hline
 				0 & 40 & 4.7347e-01 & -- & 2.4353e-01 & -- & 6.0749e-01 & -- & 4.4487e-01 & --  \\
~ & 80 & 3.5425e-01 & 0.4185 & 1.8822e-01 & 0.3716 & 4.3451e-01 & 0.4835 & 3.6475e-01 & 0.2865 \\
~ & 160 & 2.5777e-01 & 0.4587 & 1.1979e-01 & 0.6519 & 2.9064e-01 & 0.5801 & 2.7771e-01 & 0.3933 \\
~ & 320 & 1.7288e-01 & 0.5764 & 6.9034e-02 & 0.7951 & 1.8549e-01 & 0.6479 & 1.8681e-01 & 0.5720 \\
~ & 640 & 1.0479e-01 & 0.7223 & 3.8201e-02 & 0.8537 & 1.1043e-01 & 0.7482 & 1.1262e-01 & 0.7301 \\
 				~ & ~ & ~ & ~ & ~ & ~ & ~ & ~ & ~ & ~\\
 				1 & 40 & 6.5922e-02 & -- & 3.3550e-02 & -- & 5.8441e-02 & -- & 8.6760e-02 & --  \\
~ & 80 & 1.0901e-02 & 2.5964 & 8.6228e-03 & 1.9601 & 8.8293e-03 & 2.7266 & 1.7561e-02 & 2.3046 \\
~ & 160 & 2.2018e-03 & 2.3076 & 1.9331e-03 & 2.1572 & 1.5023e-03 & 2.5552 & 3.8618e-03 & 2.1850 \\
~ & 320 & 5.1385e-04 & 2.0993 & 4.6666e-04 & 2.0505 & 3.1504e-04 & 2.2535 & 9.2972e-04 & 2.0544 \\
~ & 640 & 1.2616e-04 & 2.0261 & 1.1560e-04 & 2.0133 & 7.4564e-05 & 2.0790 & 2.3021e-04 & 2.0138 \\
 			    ~ & ~ & ~ & ~ & ~ & ~ & ~ & ~ & ~ & ~\\
 				2 & 40 & 5.9664e-03 & -- & 6.5599e-03 & -- & 5.5459e-03 & -- & 1.1247e-02 & --  \\
~ & 80 & 9.1259e-04 & 2.7088 & 9.5240e-04 & 2.7840 & 8.4156e-04 & 2.7203 & 1.6648e-03 & 2.7562 \\
~ & 160 & 1.1692e-04 & 2.9645 & 1.4525e-04 & 2.7131 & 1.0732e-04 & 2.9712 & 2.4087e-04 & 2.7890 \\
~ & 320 & 1.4532e-05 & 3.0082 & 1.9448e-05 & 2.9008 & 1.3222e-05 & 3.0209 & 3.1686e-05 & 2.9263 \\
~ & 640 & 1.8113e-06 & 3.0041 & 2.4783e-06 & 2.9722 & 1.6409e-06 & 3.0104 & 4.0191e-06 & 2.9789 \\
 				~ & ~ & ~ & ~ & ~ & ~ & ~ & ~ & ~ & ~\\
 				3 & 40 & 6.2444e-04 & -- & 4.5116e-04 & -- & 5.8689e-04 & -- & 9.1789e-04 & --  \\
~ & 80 & 2.9422e-05 & 4.4076 & 6.5006e-05 & 2.7950 & 5.3307e-05 & 3.4607 & 8.5681e-05 & 3.4213 \\
~ & 160 & 1.6370e-06 & 4.1678 & 3.4026e-06 & 4.2559 & 2.6600e-06 & 4.3248 & 4.6302e-06 & 4.2098 \\
~ & 320 & 9.7929e-08 & 4.0631 & 2.0255e-07 & 4.0703 & 1.5352e-07 & 4.1149 & 2.7869e-07 & 4.0543 \\
~ & 640 & 6.0479e-09 & 4.0172 & 1.2504e-08 & 4.0178 & 9.3843e-09 & 4.0321 & 1.7257e-08 & 4.0134 \\
 				\hline
 			\end{tabular}
 		}
 	\end{center}
 	\caption{\scriptsize{$L^2$ errors and the corresponding convergence rates for $w_1, w_2, b_1, b_2$ of problem (\ref{example2_sol}) using $\mathcal{P}^q$ polynomials and the mixed upwind flux (\ref{mu_flux}).  The interval is divided into $N$ uniform cells, and the terminal computational time is $T = 1$.}}\label{example_two_l2_mu}
 \end{table}
 
 \begin{figure}[htb!]
	\centering
	\includegraphics[width=0.45\linewidth,trim={.0cm .0cm .0cm .0cm},clip]{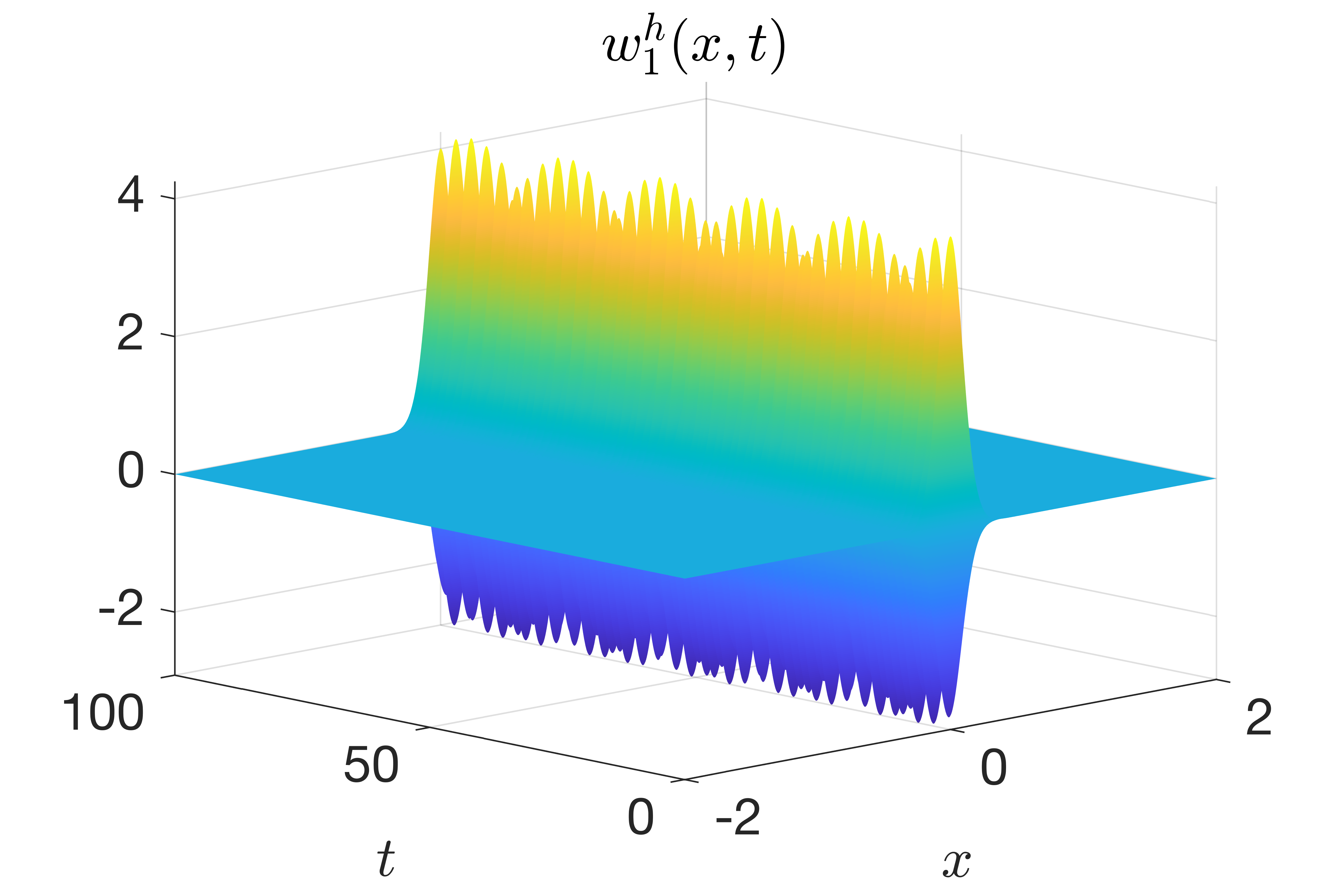}
	\includegraphics[width=0.45\linewidth,trim={.0cm .0cm .0cm .0cm},clip]{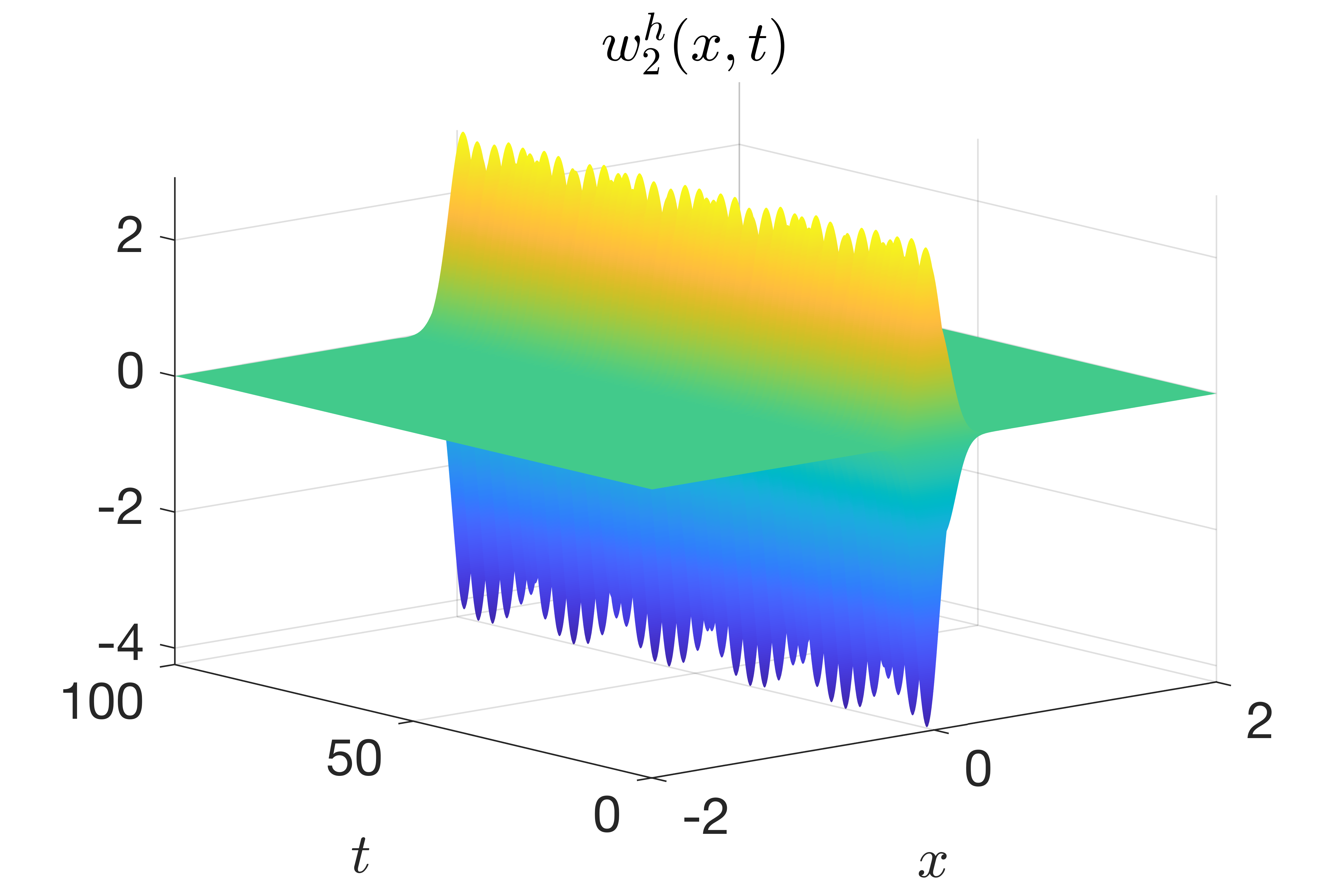}
\caption{Space-time plots of the solutions of problem (\ref{example2_sol}) with the degree of approximation space $q=3$. The upwind fluxes (\ref{u_flux}) is used in the simulation.}\label{fig:w1_w2_eg2}
\end{figure}

 We also present the solutions of the problem (\ref{example2_sol}) in Figure \ref{fig:solutions_w1_w2_eg2} and Figure \ref{fig:w1_w2_eg2}. In the simulation, the approximation degrees for $w_1^h$ and $w_2^h$ are $q = 3$, and the upwind flux (\ref{u_flux}) is used. Figure \ref{fig:solutions_w1_w2_eg2} displays both the numerical solutions $w_1^h, w_2^h$ and the exact solutions $w_1, w_2$ at $t = 0, 30, 75, 100$; while Figure \ref{fig:w1_w2_eg2} presents the space-time plot of the solutions (\ref{example2_sol}) from $t = 0$ to $t = 100$. We find that the numerical results match well with the analytic solution for a very long time.
\begin{figure}[htb!]
	\centering
	\includegraphics[width=0.45\linewidth,trim={.0cm .0cm .0cm .0cm},clip]{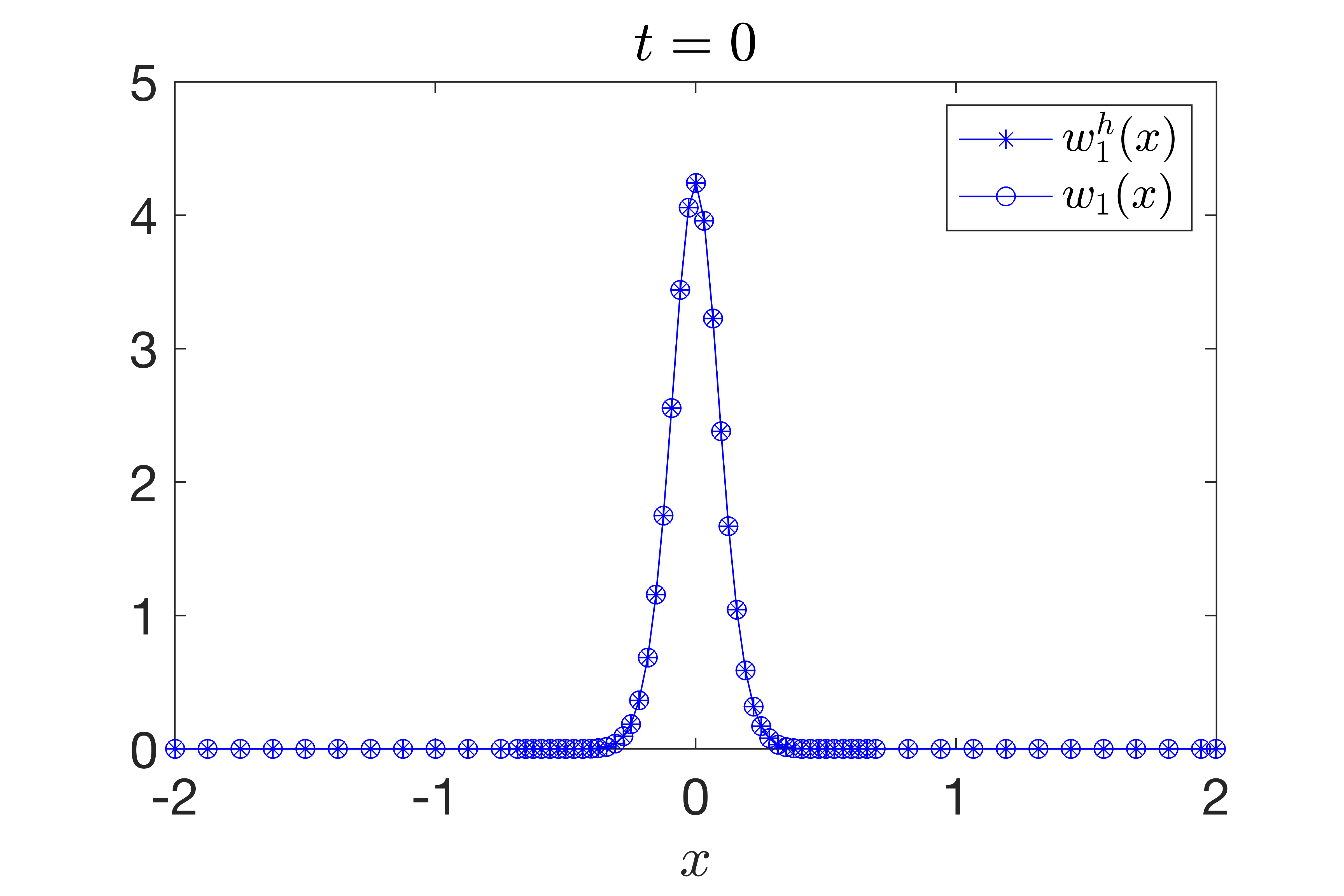}
	\includegraphics[width=0.45\linewidth,trim={.0cm .0cm .0cm .0cm},clip]{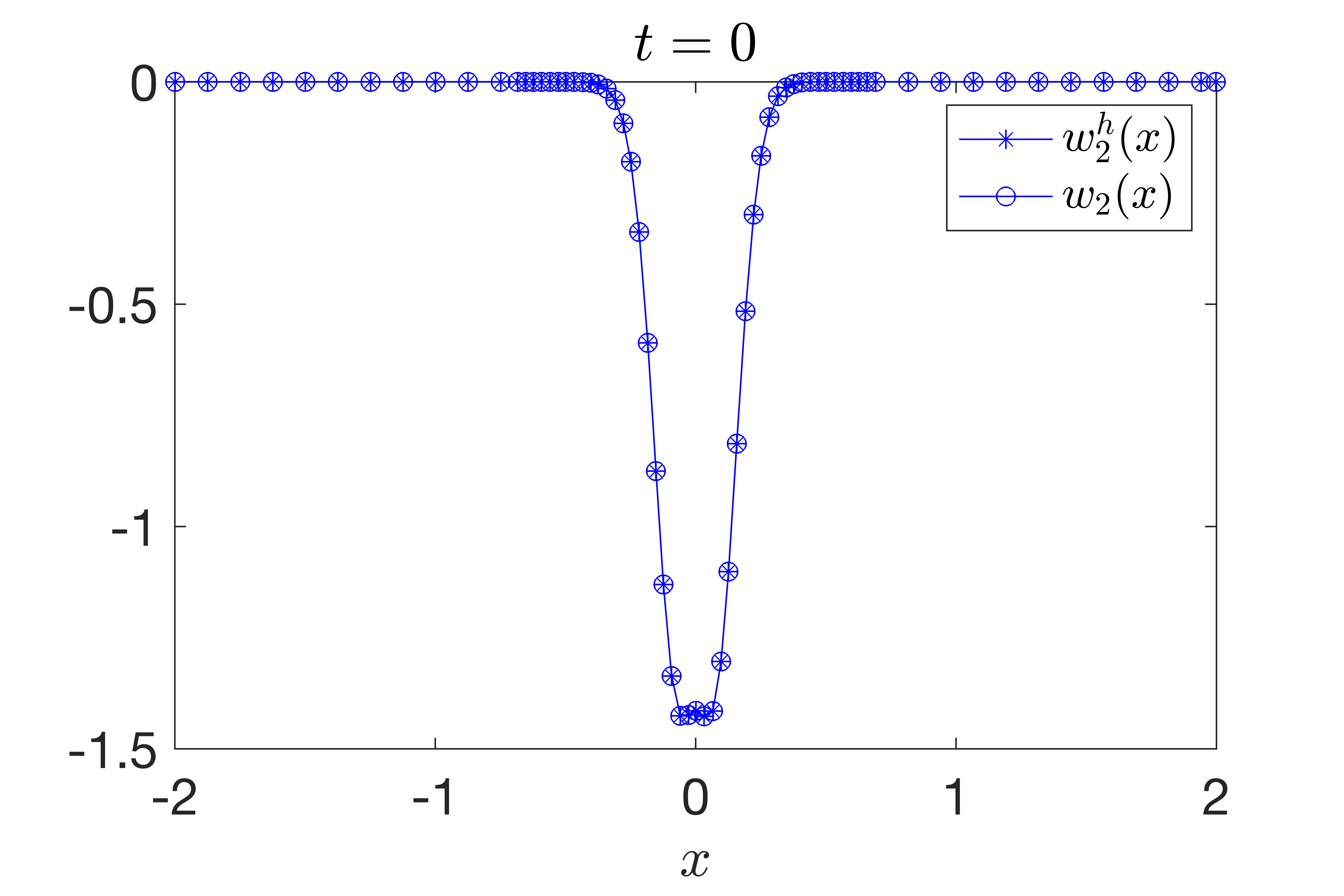}\\
	\includegraphics[width=0.45\linewidth,trim={.0cm .0cm .0cm .0cm},clip]{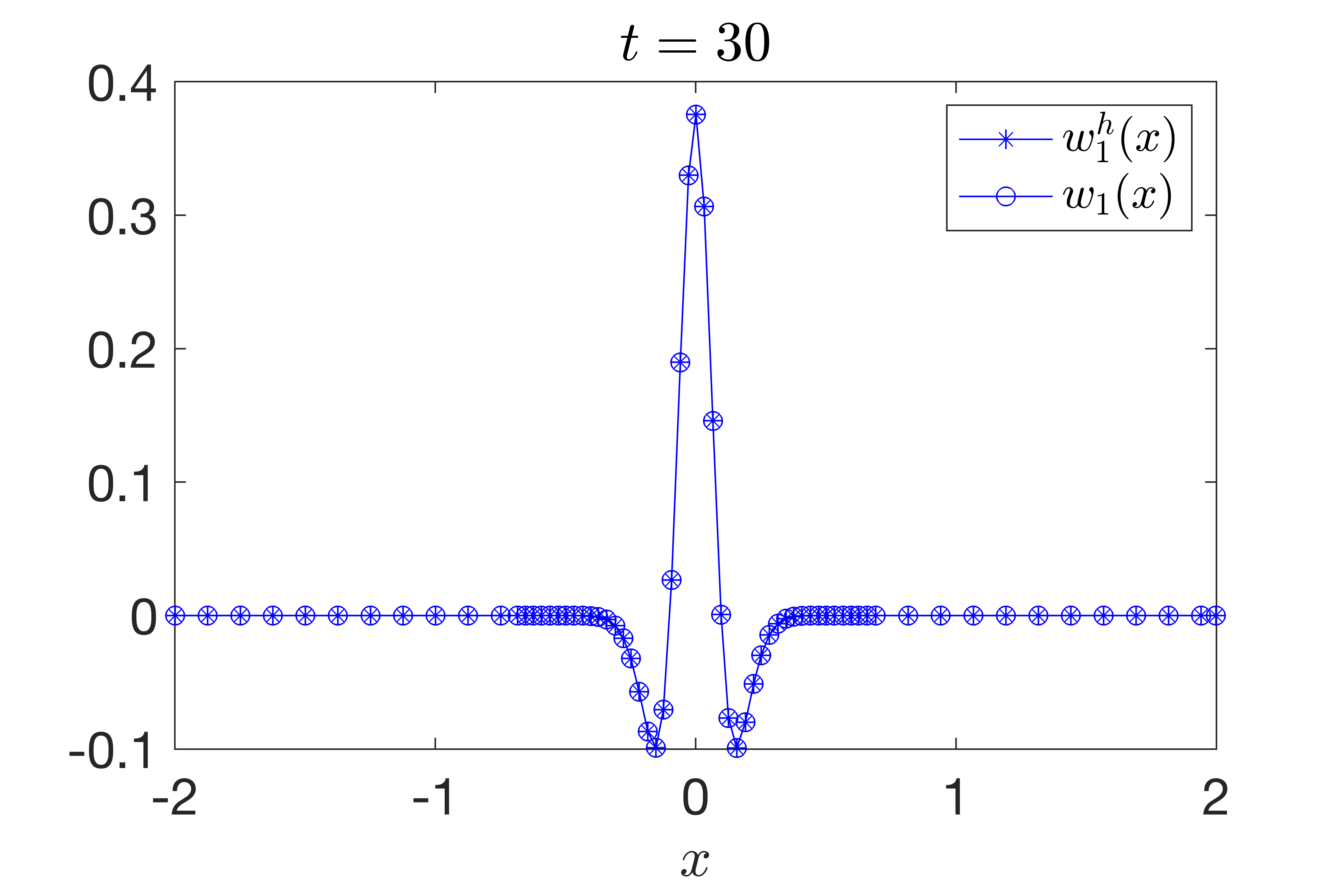}
	\includegraphics[width=0.45\linewidth,trim={.0cm .0cm .0cm .0cm},clip]{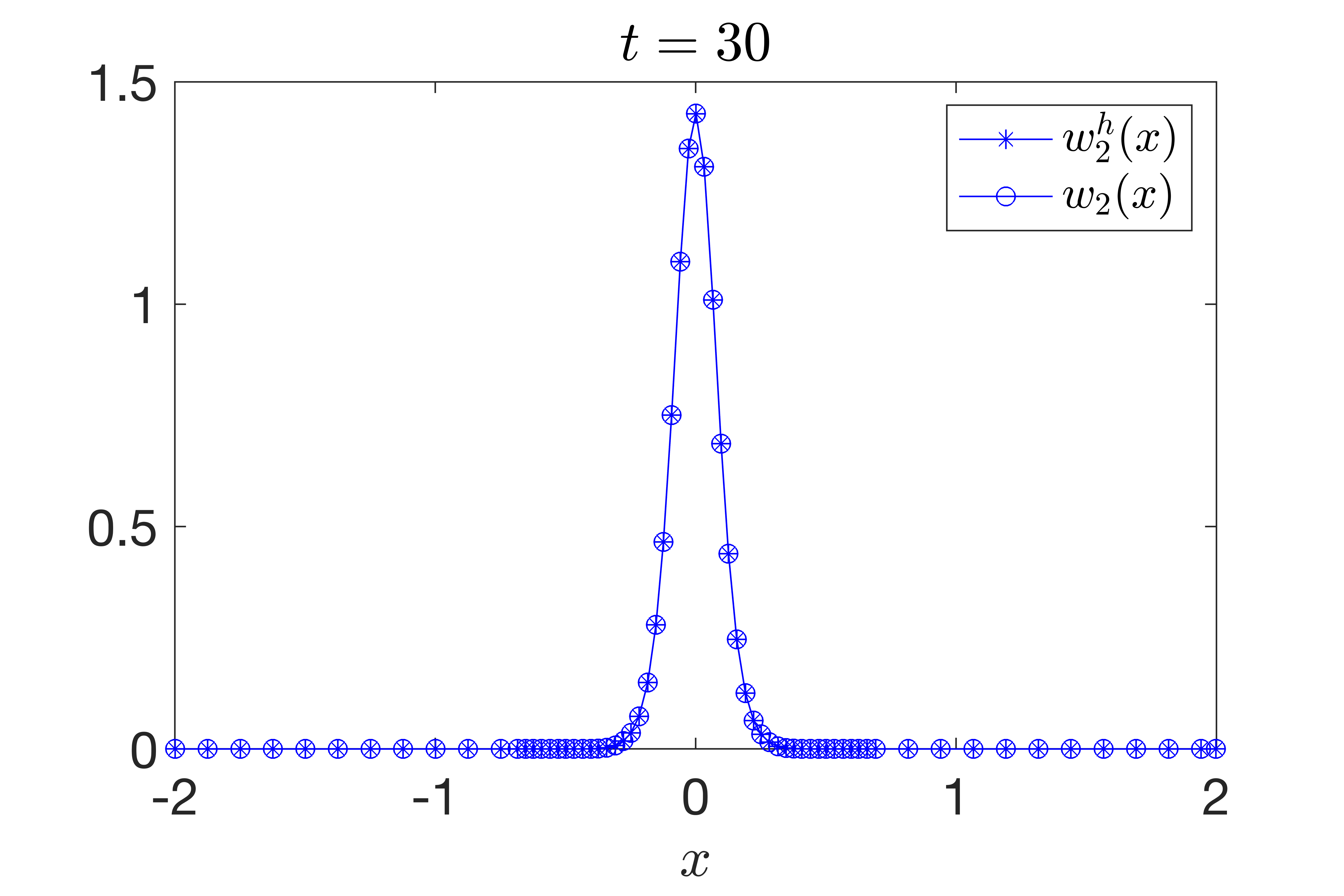}\\
	\includegraphics[width=0.45\linewidth,trim={.0cm .0cm .0cm .0cm},clip]{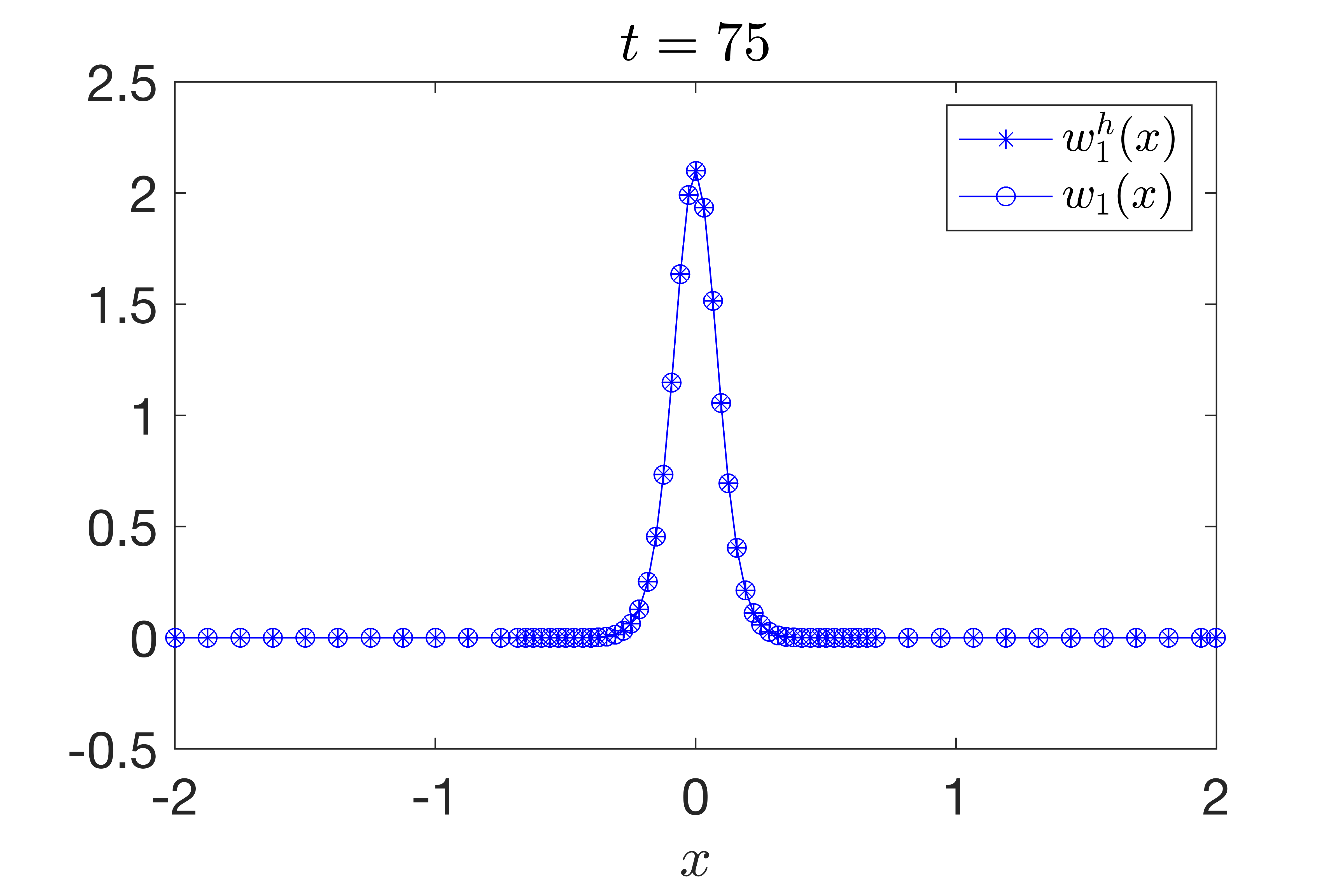}
	\includegraphics[width=0.45\linewidth,trim={.0cm .0cm .0cm .0cm},clip]{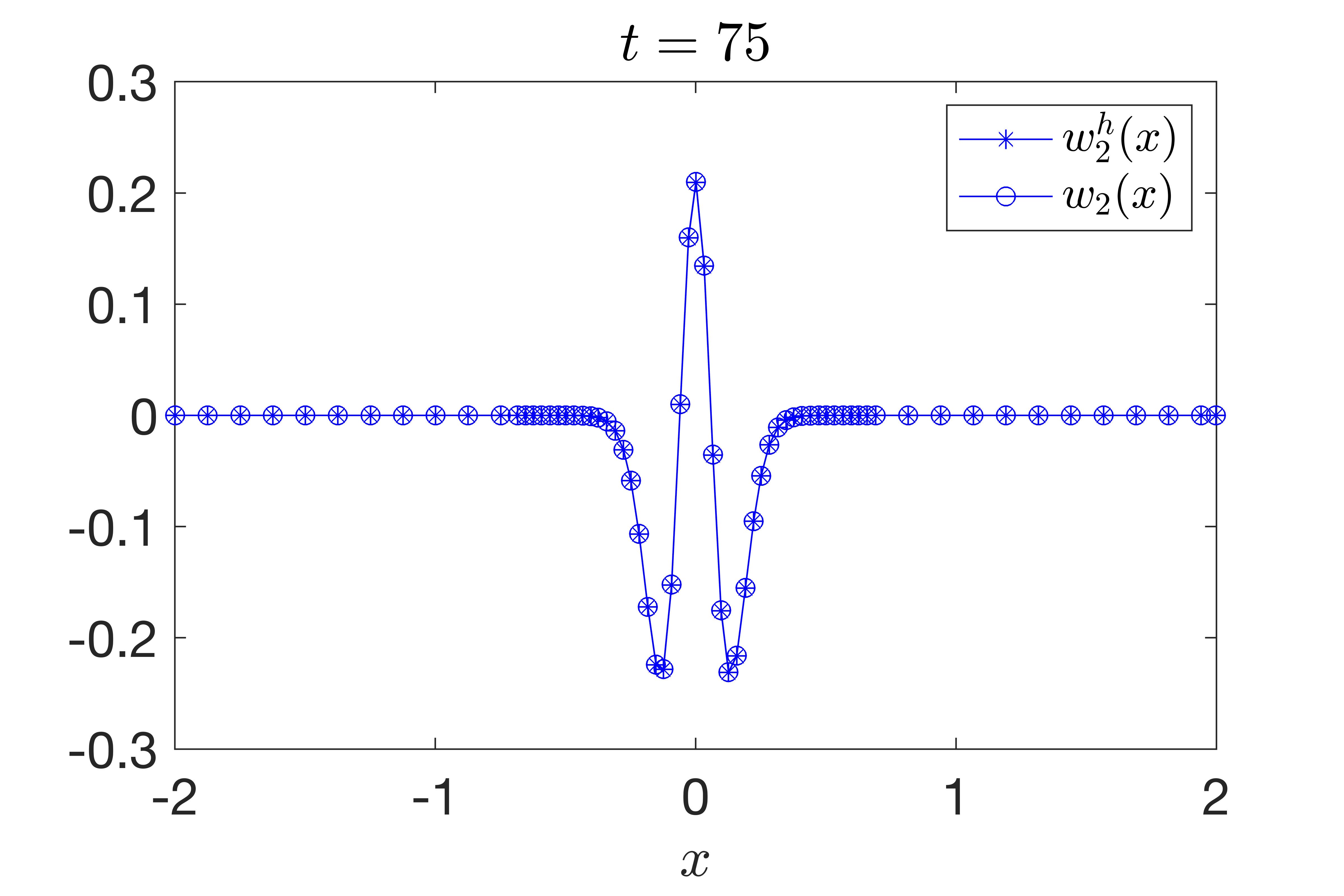}\\
	\includegraphics[width=0.45\linewidth,trim={.0cm .0cm .0cm .0cm},clip]{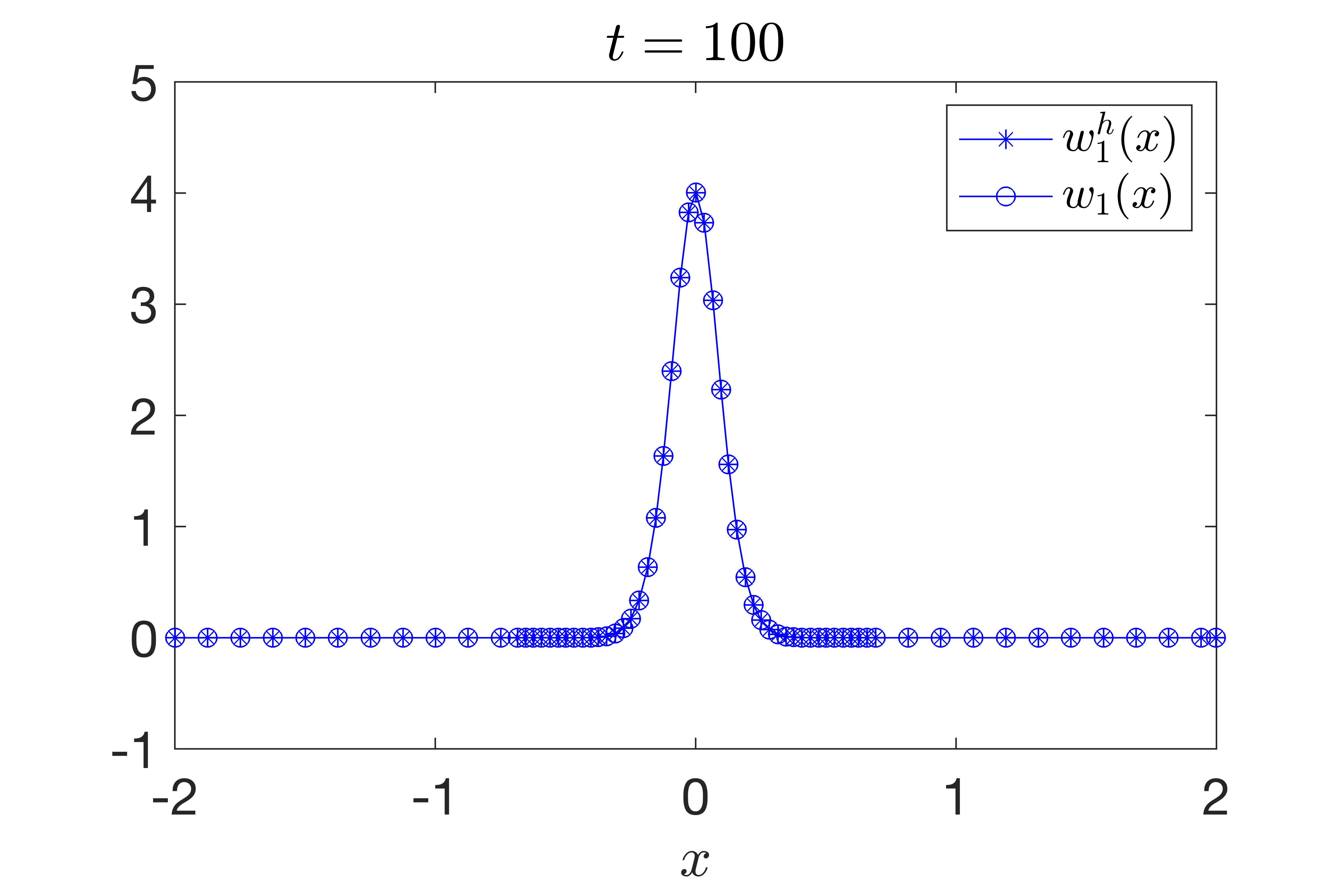}
	\includegraphics[width=0.45\linewidth,trim={.0cm .0cm .0cm .0cm},clip]{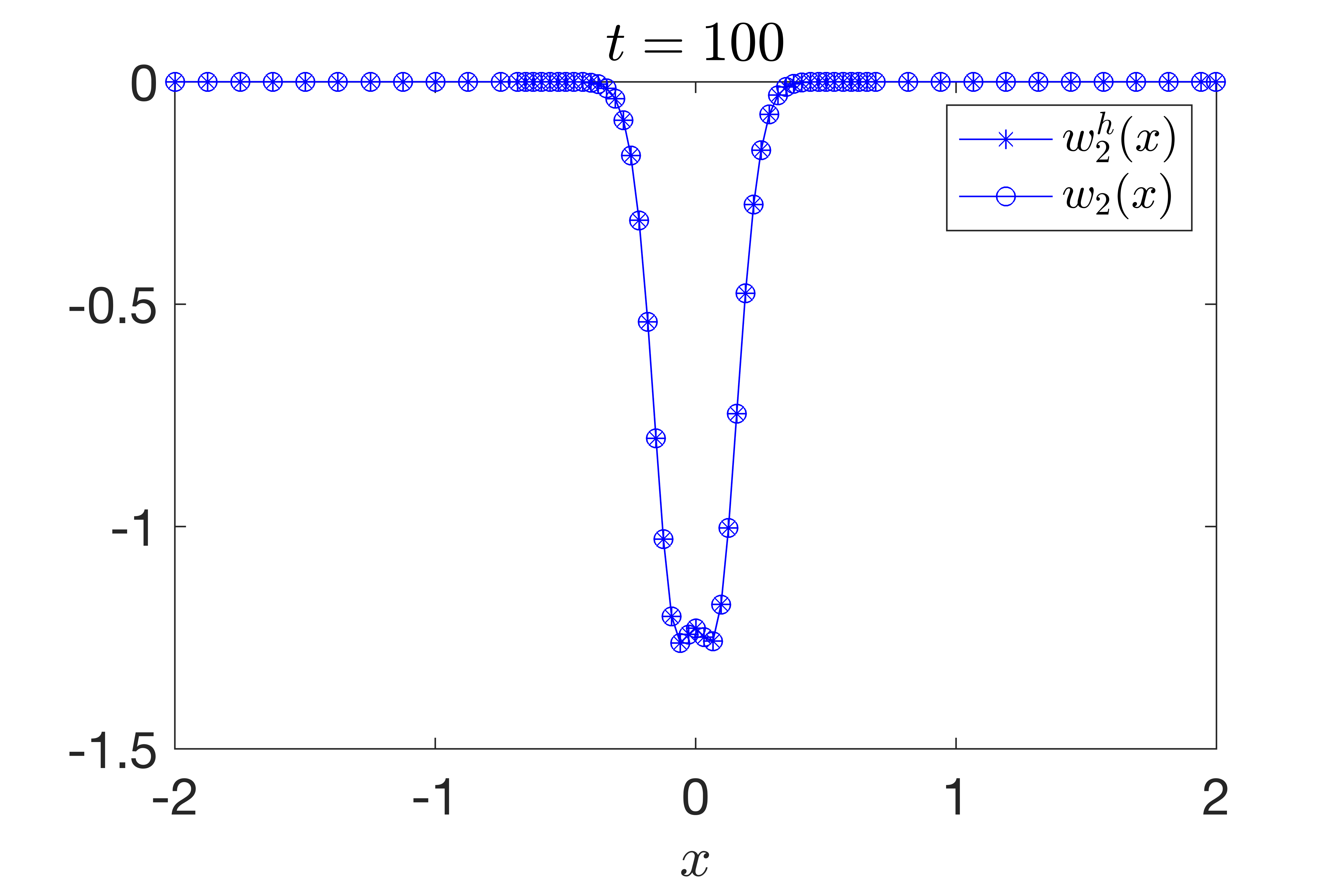}
\caption{Plots of both the numerical solutions $w_1^h, w_2^h$, and the exact solutions $w_1, w_2$ of problem (\ref{example2_sol}) with the degree of approximation space $q=3$. The upwind fluxes (\ref{u_flux}) is used in the simulation.
The numerical and exact solutions at $t=0,30,75,100$ are plotted from top to bottom. On the left, we present the results for $w_1$, while on the right, we display the results for $w_2$.  }\label{fig:solutions_w1_w2_eg2}
\end{figure}

\subsection{Kink solitons}
Here, we consider the traveling wave-like solutions of the system (\ref{system_characteristic}). Let
\[w_1(x, t) = w_1(x - ct), \quad w_2(x, t) = w_2(x - ct),\]
then system (\ref{system_characteristic}) can be rewritten to the following ODE system,
\begin{equation}\label{ODE_system}
    \begin{aligned}
    (c+1)\frac{dw_1}{dz} &= N(w_1, w_2)w_2,\\
    (1-c)\frac{dw_2}{dz} &= N(w_1, w_2)w_1,
    \end{aligned}
\end{equation}
where $z:= x - ct$, and the nonlinear interaction, $N(w_1, w_2)$, is specified the same as the one in example one, see (\ref{nonlinear_term_numer}). When $|c|\neq 1$, we have
\[(w_1, w_2) = (0, 0)\ \ \mbox{and}\ \ (w_1, w_2) = (\cos\theta, \sin\theta), \ \theta \in (0,2\pi]\]
are fixed points of the ODE system (\ref{ODE_system}). As shown in \cite{liweinstein23}, we have
\[\frac{dQ\big(w_1(z), w_2(z)\big)}{dz} = 0,\]
where
\begin{equation}\label{integral_curve}
Q\big(w_1(z), w_2(z)\big) = c\left(\frac{(w_1+w_2)^2}{2} + \frac{(w_1-w_2)^2}{2}\right) + w_1^2-w_2^2.
\end{equation}
This indicates that for a given constant $Q$, (\ref{integral_curve}) can only be unbounded, periodic, a fixed point, or approaches fixed points of (\ref{ODE_system}). We have the so-called kink solitons when $|c| < 1$ and the solutions with nontrivial asymptotic values at either $+\infty$ or $-\infty$. 
{We refer to \cite{liweinstein23} for more extensive discussions.}

In this section, we 
numerically study the dynamics of these kink solitons of the system (\ref{system_characteristic}) by the proposed DG scheme in Section \ref{Sec: DG_formulation} with the upwind fluxes (\ref{u_flux}).

To this end, we consider problem (\ref{system_characteristic}) in a computational domain defined from $x_a = -40$ to $x_b = 200$, the wave speed is chosen to be $c = 0.4 < 1$, and the degree of the approximation space is set as $q = (0,1,2,3)$. In addition, we discretize the computational domain by a uniform mesh with DG element vertices $x_i = x_a + ih, i = 0, 1, \cdots, 600$, namely, $h = 0.4$. The problem is evolved with the classical $4$-stage Runge Kutta time integrator until the final time $T = 100$ with time step size $\Delta_t = 4\times 10^{-5}$.

An accurate initial data profile, given by a kink solution, is generated as follows. We perturb the trivial fixed point, $(w_1, w_2) = (0, 0)$, of the ODE system (\ref{ODE_system}) to make
\[w_1(z = 0) = -7\sqrt{2}\times 10^{-51}, \quad w_2(z = 0) = -3\sqrt{2}\times 10^{-51}\]
be the initial data, then implement MATLAB inline function $ode45$ to solve (\ref{ODE_system}). In particular, when calling $ode45$, we set the termination time to be coincidental with spatial Gauss nodes, then followed by setting $t = 0$ in $z = x - ct$ of the resulting ODE system solution, we obtain the initial profile for the original PDE system (\ref{system_characteristic}), see Figure \ref{fig:kink_initial}. In addition, we impose Dirichlet boundary conditions when numerically solving the problem (\ref{system_characteristic}). 

\begin{figure}[htb!]
	\centering
	\includegraphics[width=0.5\linewidth,trim={.0cm .0cm .0cm .0cm},clip]{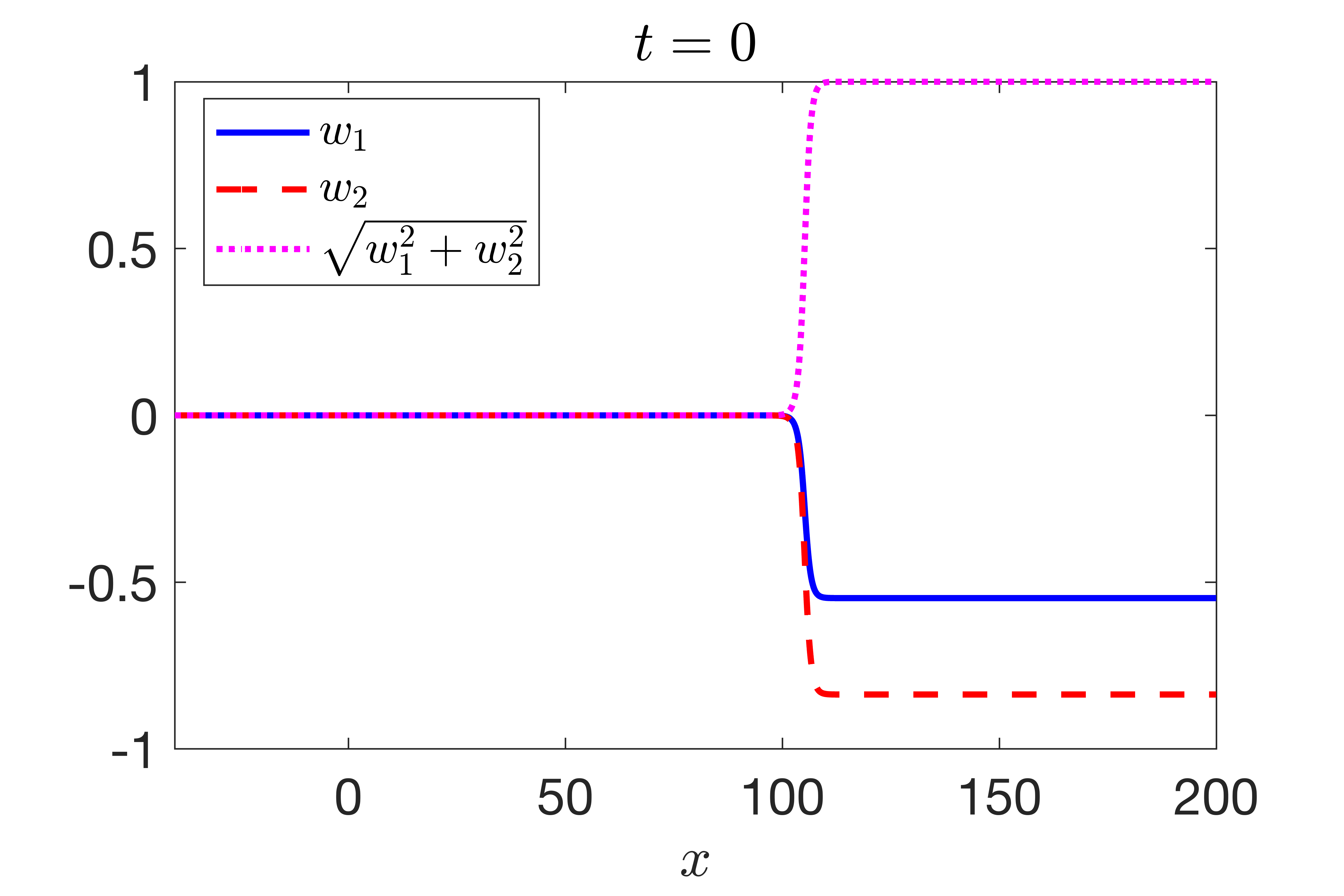}
\caption{Initial profiles, $(w_1(x,0), w_2(x, 0))$, of the kink solutions of the system (\ref{system_characteristic}).}\label{fig:kink_initial}
\end{figure}

Figure \ref{fig:solutions_w1_eg3} (resp. Figure \ref{fig:solutions_w2_eg3}) presents the space-time plots of $w_1^h$ (resp. $w_2^h$), while Figure \ref{fig:solutions_w12_eg3} displays the comparison between numerical solutions with different approximation degree $q$ at final time $t = 100$: on the left is the plot for $w_1^h$ and on the right is for $w_2^h$. In addition, Figure \ref{fig:solutions_w1_eg3} and Figure \ref{fig:solutions_w2_eg3} have the same layout: from left to right are the kink solitons with the degree of approximation space $q = 0, q = 1, q = 2$ and $q = 3$, respectively. From these figures, we see that when the degree of the approximation space is very small, say $q = 0$, non-physical oscillations are generated near the fast transition region of the kink solitons and then move to the left with the oscillations being weaker and weaker, but make the kink solitons a small phase shift to the right. When the degree of the approximation is relatively large, for this example $q \geq 1$, we see that the kink soliton, $w_1^h$, decreases monotonically from $0$ to $\sim -0.5477$ in Figure \ref{fig:solutions_w1_eg3}, while decreases monotonically from $0$ to $\sim -0.8366$ for the kink soliton, $w_2^h$, in Figure \ref{fig:solutions_w2_eg3}. And they move from the left to the right with their original shape.


\begin{figure}[htb!]
	\centering
	\includegraphics[width=0.45\linewidth,trim={.0cm .0cm .0cm .0cm},clip]{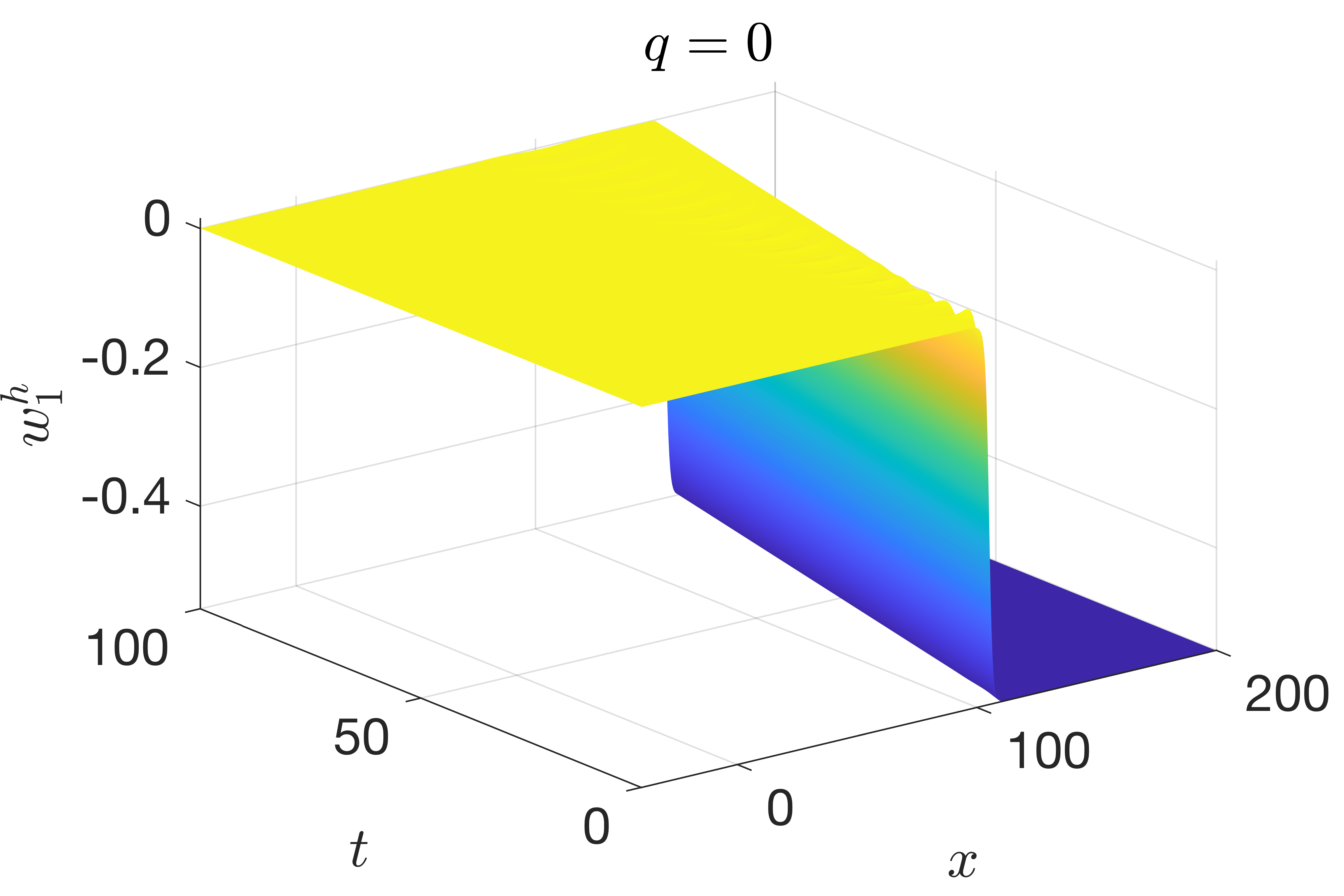}
	\includegraphics[width=0.45\linewidth,trim={.0cm .0cm .0cm .0cm},clip]{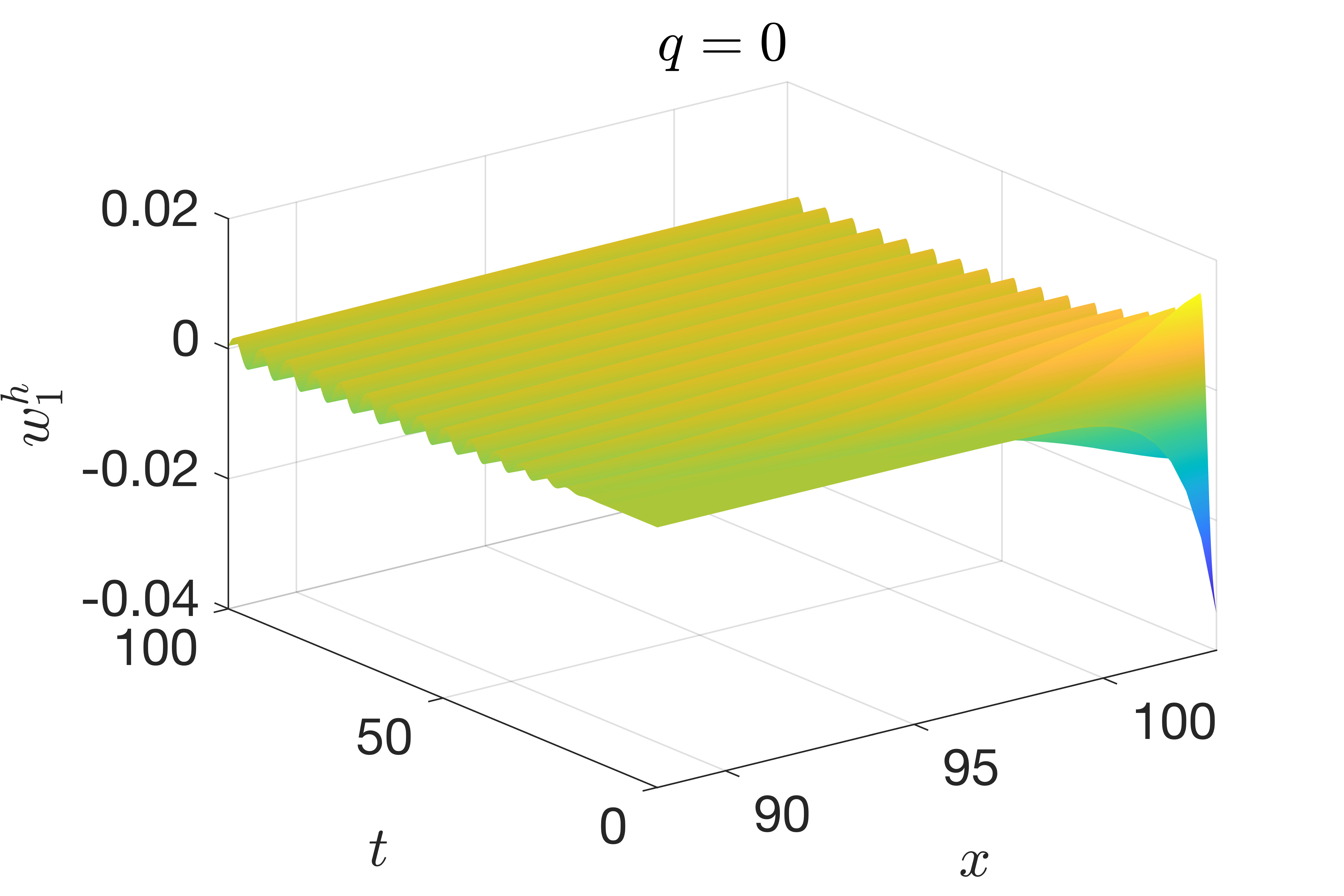}\\
	\includegraphics[width=0.45\linewidth,trim={.0cm .0cm .0cm .0cm},clip]{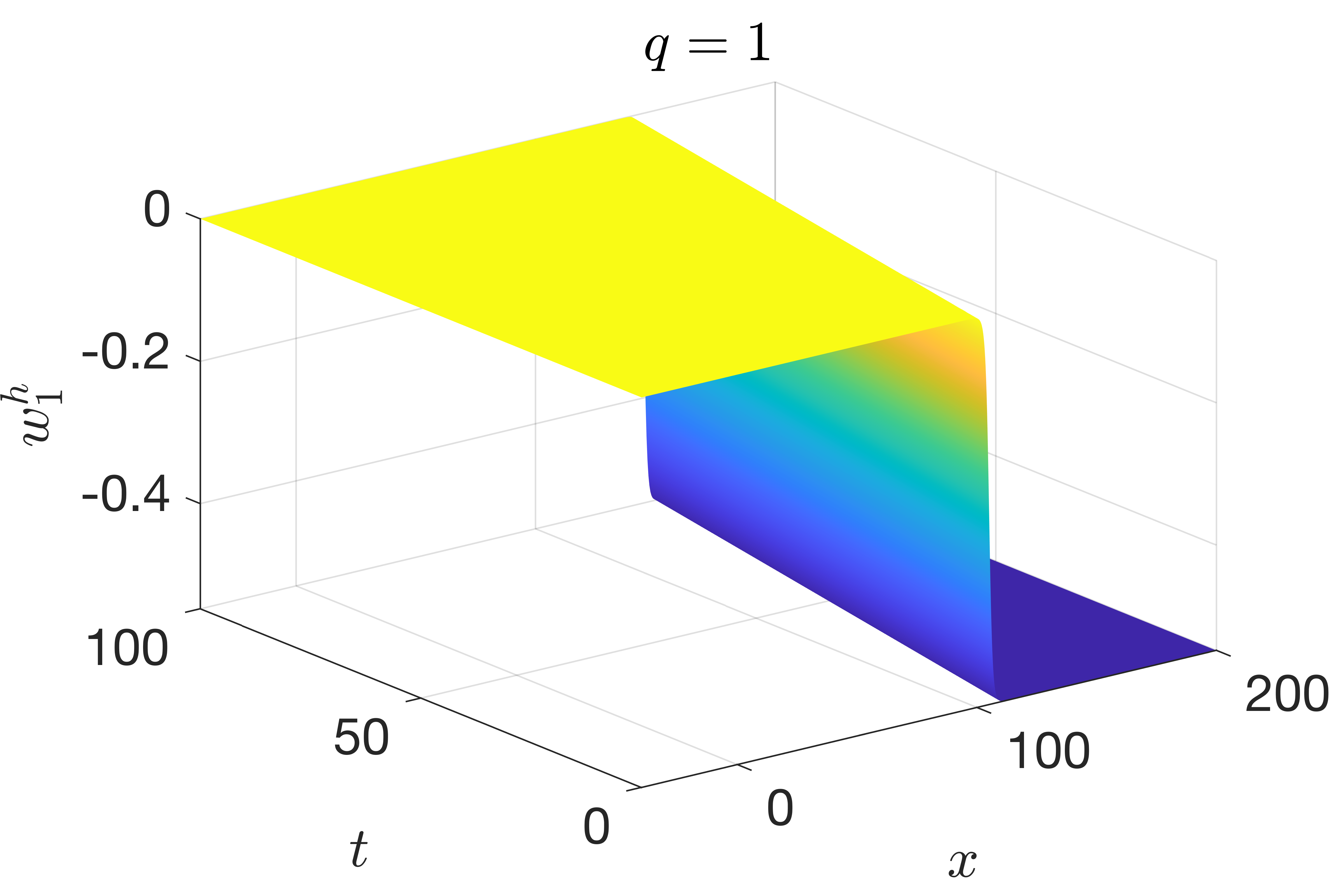}
	\includegraphics[width=0.45\linewidth,trim={.0cm .0cm .0cm .0cm},clip]{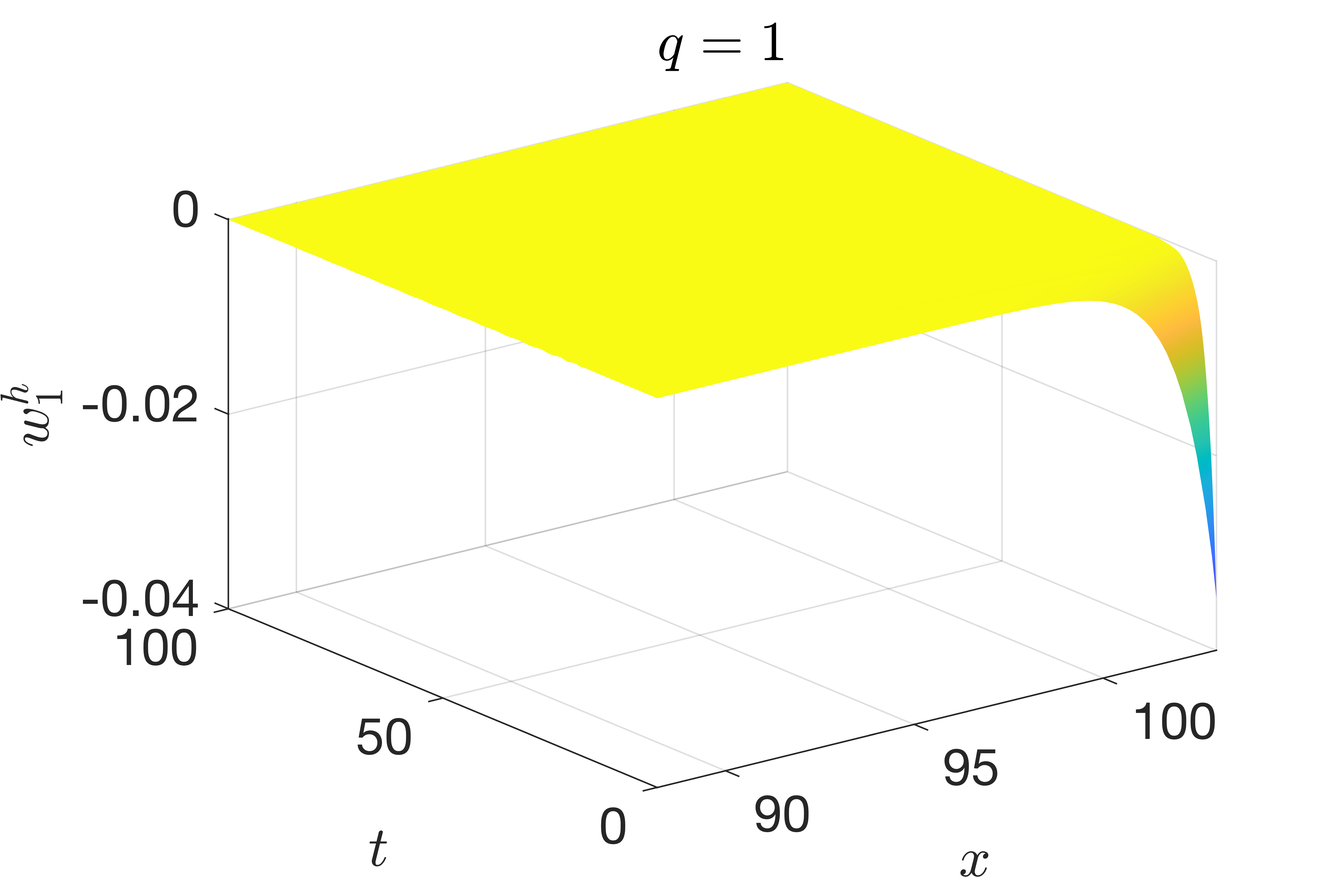}\\
 \includegraphics[width=0.45\linewidth,trim={.0cm .0cm .0cm .0cm},clip]{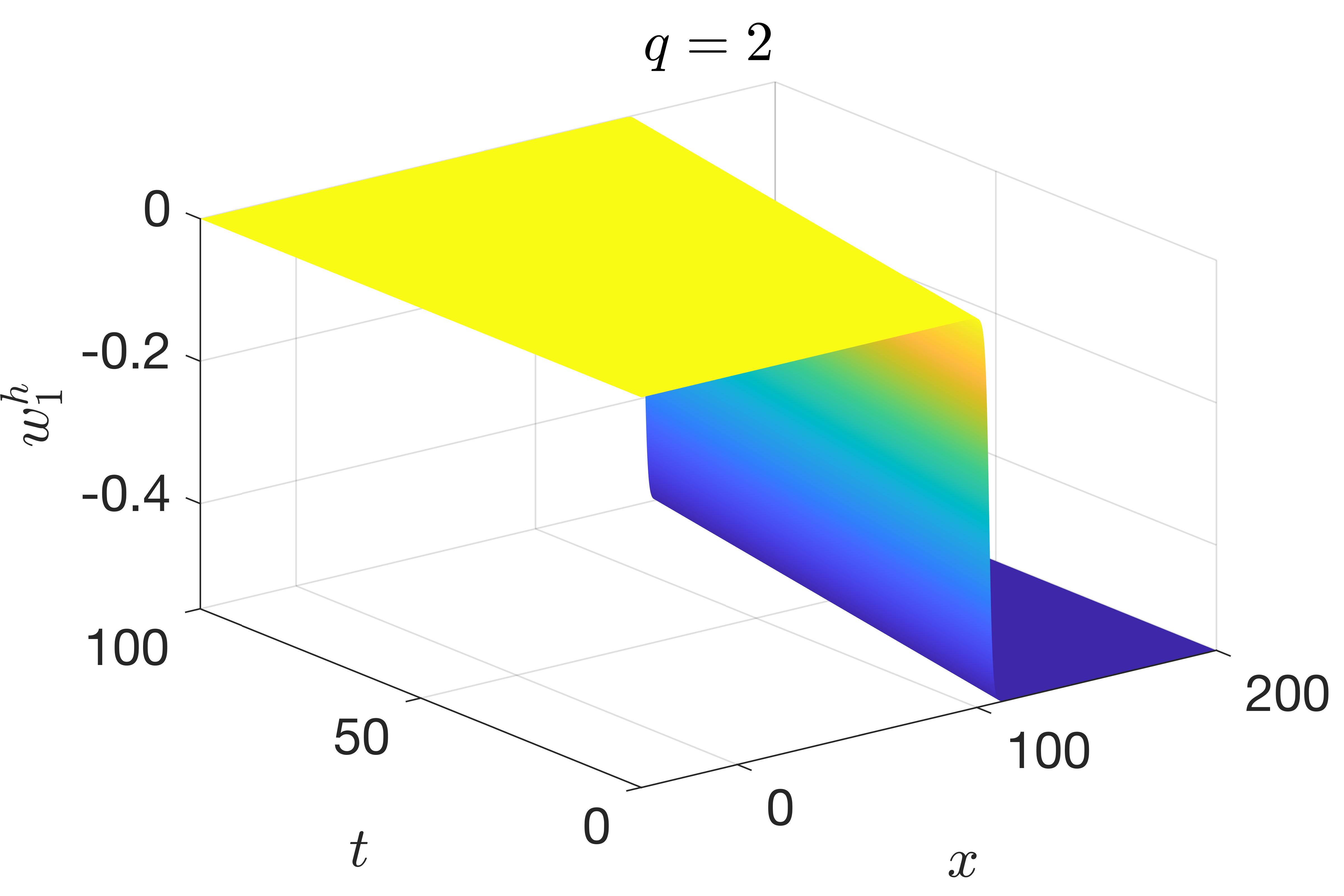}
	\includegraphics[width=0.45\linewidth,trim={.0cm .0cm .0cm .0cm},clip]{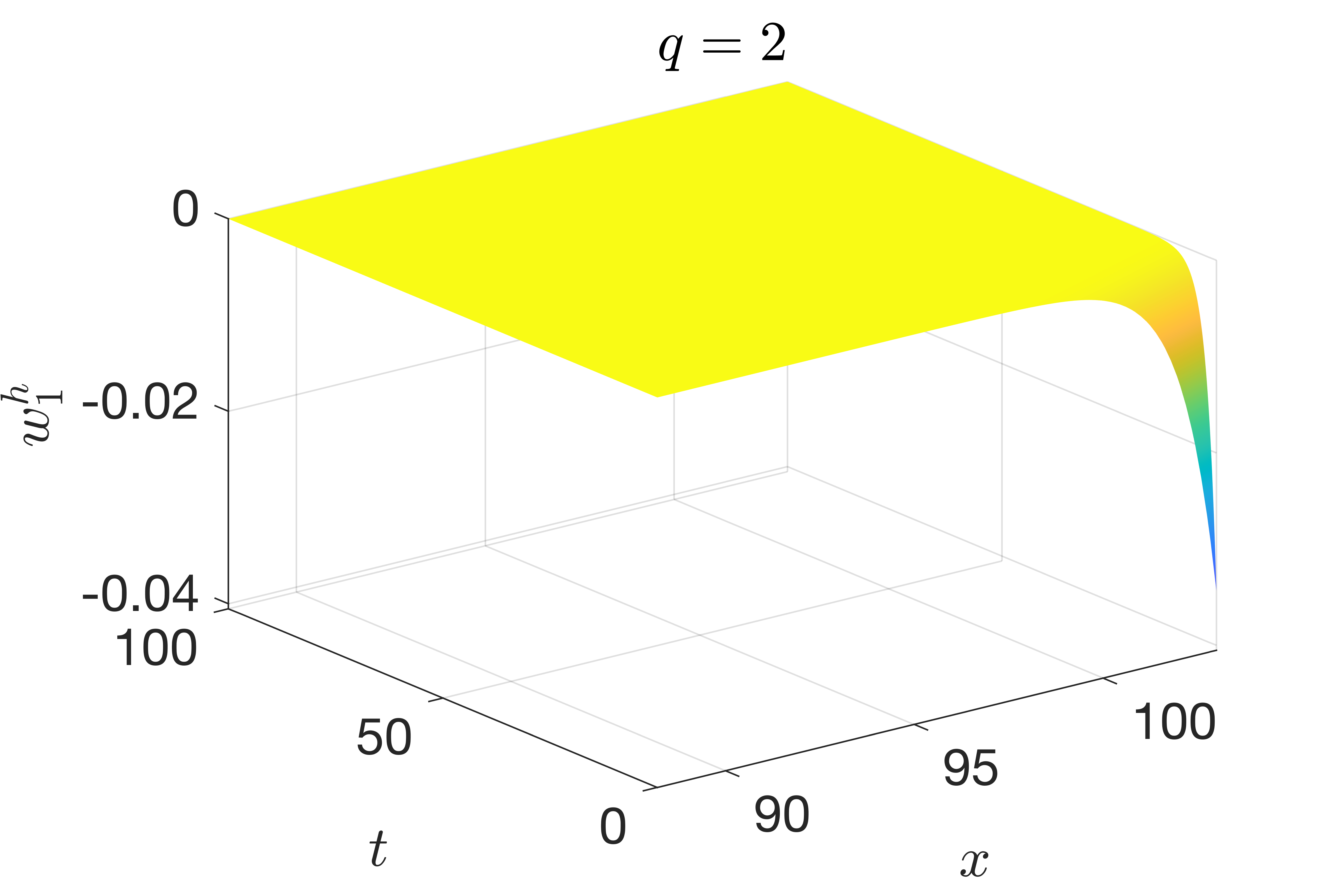}\\
 \includegraphics[width=0.45\linewidth,trim={.0cm .0cm .0cm .0cm},clip]{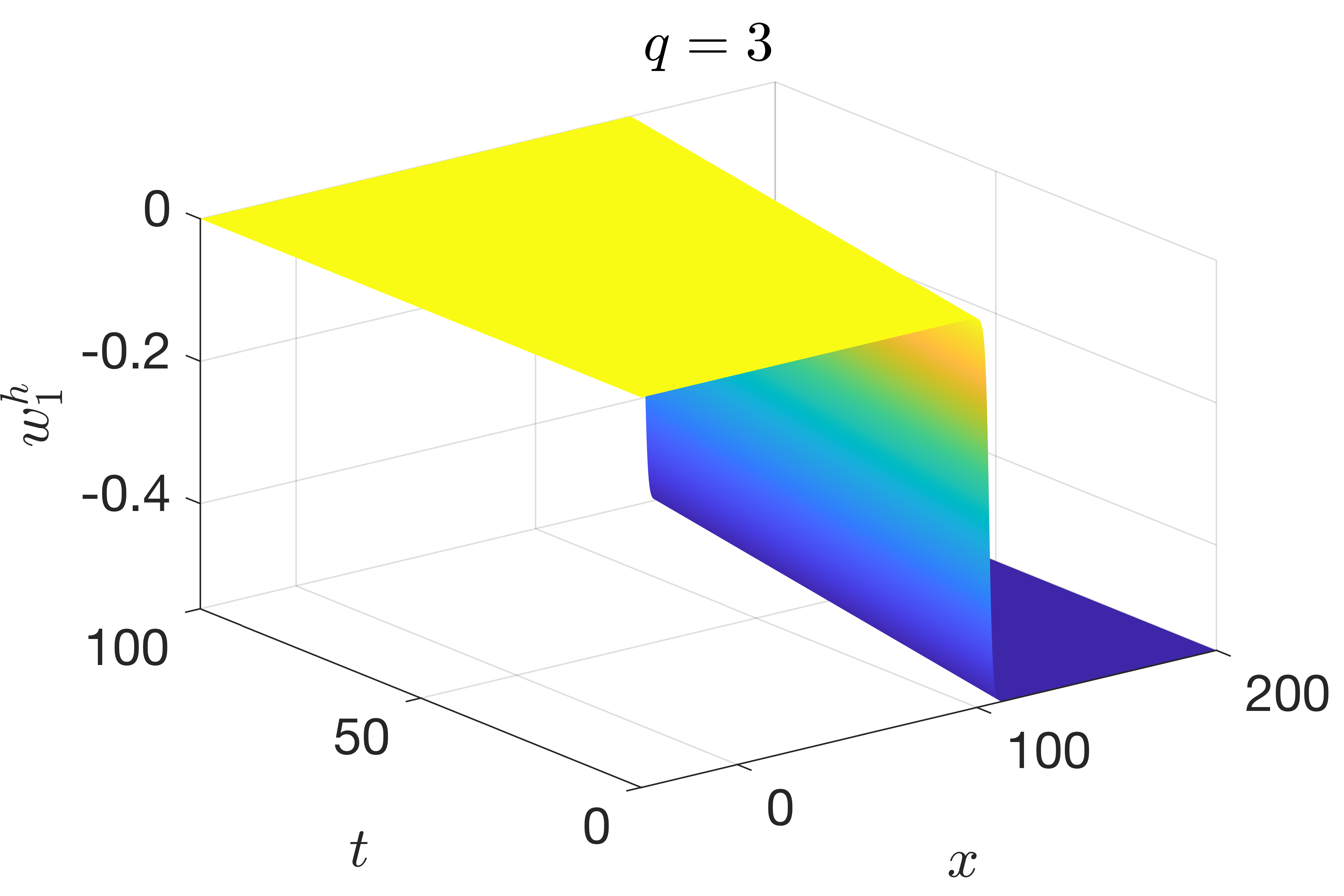}
	\includegraphics[width=0.45\linewidth,trim={.0cm .0cm .0cm .0cm},clip]{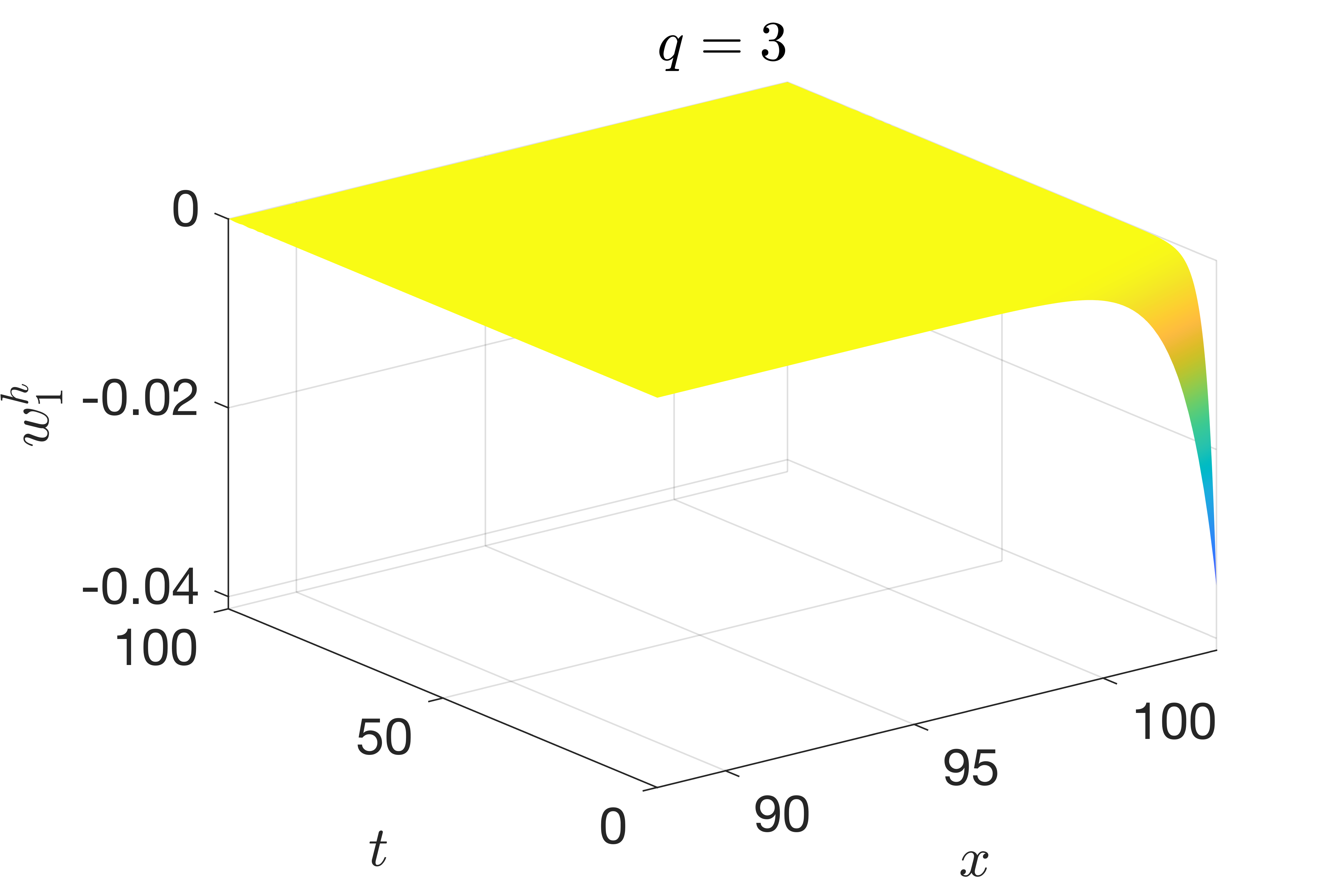}
\caption{Space-time plots of $w_1^h(x,t)$. On the left panel, from the top to the bottom are the plots for the kink soliton with $q = 0$, $q = 1$, $q = 2$, and $q = 3$ respectively; On the right panel are the zoom in plots of the transition region, $x\in (88, 103)$, of the kink soliton.}\label{fig:solutions_w1_eg3}
\end{figure}



\begin{figure}[htb!]
	\centering
	\includegraphics[width=0.45\linewidth,trim={.0cm .0cm .0cm .0cm},clip]{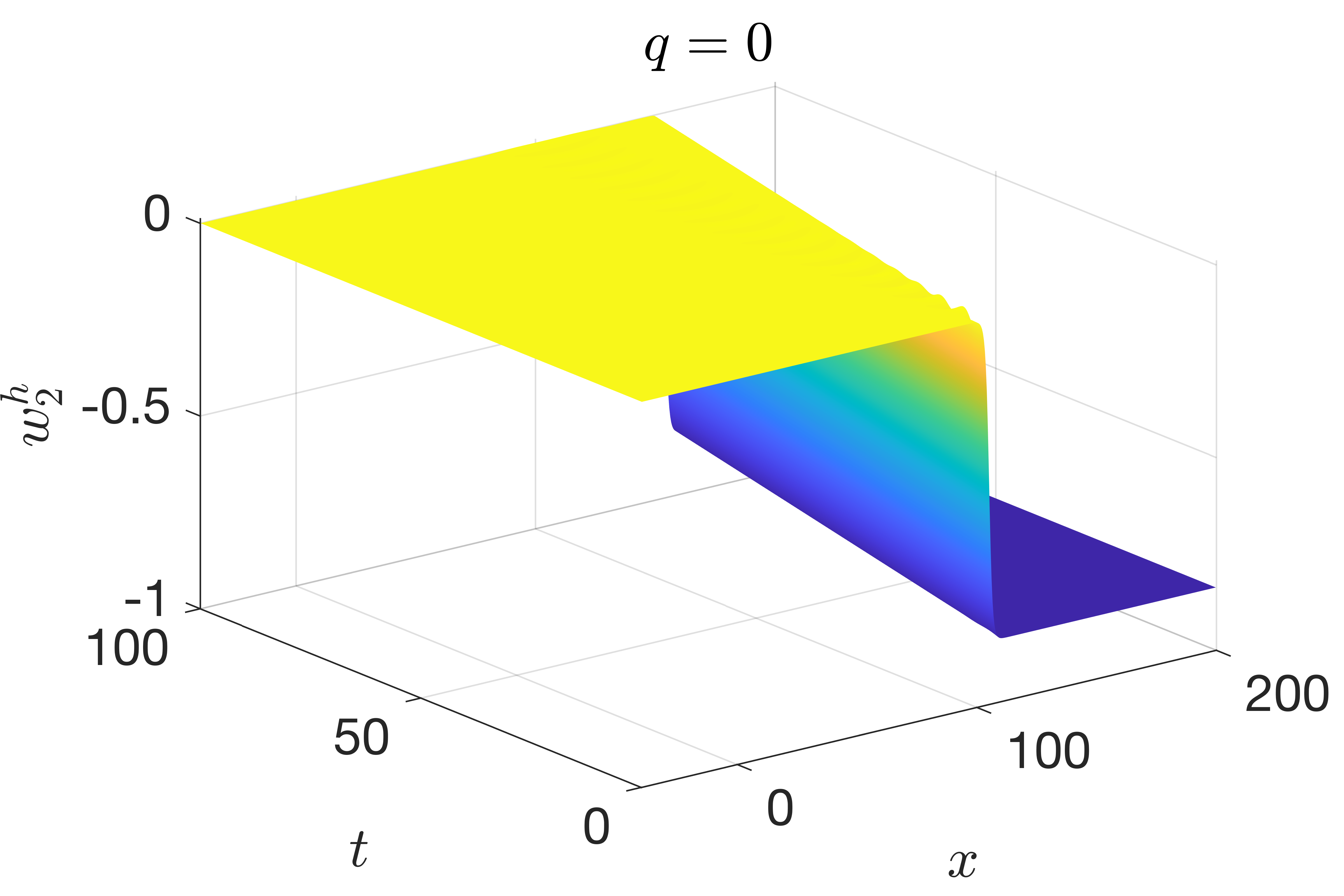}
	\includegraphics[width=0.45\linewidth,trim={.0cm .0cm .0cm .0cm},clip]{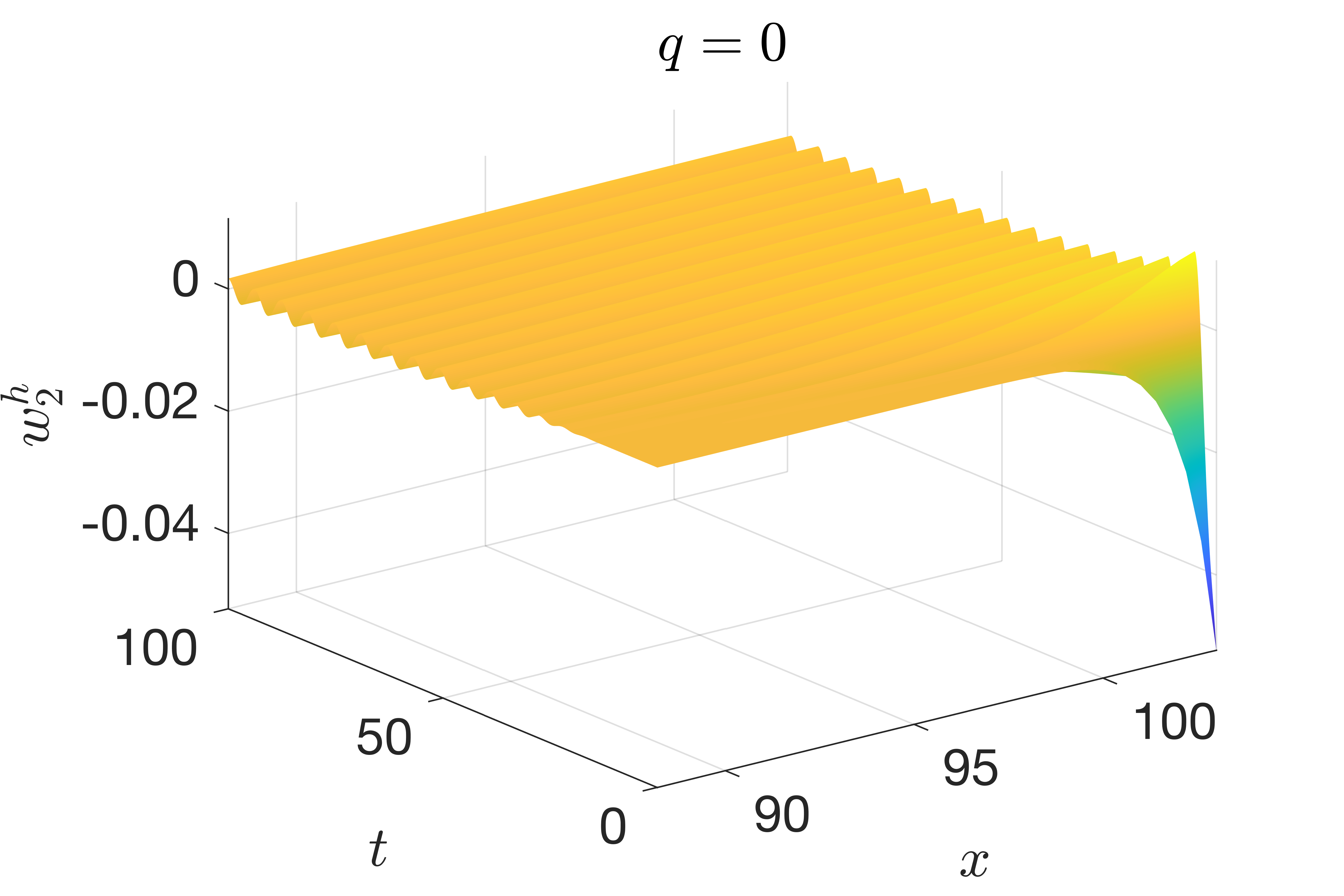}\\
	\includegraphics[width=0.45\linewidth,trim={.0cm .0cm .0cm .0cm},clip]{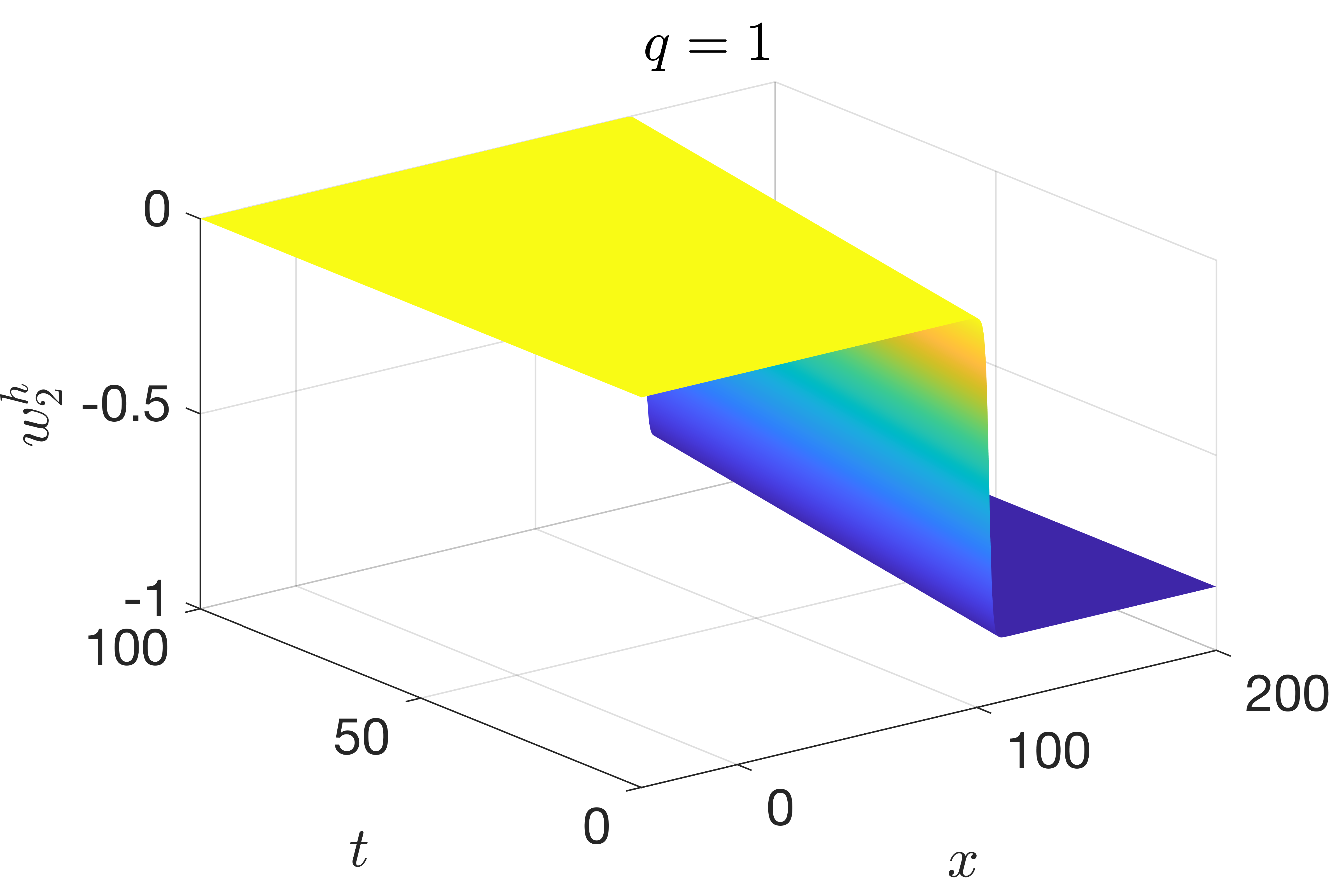}
	\includegraphics[width=0.45\linewidth,trim={.0cm .0cm .0cm .0cm},clip]{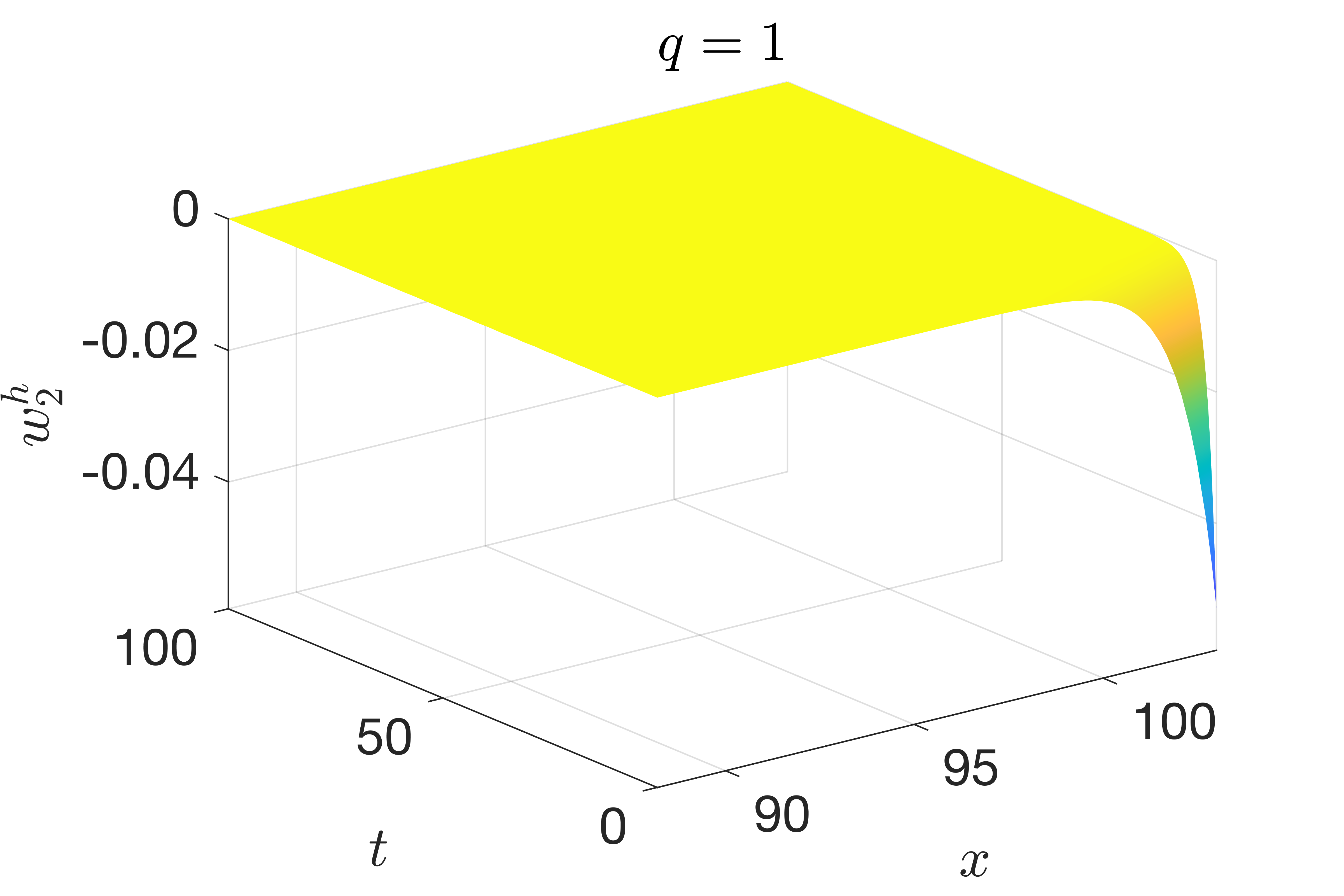}\\
 \includegraphics[width=0.45\linewidth,trim={.0cm .0cm .0cm .0cm},clip]{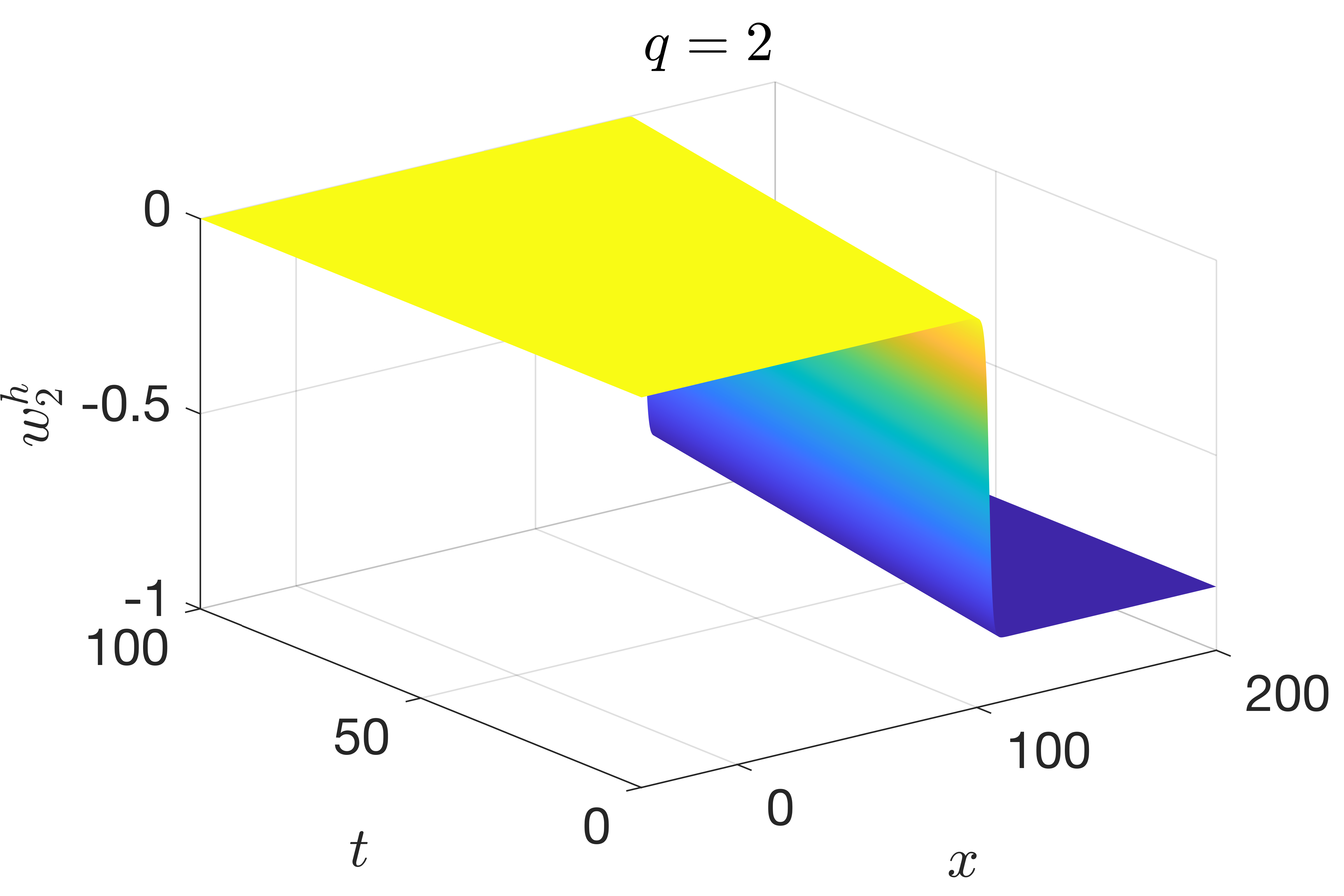}
	\includegraphics[width=0.45\linewidth,trim={.0cm .0cm .0cm .0cm},clip]{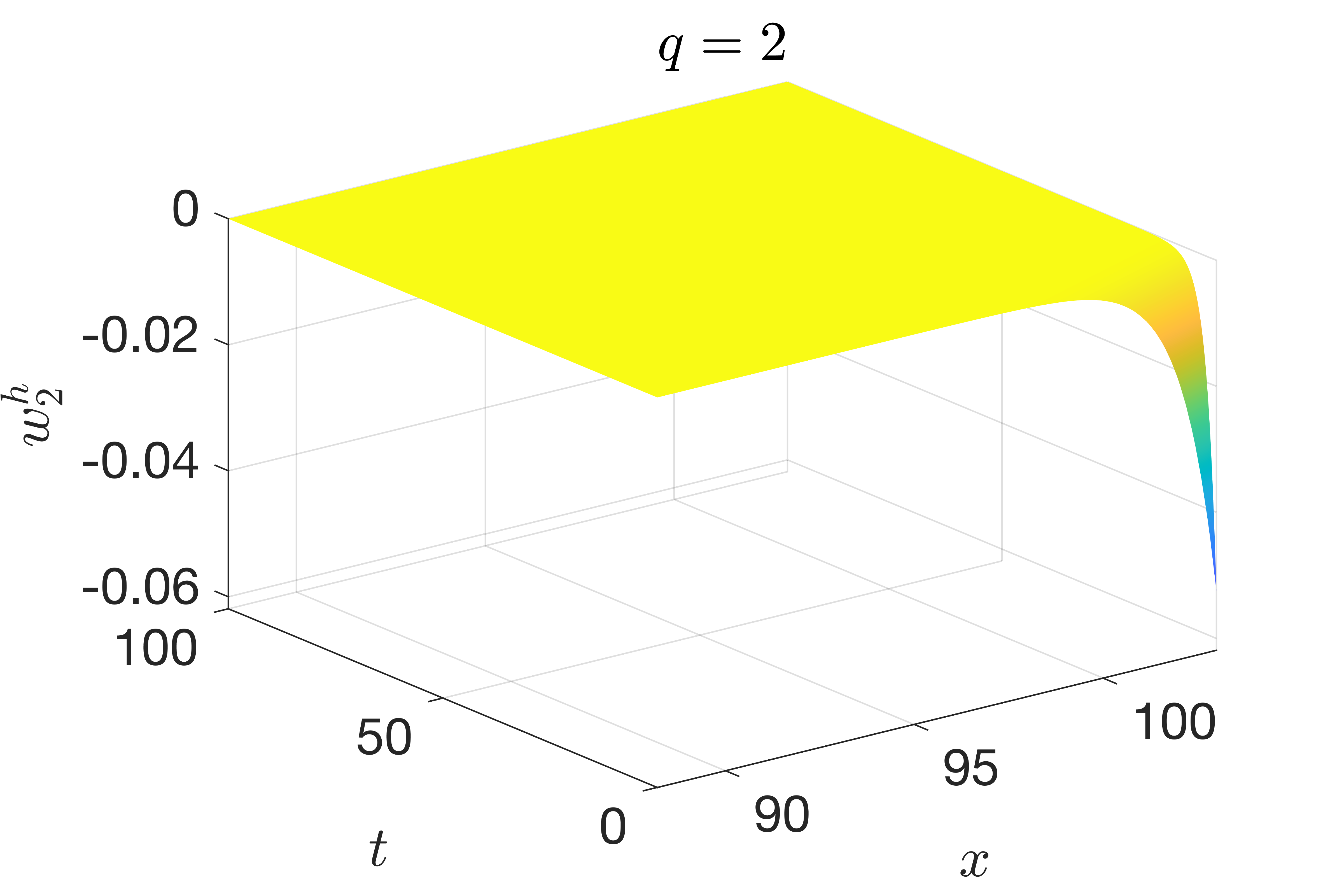}\\
 \includegraphics[width=0.45\linewidth,trim={.0cm .0cm .0cm .0cm},clip]{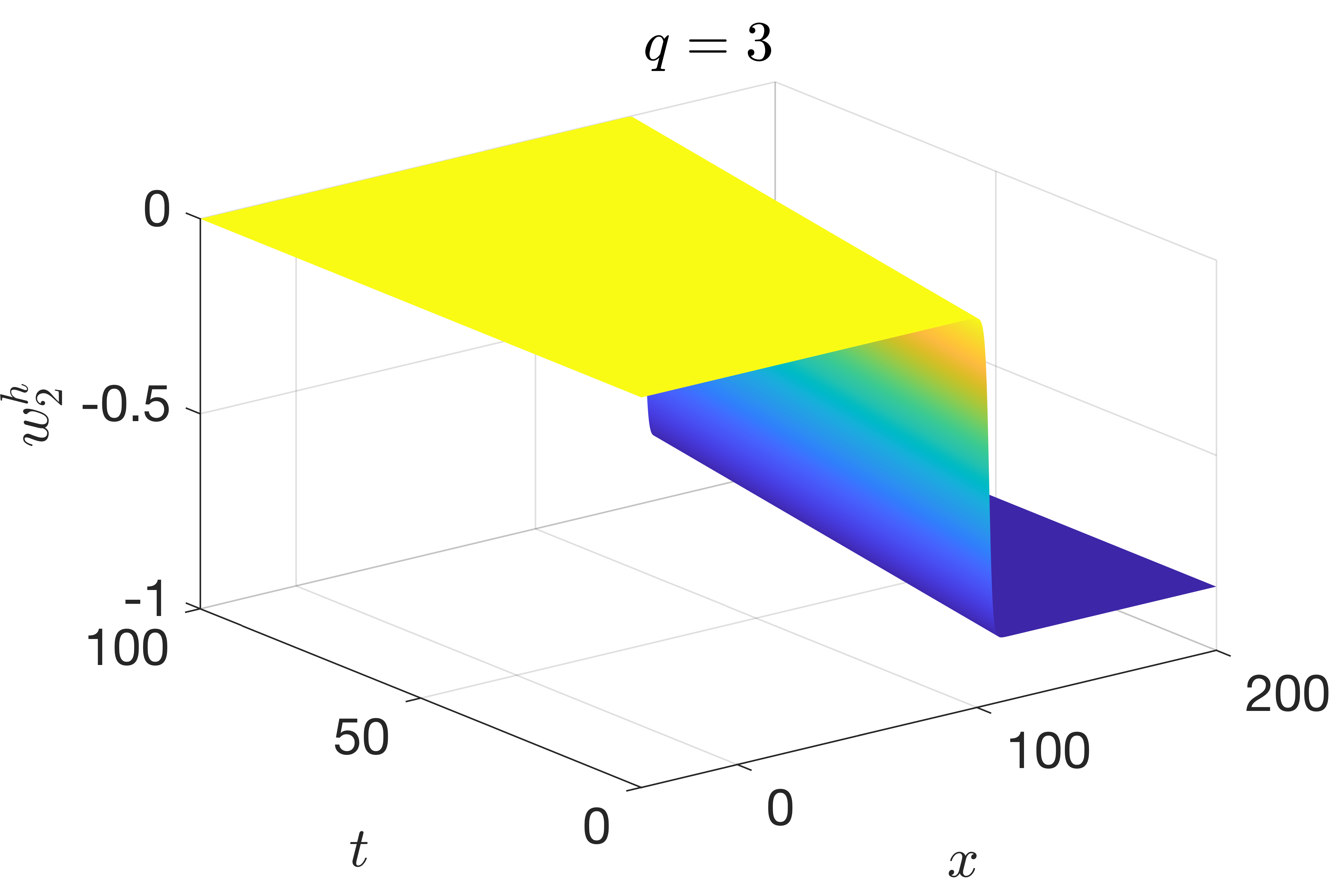}
	\includegraphics[width=0.45\linewidth,trim={.0cm .0cm .0cm .0cm},clip]{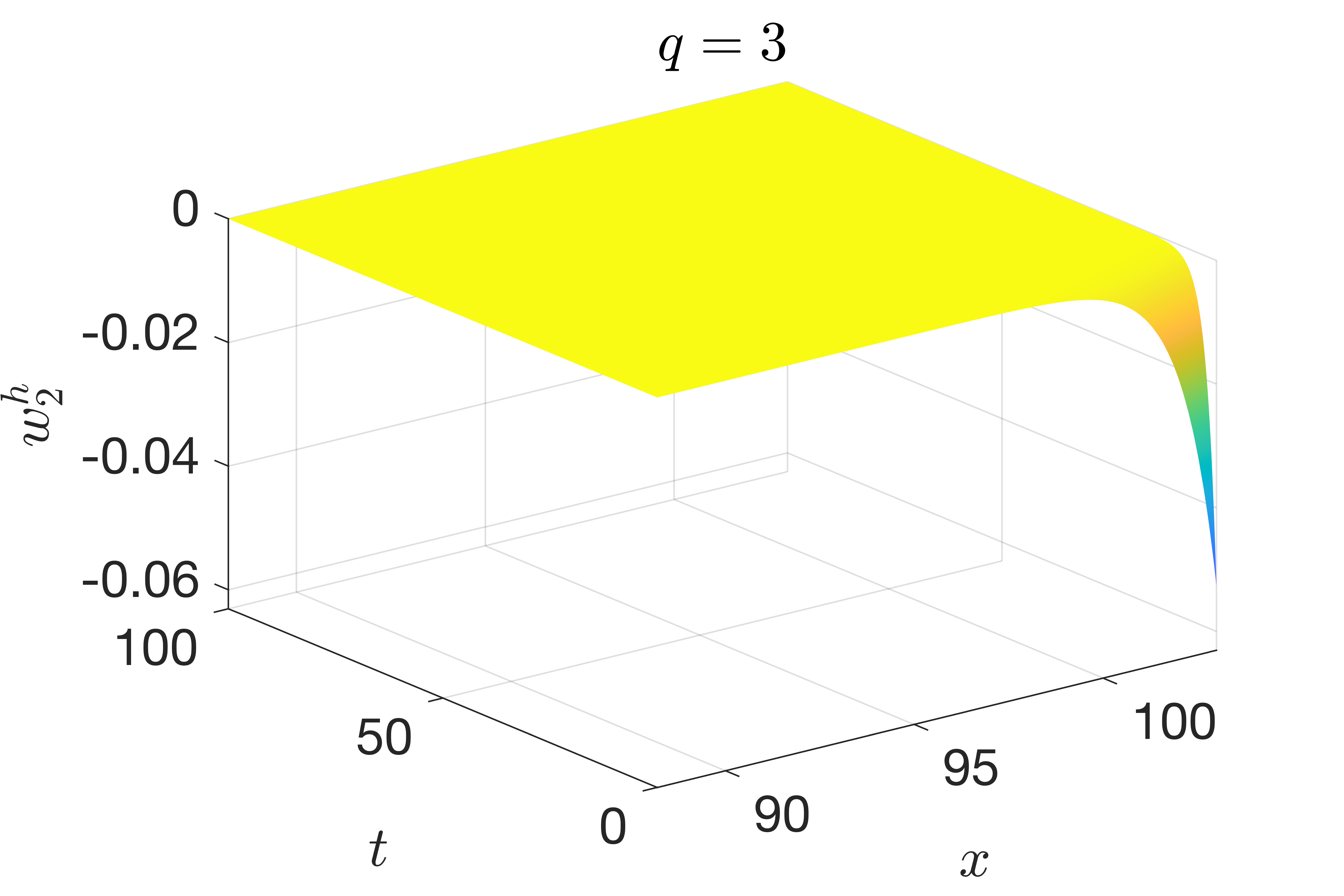}
\caption{Space-time plots of $w_2^h(x,t)$. On the left panel, from top to bottom, are the plots for the kink soliton with $q = 0$, $q = 1$, $q = 2$, and $q = 3$, respectively; On the right panel are the zoom-in plots of the transition region, $x\in (88, 103)$, of the kink soliton.}\label{fig:solutions_w2_eg3}
\end{figure}

\begin{figure}[htb!]
	\centering
	\includegraphics[width=0.45\linewidth,trim={.0cm .0cm .0cm .0cm},clip]{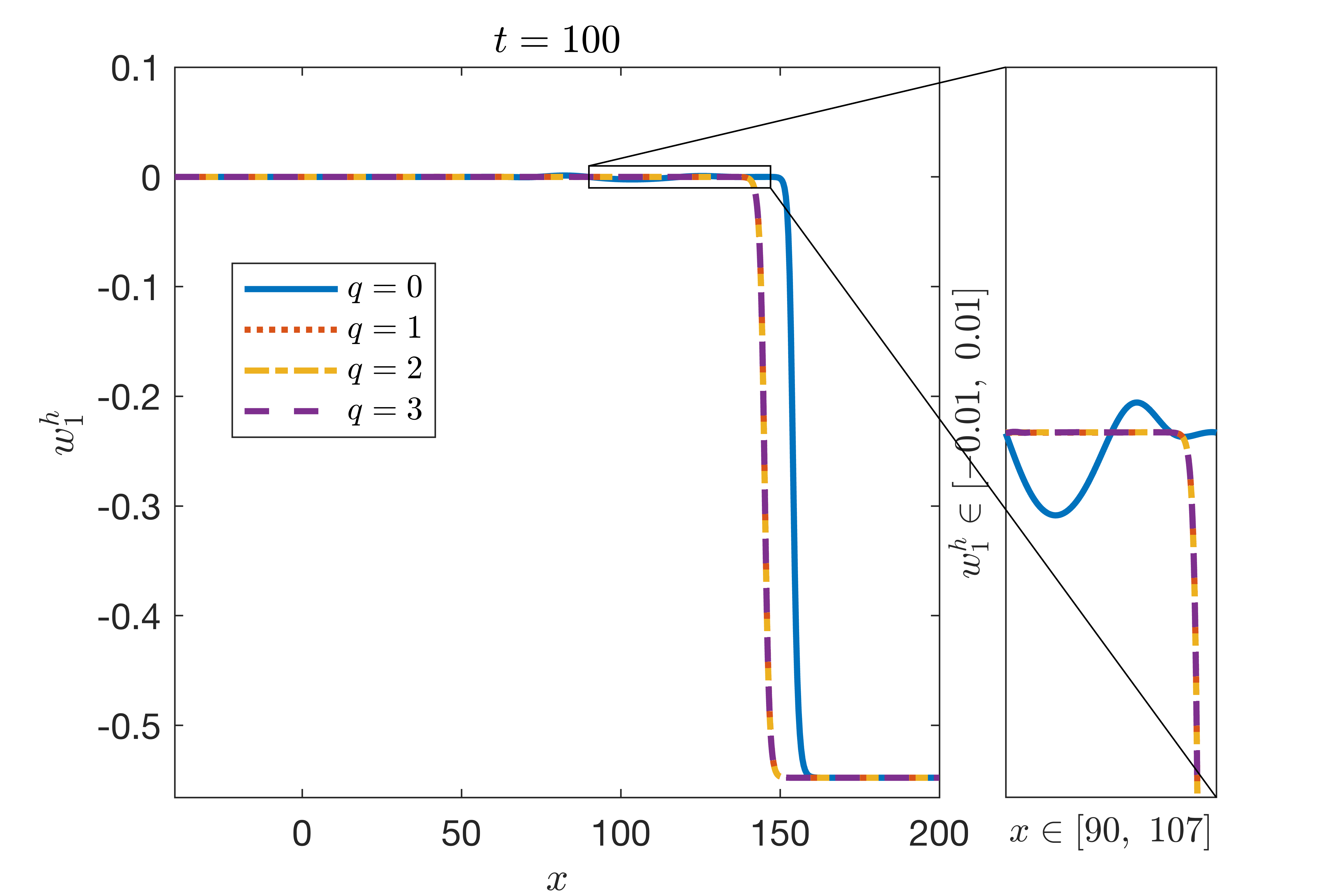}
	\includegraphics[width=0.45\linewidth,trim={.0cm .0cm .0cm .0cm},clip]{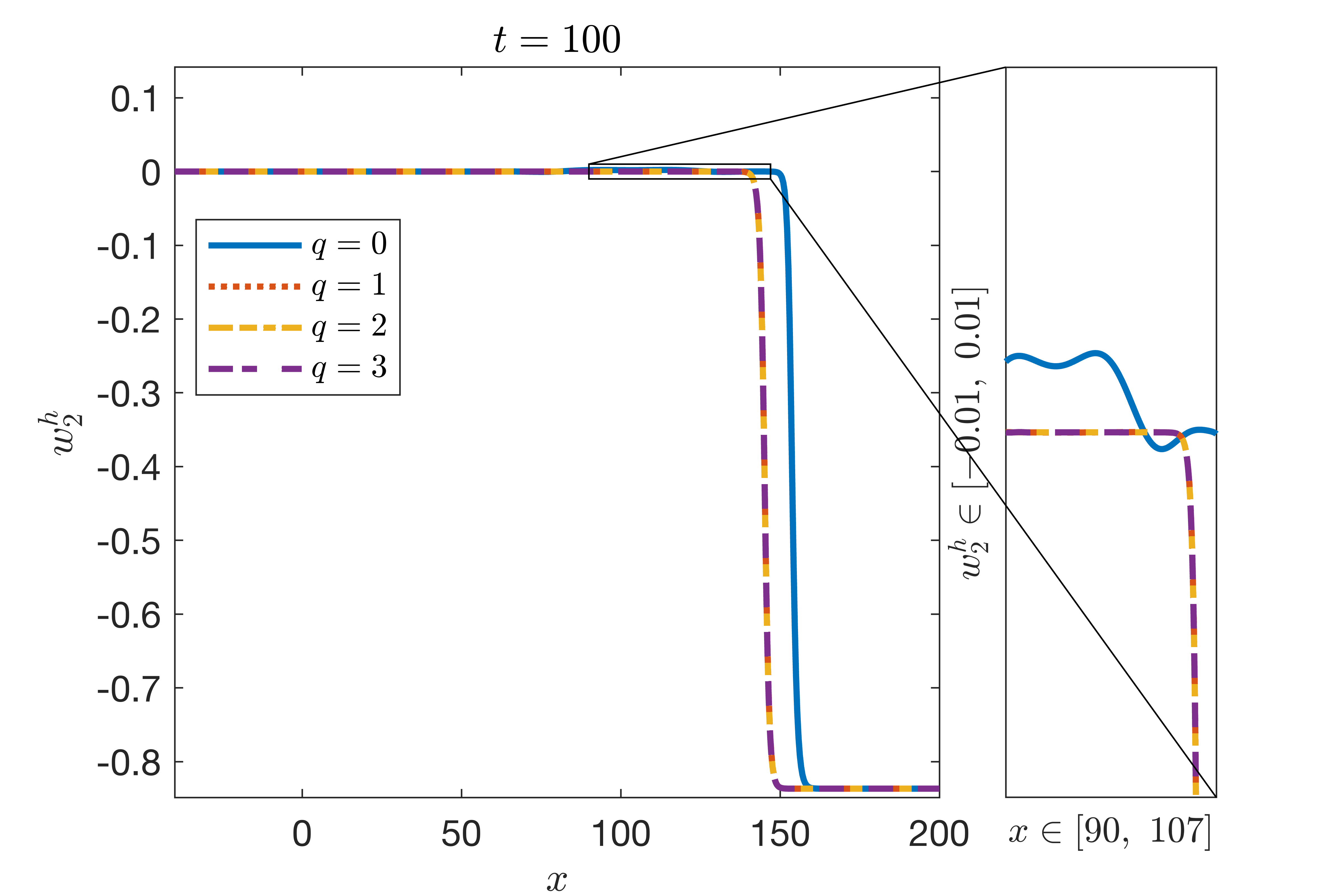}
\caption{Comparison between the kink solitons with different degrees of approximation space $q$ at the final time $t = 100$. On the left presents the results for $w_1^h$, and the right shows the results for $w_2^h$.}\label{fig:solutions_w12_eg3}
\end{figure}

Last, the numerical discrete energy $E^h(t)$, defined in (\ref{eq:discrete_energy}), trajectories of the proposed DG scheme for the problem (\ref{system_characteristic}) until the final time $t = 100$ with the initial profile shown in Figure \ref{fig:kink_initial} are presented in Figure \ref{fig:solutions_energy_eg31} with different degrees, $q$, of the approximation space. Particularly, to measure the performance of the schemes with different approximation degrees $q$, we compute the numerical discrete energy $E^h(t)$ in a moving box that has the same speed $c = 0.4$ with the kink solitons. Initially, the box is located from $x = 60$ to $x = 140$. Overall, we note that the discrete energy is dissipating as predicted since we use an energy dissipating scheme, the upwind fluxes (\ref{u_flux}), for the simulations here.  In addition, we observe that when $q = 0$, the energy dissipates a lot since there are non-physical oscillations generated as shown in Figure \ref{fig:solutions_w1_eg3} and Figure \ref{fig:solutions_w2_eg3}. When $q = 1,2,3$, though the energy is dissipating as predicted, the total dissipation is small, and the changes happen only around the third digit for $q = 1$, around $5$ digits for $q = 2$, and around $8$ digits for $q = 3$ until $T = 100$. One quick takeaway from this is that high-order schemes have better performance, and lower dispersion and dissipation errors, than their lower-order counterparts.

\begin{figure}[htb!]
	\centering
	\includegraphics[width=\linewidth,trim={.0cm .0cm .0cm .0cm},clip]{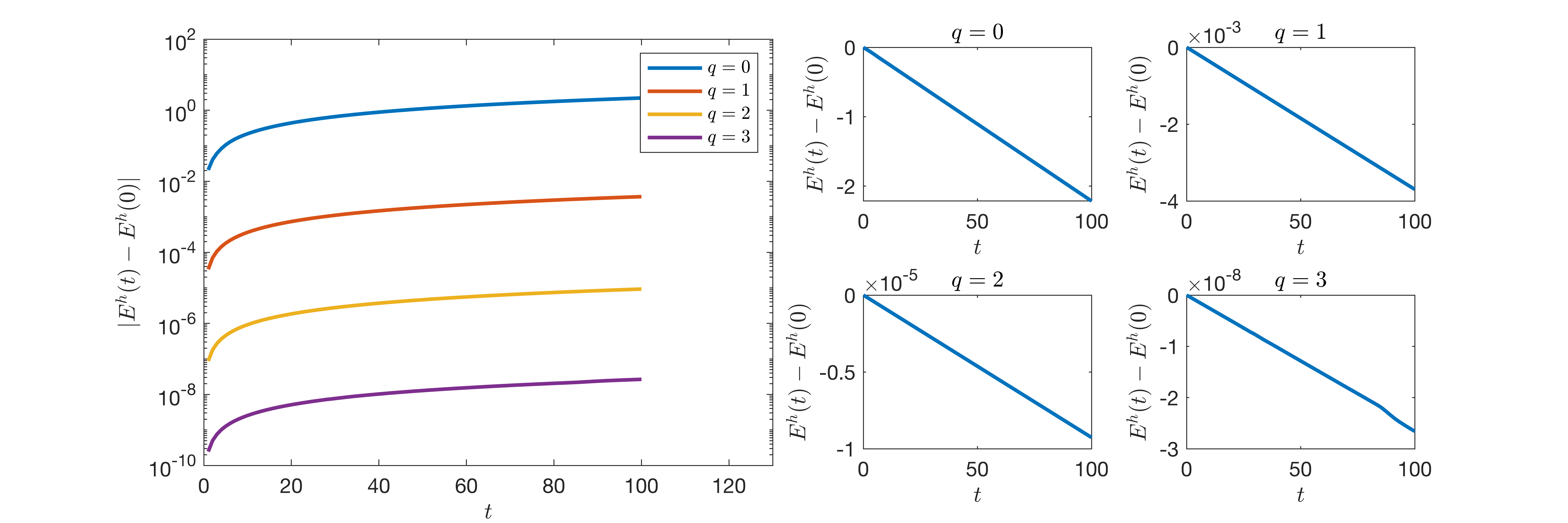}
\caption{we present the energy difference, $|E^h(t) - E^h(0)|$ on the left and $E^h(t) - E^h(0)$ on the right, of kink solitons with different degrees $q$ of approximation space in a moving box until the final time $T = 100$. The box is initially put at the region of $x\in[60, 140]$ and then moves to the right with the same speed as the kink solitons.}\label{fig:solutions_energy_eg31}
\end{figure}

\section{Conclusion}\label{sec:conclusion}
In this work, a DG method is developed and analyzed for a first-order semi-linear hyperbolic system modeling a dimer lattice. The stability of the semi-discrete DG formulation is proved and demonstrated for general mesh-independent numerical fluxes; moreover, we also show optimal $L^2$-error estimates for the upwind fluxes with periodic boundary conditions. Our numerical experiments verify the theoretical findings. In addition, when using mixed fluxes, we also numerically observed the optimal convergence in the $L^2$ norm; for the central fluxes, we observe a sub-optimal convergence for polynomials of odd degrees, but a super-convergence for polynomials of even degrees. This phenomenon is common in literature when central fluxes are used; indeed, Bassi et al. \cite{bassi1997high} reported an optimal convergence order $q+1$ for even values of $q$ and a suboptimal convergence order $q$ for the odd value of $q$ for a purely elliptic problem with uniform grids. Cockburn and Shu \cite{cockburn1998local} also presented the same results for a purely parabolic problem; see their Table 5. In addition, Delfour et al. \cite{delfour1981discontinuous} reported this phenomenon in the framework of ODEs, see their Table A. Applications to kink solitons have demonstrated the effectiveness of high-order numerical schemes. Our main target for future work will be extensions to problems in multi-dimensional space. This will enable applications to a wider variety of problems of physical interest. We will also devote our effort to the development of an energy-conserving fully-discrete DG scheme for the proposed problem, which will be more favorable when simulating waves with long time integration since they can maintain the phase and shape of the waves accurately.

\section*{Funding}
This work is partially supported by US National Science Foundation grant DMS-2012562 (QD), DMS-1937254 (QD and MW), DMS-1620418 (MW and HL), DMS-1908657 (MW and HL), and Simons Foundation Math + X Investigator Award \#376319 (MW).

\bibliography{ref}
\bibliographystyle{plain}

\end{document}